\title{$n$-gr-Coherent rings and Gorenstein graded modules}
\date{}
\author{}
\newtheorem{thm}{Theorem}[section]
 \newtheorem{cor}[thm]{Corollary}
 \newtheorem{lem}[thm]{Lemma}
 \newtheorem{prop}[thm]{Proposition}
 \newtheorem{Def}[thm]{Definition}
\newtheorem{rem}[thm]{Remark}
 \newtheorem{ex}[thm]{Example}
\newcommand{\X}{\rm \mathscr{X}}
\newcommand{\Y}{\rm \mathscr{Y}}
\def\Ext{{\rm Ext}}
\def\Tor{{\rm Tor}}
\def\Hom{{\rm Hom}}
\def\Ker{{\rm Ker}}
\newcommand{\F}{{\cal F}}
\newcommand{\M}{{\cal M}}
\def\pd{{\rm pd}}
\def\fd{{\rm fd}}
\def\id{{\rm id}}
\begin{document}

\thispagestyle{empty}

\maketitle \vspace*{-1.5cm}
\begin{center}{\large\bf  Mostafa Amini$^{1,a}$, Driss Bennis$^{2,b}$ and Soumia Mamdouhi$^{2,c}$}
\bigskip

\small{1. Department of Mathematics, Faculty of Sciences, Payame Noor University, Tehran, Iran.\\ 
2. Department of Mathematics, Faculty of Sciences, Mohammed V University in Rabat, Rabat, Morocco.\\
$\mathbf{a.}$ amini.pnu1356@gmail.com\\
$\mathbf{b.}$ driss.bennis@um5.ac.ma; driss$\_$bennis@hotmail.com\\
$\mathbf{c.}$ soumiamamdouhi@yahoo.fr
}

\end{center}

\bigskip

\noindent{\large\bf Abstract.} Let $R$ be a graded ring and $n\geq1$ an integer. In this paper, we introduce and study  the notions of Gorenstein $n$-FP-gr-injective and Gorenstein $n$-gr-flat modules  by using the notion of special finitely presented graded modules.   On $n$-gr-coherent rings, we investigate
the relationships between Gorenstein $n$-FP-gr-injective and Gorenstein $n$-gr-flat modules. Among other results,
we prove that any graded module in $R$-gr (resp. gr-$R$) admits Gorenstein $n$-FP-gr-injective (resp. Gorenstein $n$-gr-flat) cover and preenvelope.
\bigskip

\small{\noindent{\bf Keywords:}    }
$n$-gr-coherent ring; Gorenstein $n$-FP-gr-injective modules; Gorenstein $n$-gr-flat modules, covers, (pre)envelopes.\medskip

\small{\noindent{\bf 2010 Mathematics Subject Classification.}  16E30, 16D40,16D50, 16W50}
\bigskip\bigskip
%

\section{Introduction}

\ \ In 1990s, Enochs, Jenda and
Torrecillas, introduced the concepts
of Gorenstein injective and Gorenstein flat modules over arbitrary
rings \cite{EJ, EO}.  In 2008, Mao
and Ding introduced a special case of the Gorenstein
injective modules and they called Gorenstein FP-injective modules, which  renamed by Gillespie  by Ding injective \cite{JG}.  These Gorenstein FP-injective modules are stronger than the Gorenstein injective modules, and  in general an FP-injective module is not necessarily
Gorenstein FP-injective \cite[ Proposition 2.7]{L.X}. For this reason, Gao and Wang introduced and studied in \cite{Z.G} another  notion called Gorenstein FP-injective
modules which is  weaker than the usual Gorenstein injective modules. Furthermore,  all FP-injective modules are  in the class of Gorenstein FP-injective modules (see Section 2 for the definitions of these notions).

In this paper we deal with the graded aspect of some   extensions of these notions.  As it is known,  graded rings and modules are a classical notions in algebra  which   build  their  values and strengths from their connection with   algebraic geometry (see for instance \cite{CNA, CNS, CNF}).   Several authors have investegated the graded aspect of some notions in relative homological algerbra. For example,  Asensio,  L\'{o}pez Ramos and   Torrecillas in \cite{MJO, MJL} introduced the notions of Gorenstein gr-projective, gr-injective and gr-flat modules. In the recent years, the Gorenstein homological theory for graded rings have become an important area of research (see for instance \cite{MJM, Z.P}).  The  notions of FP-gr-injective  modules was introduced  in \cite{MJP}, and in  \cite{ZL} homological behavior of the FP-gr-injective  modules on gr-coherent rings were investigated. Along the same lines, it is natural to generalize the notion of  ``FP-gr-injective  modules and gr-flat modules'' to ``$n$-FP-gr-injective modules and $n$-gr-flat modules''. This done by Zhao, Gao and Huang in \cite{NG} basing on the notion  of   special finitely
presented graded modules which they defined  via projective resolutions of $n$-presented graded modules. Recently,
 in 2017,  Mao via FP-gr-injective modules gave a definition of Ding gr-injective modules \cite{L.X2}. Under this definition these Ding gr-injective modules are stronger than the Gorenstein gr-injective modules, and an FP-gr-injective module is not necessarily Ding gr-injective in general \cite[ Corollary 3.7]{L.X2}. So, for any $n\geq1$,  we study the consequences of extending the notion of $n$-FP-gr-injective and $n$-gr-flat modules to that of Gorenstein $n$-FP-gr-injective and Gorenstein $n$-gr-flat modules, respectively. Then, in this paper, for any $n\geq1$ by using $n$-FP-gr-injective modules and $n$-gr-flat modules, we introduce  a concept of Gorenstein $n$-FP-gr-injective  and Gorenstein $n$-gr-flat modules, and under this definition,  Gorenstein $n$-FP-gr-injective and Gorenstein $n$-gr-flat modules are weaker than the usual Gorenstein gr-injective and Gorenstein gr-flat modules, respectively. Also, for any $n\geq 1$, all gr-injective, $n$-FP-gr-injective modules and gr-flat, $n$-gr-flat modules are Gorenstein $n$-FP-gr-injective and Gorenstein $n$-gr-flat, respectively, and in general, Gorenstein $n$-FP-gr-injective and Gorenstein $n$-gr-flat $R$-modules need not be $n$-FP-gr-injective and $n$-gr-flat, unless in certain cases, see Proposition \ref{2.2a}.

The paper is organized as follows:

In Sec. 2, some fundamental concepts and some preliminary results are stated.

In Sec. 3, we introduce Gorenstein $n$-FP-gr-injective and  Gorenstein $n$-gr-flat modules for an integer $n\geq 1$ and then we give some characterizations of these modules.  Among other results, we prove that, for an exact sequence $0\rightarrow A \rightarrow B \rightarrow C\rightarrow 0$ of graded left $R$-modules, if $A$ and $B$ are Gorenstein $n$-FP-gr-injective, then $C$ is Gorenstein $n$-FP-gr-injective if and only if every $n$-presented module in $R$-gr with gr-${\rm pd}_{R}(U)<\infty$ is $(n+1)$-presented, and it follows that $(^{\bot}\mathcal{G}_{gr-\mathcal{FI}_n}, \mathcal{G}_{gr-\mathcal{FI}_n})$ is a hereditary cotorsion pair if and only if every $n$-presented module in $R$-gr with gr-${\rm pd}_{R}(U)<\infty$ is $(n+1)$-presented and every $M\in(^{\bot}\mathcal{G}_{gr-\mathcal{FI}_n})^{\bot}$ has an exact left ($gr$-$\mathcal{FI}_n$)-resolution, where $\mathcal{G}_{gr-\mathcal{FI}_n}$  and $gr$-$\mathcal{FI}_n$ denote the classes of Gorenstein $n$-FP-gr-injective and $n$-FP-gr-injective modules in $R$-gr,
 respectively.
 Also,  for a graded left (resp. right) $R$-module $M$ over a left $n$-gr-coherent ring $R$: $M$ is Gorenstein $n$-FP-gr-injective (resp. Gorenstein $n$-gr-flat) if and only if $M^{*}$ is Gorenstein $n$-gr-flat (resp. Gorenstein $n$-FP-gr-injective). Furthermore, the class of Gorenstein $n$-FP-gr-injective (resp. Gorenstein $n$-gr-flat) modules are  closed under direct limits (resp. direct products).  In this section, examples are given in order to show that Gorenstein $m$-FP-gr-injectivity (resp. Gorenstein $m$-gr-flatness) does not imply Gorenstein $n$-FP-gr-injectivity (resp. Gorenstein $n$-gr-flatness) for any $m>n$. Also, examples are given   showing that Gorenstein $n$-FP-gr-injectivity does not imply gr-injectivity.  In this paper, $gr$-$\mathcal{I}$ denote the classes of gr-injective modules in $R$-gr and  $gr$-$\mathcal{F}$, $gr$-$\mathcal{F}_n$ and $\mathcal{G}_{gr-\mathcal{F}_n}$ denote the classes
of gr-flat, $n$-gr-flat and  Gorenstein $n$-gr-flat  modules in gr-$R$, respectively.

In Sec. 4, it is shown that the class of Gorenstein $n$-FP-gr-injective and  Gorenstein $n$-gr-flat modules are covering and  preenveloping on $n$-gr-coherent rings.  We also establish some equivalent characterizations of
$n$-gr-coherent rings in terms of Gorenstein $n$-FP-gr-injective and Gorenstein $n$-gr-flat modules.


\section{Preliminaries}
\ \ Throughout this paper, all rings considered are associative with identity element
and the $R$-modules are unital. By $R$-Mod and Mod-$R$ we will denote the    category  of all left $R$-modules and  right $R$-modules, respectively.

In this section, some fundamental concepts and notations are stated.
 
  Let $n$ be a non-negative integer and
$M$ a left $R$-module. Then,
 $M$  is said to be {\it Gorenstein injective} (resp. {\it Gorenstein flat})
\cite{ EJ, EO} if there
is an exact sequence $$
\cdots\longrightarrow I_1  \longrightarrow I_{0} \longrightarrow I^0\longrightarrow I^1\longrightarrow\cdots$$ of injective (resp. flat) left $R$-modules
with $M= {\rm ker}(I^0\rightarrow
I^1)$ such that ${\Hom}_{R}(U,-)$ (resp.  $U\otimes_{R}-$) leaves the sequence exact whenever
$U$ is an injective left (resp. right) $R$-module.

$M$ is said to be {\it $n$-presented} \cite{Costa, DKM} if there is an exact sequence
$$
F_n\longrightarrow F_{n-1} \longrightarrow\cdots \longrightarrow F_1\longrightarrow F_0\longrightarrow U\longrightarrow 0$$ of left $R$-modules, where each $F_i$ is finitely generated free, and a ring $R$ is called left {\it $n$-coherent}, if every
$n$-presented left $R$-module is $(n + 1)$-presented.
$M$ is said to be {\it $n$-FP-injective} \cite{CD} if ${\Ext}_{R}^{n}(U,M)=0$
 for any $n$-presented left $R$-module $U$. In case $n=1$, $n$-FP-injective modules are nothing but the well-known  FP-injective modules. A right module
$N$ is called {\it $n$-flat} if ${\rm Tor}_{n}^{R}(N,U)=0$
 for any $n$-presented left $R$-module $U$.
 
$M$ is said to be {\it Gorenstein FP-injective} \cite{L.X} if there is an exact sequence $$
{\mathbf{E}}=\cdots\longrightarrow E_1 \longrightarrow E_{0} \longrightarrow E^0\longrightarrow E^1 \longrightarrow\cdots$$ of injective  left modules with $M=\ker(E^0\rightarrow E^1)$ such that ${\Hom}_{R}(U,{\mathbf{E}})$ is an     exact sequence whenever $U$  is an FP-injective left $R$-module. Then, in \cite{Z.G},  Gao and Wang introduced  other concept of Gorenstein FP-injective modules as follows: $M$ is said to be {\it Gorenstein FP-injective} \cite{Z.G} if there is an exact sequence $$
{\mathbf{E}}=\cdots\longrightarrow E_1 \longrightarrow E_{0} \longrightarrow E^0\longrightarrow E^1 \longrightarrow\cdots$$ of FP-injective left modules with $M=\ker(E^0\rightarrow E^1)$ such that ${\Hom}_{R}(P,{\mathbf{E}})$ is an  exact  sequence   whenever $P$  is a finitely presented module with ${\rm pd}_R(P)<\infty.$

  Let $G$ be a multiplicative group with neutral element $e$.
A graded ring $R$ is a ring with identity $1$ together with a direct decomposition
$R =\bigoplus_{\sigma\in G} R_{\sigma}$ (as additive subgroups) such that $R_{\sigma}R_{\tau}\subseteq R_{\sigma\tau}$ for all $\sigma , \tau\in G$.
Thus, $Re$ is a subring of $R$, $1\in Re$  and $R_{\sigma}$ is an $Re$-bimodule for every $\sigma\in G$.
A {\it graded} left (resp. right) $R$-module is a left (resp. right) $R$-module $M$ endowed with an internal direct sum
decomposition $M =\bigoplus_{\sigma\in G} M_{\sigma}$, where each $M_{\sigma}$ is a subgroup of the additive group
of $M$ such that $R_{\sigma}M_{\tau}\subseteq M_{\sigma\tau}$ for all $\sigma , \tau\in G$. For any graded left $R$-modules $M$
and $N$, set ${\rm Hom}_{R-{\rm gr}}(M,N) := \{f : M\rightarrow N  \mid$ $f \  is \  R$-linear  and   $f(M_{\sigma})\subseteq N_{\sigma}$ for   any $\sigma\in G\},$
which is the group of all morphisms from $M$ to $N$ in the class $R$-gr of all graded
left $R$-modules (gr-$R$ will denote the class of all graded right $R$-modules). It
is well known that $R$-gr is a Grothendieck category. An $R$-linear map $f : M\rightarrow N$
is said to be a {\it graded morphism of degree} $\tau$ with $\tau\in G$ if $f(M_{\sigma})\subseteq N_{\sigma\tau}$ for all
 $\sigma\in G$. Graded morphisms of degree $\sigma$ build an additive subgroup ${\rm HOM}_{R}(M,N)_{\sigma}$ of ${\Hom}_{R}(M,N)$. Then ${\rm HOM}_{R}(M,N) = \bigoplus_{\sigma\in G}{\rm HOM}_{R}(M,N)_{\sigma}$ is a graded abelian group of type $G$. We will denote by $\Ext_{R-{\rm gr}}^{i}$
 and ${\rm EXT}_{R}^i$ the right derived functors of ${\rm Hom}_{R-{\rm gr}}$ and ${\rm HOM}_{R}$, respectively.
Given a graded left $R$-module $M$, the {\it graded character module} of $M$ is defined as $M^{*} := {\rm HOM}_{\mathbb{Z}}(M,\mathbb{Q}/\mathbb{Z})$, where $\mathbb{Q}$ is the rational numbers
field and $\mathbb{Z}$ is the integers ring. It is easy to see that $M^{*} =\bigoplus_{\sigma\in G}{\rm HOM}_{\mathbb{Z}}(M_{\sigma^{-1}}, \mathbb{Q}/\mathbb{Z})$.

 Let $M$ be a graded right $R$-module and $N$ a graded left $R$-module. The abelian
group $M\otimes_{R}N$  may be graded by putting $(M
\otimes_{R} N)_{\sigma} $ with $\sigma\in G$ to be the additive
subgroup generated by elements $x\otimes y$
  with $x\in M_{\alpha}$ and $y\in N_{\beta}$ such that $\alpha\beta=\sigma$.
The object of $\mathbb{Z}$-gr thus defined will be called  the {\it graded tensor product} of $M$ and $N$.

If $M$ is a graded left $R$-module and $\sigma\in G$, then $M(\sigma)$ is the graded left $R$-module
obtained by putting $M(\sigma)_{\tau} = M_{\tau\sigma}$ for any $\tau\in G$. The graded module $M(\sigma)$ is
called the $\sigma$-{\it suspension} of $M$. We may regard the $\sigma$-suspension as an isomorphism
of categories $T_{\sigma}: R$-gr$ \  \rightarrow R$-gr, given on objects as $T_{\sigma}(M)=M(\sigma)$ for any $M\in$ $ R$-gr.
The forgetful functor ${U} : R$-gr$ \ \rightarrow R$-Mod associates to $M$ the underlying
ungraded $R$-module. This functor has a right adjoint $F$ which associated
to $M\in R$-Mod the graded $R$-module ${F}(M) = \bigoplus_{\sigma\in G}(^{\sigma}M)$, where each $^{\sigma}M$
is a copy of $M$ written $\{^{\sigma}x : x \in M\}$ with $R$-module structure defined by
$r{*}^{\tau}x =^{\sigma\tau}(rx)$ for each $r\in R_{\sigma}$. If $f : M\rightarrow N$ is $R$-linear, then ${F}(f) :{F}(M)\rightarrow {F}(N)$ is a graded morphism given by ${F}(f)(^{\sigma}x)=^{\sigma}f(x)$.

The injective (resp. flat) objects of $R$-gr (resp. gr-$R$) will be called {\it gr-injective} (resp. {\it gr-flat}) modules, because $M$ is gr-injective (resp. gr-flat) if and only if it is a injective
(resp. flat) graded module. By gr-$\pd_{R}(M)$ and gr-$\fd_{R}(M)$ we will denote the gr-projective and
gr-flat dimension of a graded module M, respectively. A graded left (resp. right) module $M$  is said to be {\it Gorenstein gr-injective} (resp. {\it Gorenstein gr-flat})
\cite{MJO, MJL, MJM} if there
is an exact sequence
 $$\cdots\longrightarrow I_{1}  \longrightarrow I_{0} \longrightarrow I^0\longrightarrow I^1\longrightarrow \cdots$$ of gr-injective (resp. gr-flat ) left (resp. right) modules
with $M= {\rm ker}(I^0\rightarrow
I^1)$ such that $\Hom_{R-{\rm gr}}(E,-)$ (resp. $-\otimes_{R}E$)  leaves the sequence
exact whenever $E$ is a gr-injective $R$-module.
The gr-injective envelope of $M$ is denoted by $E^g(M)$.  A graded left module $M$  is said to be {\it Ding gr-injective} \cite{L.X2} if there is an exact sequence $\cdots\rightarrow I_1\rightarrow I_{0}\rightarrow I^0\rightarrow I^1\rightarrow \cdots$ of gr-injective left modules, with $M= {\ker}(I^0\rightarrow
I^1)$ such that ${\rm Hom}_{R-{\rm gr}}(E,-) $  leaves the sequence exact whenever $E$ is an FP-gr-injective left $R$-module.

\begin{Def}[\cite{NG}, Definition 3.1] \label{NG}
Let $n \geq 0$ be an integer. Then, a graded left module $U$ is called
$n$-presented, if there exists an exact sequence
$
F_n\rightarrow F_{n-1} \rightarrow\cdots \rightarrow F_1\rightarrow F_0\rightarrow U\rightarrow 0$
in $R$-gr with each $F_i$ is finitely generated free left $R$-module.

Set $K_{n-1}={\rm Im}(F_{n-1}\rightarrow F_{n-2})$ and $K_{n}={\rm Im}(F_{n}\rightarrow F_{n-1})$. Then we get a short exact sequence $
0\rightarrow K_n  \rightarrow F_{n-1} \rightarrow K_{n-1}\rightarrow 0$ in $R$-gr with $F_{n-1}$ is a finitely generated free module. The modules $K_n$ and $K_{n-1}$ will be called {\it special finitely gr-generated} and special finitely gr-presented, respectively.  The sequence $(\Delta):
0\rightarrow K_n  \rightarrow F_{n-1} \rightarrow K_{n-1}\rightarrow 0$ in $R$-gr will be called a special short exact sequence.

Moreover, a short exact sequence $
0\rightarrow A \rightarrow B \rightarrow C\rightarrow 0$ in $R$-gr is called special gr-pure  if the induced sequence
$$
0\longrightarrow {\rm HOM}_{R}(K_{n-1},A) \longrightarrow  {\rm HOM}_{R}(K_{n-1},B)\longrightarrow {\rm HOM}_{R}(K_{n-1},C) \longrightarrow 0$$ is exact for every special finitely gr-presented module $K_{n-1}$. In this case  $A$ is said to be special gr-pure in $B$.

Analogously to the classical case, a graded ring $R$ is called left $n$-gr-coherent  if each $n$-presented module in $R$-gr is $(n + 1)$-presented.
\end{Def}

Ungraded $n$-presented modules have been used by many authors in order to extend some  homological notions. For 
example,  in \cite{BPCot}, let $R$ be an associative ring and $M$ a left $R$-module, then  module $M$ is called {\it $FP_{n}$-injective}  if ${\rm Ext}_{R}^{1}(L,M)=0$ for all $n$-presented left
$R$-modules $L$. In 2018,   Zhao,  Gao and   Huang in \cite{NG} showed that if we similarly use the derived functor ${\rm EXT}^{1}$ to define the $FP_n$-gr-injective
and $FP_{\infty}$-gr-injective modules, then they are just the $FP_n$-injective and
$FP_{\infty}$-injective objects in the class of graded modules, respectively. If $L$ is an $n$-presented graded left $R$-module with $n\geq 2$, then ${\rm EXT}_{R}^{1}(L,M)={\rm Ext}_{R}^{1}(L,M)$ for any graded $R$-module $M$. For this reason, they introduced the  concept of $n$-FP-gr-injective modules as follows: A graded left $R$-module
$M$ is called {\it $n$-FP-gr-injective} \cite{NG} if ${\rm EXT}_{R}^{n}(N,M)=0$ for any finitely $n$-presented graded
left $R$-module $N$. If $n=1$, then $M$ is {\it FP-gr-injective}. A graded right $R$-module
$M$ is called {\it $n$-gr-flat} \cite{NG}  if ${\rm Tor}_{R}^{n}(M,N)=0$ for any finitely $n$-presented graded left
$R$-module $N$.

If $U$ is an $n$-presented graded left $R$-module and $0\rightarrow K_n  \rightarrow F_{n-1} \rightarrow K_{n-1}\rightarrow 0$
is a special short exact sequence in $R$-gr with respect to $U$, then ${\rm EXT}_{R}^{n}(U,M)\cong{\rm EXT}_{R}^{1}(K_{n-1},M)$ for every graded left $R$-module $M$, and ${\rm Tor}_{n}^{R}(M,U)\cong{\rm Tor}_{n}^{R}(M, K_{n-1})$ for every graded right $R$-module $M$. The $n$-FP-gr-injective dimension of a graded
left $R$-module $M$, denoted by $n.{\rm FP}$-${\rm gr}$-${\rm id}_{R}(M)$, is defined to be the least integer $k$ such
that ${\rm EXT}_{R}^{k+1}(K_{n-1}, M)=0$  for any special gr-presented module
$K_{n-1}$ in $R$-gr. The $n$-${\rm gr}$-flat dimension of a graded
right $R$-module $M$, denoted by $n$-gr-${\rm fd}_{R}(M)$, is defined to be the least integer $k$ such
that ${\rm Tor}_{k+1}^{R}(M, K_{n-1})=0$  for any special gr-presented module $K_{n-1}$ in $R$-gr. Also, $r.n.{\rm FP}$-${\rm gr}$-${\rm dim}(R)$=sup\{$n.{\rm FP}$-${\rm gr}$-${\id_{R}}(M)\ \mid {\rm \  M \ is \ a \ graded \ left \  module}\}$ and
$r.n$-${\rm gr}$-${\rm dim}(R)$=sup\{ $n$-${\rm gr}$-${\fd_{R}}(M) \ \mid {\rm \  M \ is \ a \ graded \ right \ module}\}$.
\section{Gorenstein $n$-FP-gr-Injective and Gorenstein $n$-gr-Flat Modules}
\ \ In this section, we introduce and study   Gorenstein $n$-FP-gr-injective and  Gorenstein $n$-gr-flat modules which are defined as follows:
\begin{Def}\label{2.2}
Let $R$ be a graded ring and $n\geq 1$ an integer. Then,
  a  module $M$ in $R$-gr  is  called {\it  Gorenstein $n$-FP-gr-injective} if there exists an    exact sequence of  $n$-FP-gr-injective modules in $R$-gr of this form:
$${\mathbf{A}}= \cdots\longrightarrow A_1\longrightarrow A_{0}\longrightarrow A^0\longrightarrow
A^1\longrightarrow\cdots$$ with $M=\ker(A^0\rightarrow A^1)$ such that ${\rm HOM}_{R}(K_{n-1},{\mathbf{A}})$  is an exact sequence whenever $K_{n-1}$  is a special gr-presented module in $R$-gr with gr-${\rm pd}_R(K_{n-1})<\infty.$

The class of Gorenstein $n$-FP-gr-injective will be denoted $\mathcal{G}_{gr-\mathcal{FI}_n}$.

  A module $N$ in gr-$R$  is called  Gorenstein $n$-gr-flat  if there exists the following exact sequence of $n$-gr-flat  modules in gr-$R$ of this form:
$${\mathbf{F}}= \cdots\longrightarrow F_1\longrightarrow F_{0}\longrightarrow F^0\longrightarrow
F^1\longrightarrow\cdots$$ with $N=\ker(F^0\rightarrow F^1)$ such that ${\mathbf{F}}\otimes_{R}K_{n-1}$   is an exact sequence   whenever $K_{n-1}$  is a special gr-presented module in $R$-gr with gr-${\rm fd}_R(K_{n-1})<\infty.$

The class of Gorenstein $n$-FP-gr-flat will be denoted $\mathcal{G}_{gr-\mathcal{F}_n}$.

\end{Def}

In the    ungraded case,  the $R$-modules $A_i$ and $A^i$ (resp. $F_i$ and $F^i$) as in  the definition above,  are called  $n$-FP-injective (resp. $n$-flat).  Also, $R$-modules $M$ and $N$ are called  Gorenstein $n$-FP-injective and Gorenstein $n$-flat, respectively, and $K_{n-1}$ is a special presented left module with respect to any $n$-presented left $R$-module $U$.

\begin{rem}\label{21}
Let $R$ be a graded ring. Then:
\begin{enumerate}
\item [\rm (1)] $gr$-$\mathcal{I}\subseteq gr$-$\mathcal{FI}_{1}\subseteq gr$-$\mathcal{FI}_{2}\subseteq\cdots\subseteq gr$-$\mathcal{FI}_{n}\subseteq\mathcal{G}_{gr-\mathcal{FI}_n}$. But, Gorenstein $n$-FP-gr-injective $R$-modules need not be
gr-injective, see Example \ref{ex2}(1). Also,
 $gr$-$\mathcal{F}\subseteq gr$-$\mathcal{F}_{1}\subseteq gr$-$\mathcal{F}_{2}\subseteq\cdots\subseteq gr$-$\mathcal{F}_{n}\subseteq\mathcal{G}_{gr-\mathcal{F}_n}$.

In general, every Gorenstein $n$-FP-gr-injective (resp. Gorenstein $n$-gr-flat) $R$-module  is not $n$-FP-gr-injective (resp. $n$-gr-flat),   except in a certain state, see Proposition \ref{2.2a}.
\item [\rm (2)]  $\mathcal{G}_{gr-\mathcal{FI}_1}\subseteq\mathcal{G}_{gr-\mathcal{FI}_2}\subseteq\cdots\subseteq\mathcal{G}_{gr-\mathcal{FI}_n}$ and $\mathcal{G}_{gr-\mathcal{F}_1}\subseteq\mathcal{G}_{gr-\mathcal{F}_2}\subseteq\cdots\subseteq\mathcal{G}_{gr-\mathcal{F}_n}$.  But for any integers $m> n$, Gorenstein $m$-FP-gr-injective (resp. Gorenstein $m$-gr-flat) $R$-modules need not be
Gorenstein $n$-FP-gr-injective (resp. Gorenstein $n$-gr-flat), see Example \ref{ex2}(2, 3).
\item [\rm (3)]
In Definition \ref{2.2}, it is clear that
 $\ker(A_i\rightarrow A_{i-1})$ and $\ker(A^i\rightarrow A^{i+1})$ are Gorenstein $n$-FP-gr-injective, and $\ker(F_i\rightarrow F_{i-1})$, $\ker(F^i\rightarrow F^{i+1})$ are Gorenstein $n$-gr-flat for any $i\geq 1$.
\end{enumerate}
\end{rem}
It is known that the trivial extension of a commutative ring $A$ by an $A$-module $M$,  $R=A\ltimes M$, is a $\mathbb{Z}_{2}$-graded ring, see \cite{DAB, DAA}.

\begin{ex}\label{ex2}
(1)
 Let $K$ be a field with characteristic $p\neq 0$ and let $G = \cup_{k\geq1}G_k$, where
$G_k$ is the cyclic group with generator $a_k$, the order of $a_k$ is $p^k$ and $a_k = a_{k+1}^p$. Let $R = K[G]$.  Then, by Remark \ref{21},  $R[H]$ is Gorenstein $n$-FP-gr-injective for every group $H$, since by \cite[Example iii]{MJP}, $R[H]$ is $n$-FP-gr-injective but it is not gr-injective.

(2) Let $A$ be a field, $E$ a nonzero $A$-vector space  and $R=A\ltimes E$ be a trivial extension of $A$ by $E$.  If $\dim _{A}E=1$, then by Remark \ref{21}, every $R$-module in $R$-gr is Gorenstein $n$-FP-gr-injective, see \cite[Corollary 2.2]{KMK}. If $E$ is  an $A$-vector space with infinite rank, then by \cite[Theorem 3.4]{L.g}, every $2$-presented module in $R$-gr is projective. So, every module in $R$-gr is $2$-FP-gr-injective and hence, every module in $R$-gr is Gorenstein $2$-FP-gr-injective. If every module in $R$-gr is Gorenstein $1$-FP-gr-injective, then $R$ is gr-regular,  contradiction.

(3)  Let $R=k[X]$, where $k$ is a field. Then, by Theorem \ref{thm2}, every graded right $R$-module is Gorenstein $2$-gr-flat, and there is a graded right $R$-module that is not Gorenstein $1$-gr-flat, 
since $l.$FP- gr-$ dim(R)\leq1$, see Proposition \ref{2.2a} and \cite[Example 3.6]{NG}.
\end{ex}

We start with the result which proves that the behaviour of Gorenstein $n$-FP-gr-injective (resp. Gorenstein $n$-gr-flat) modules in short exact sequences is the same as  the one of the classical homological notions.

\begin{prop}\label{prop1}
Let $R$ be a graded ring. Then:
\begin{enumerate}
\item [\rm (1)]
For every short exact sequence   $0 \rightarrow A\rightarrow B\rightarrow C\rightarrow 0$ in $R$-gr, $B$ is Gorenstein $n$-FP-gr-injective if $A$ and $C$ are  Gorenstein $n$-FP-gr-injective.
\item [\rm (2)]
For every short exact sequence   $0 \rightarrow A\rightarrow B\rightarrow C\rightarrow 0$ in gr-$R$, $B$ is Gorenstein $n$-gr-flat if $A$ and $C$ are  Gorenstein $n$-gr-flat.
\end{enumerate}
\end{prop}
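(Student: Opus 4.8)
The plan is to prove part (1) and then obtain part (2) by a formally dual argument (replacing $\mathrm{HOM}$-exactness by tensor-exactness, $n$-FP-gr-injective by $n$-gr-flat, and gr-projective dimension by gr-flat dimension throughout). So I focus on (1). Suppose $A$ and $C$ are Gorenstein $n$-FP-gr-injective. By Definition \ref{2.2} there are exact complexes of $n$-FP-gr-injective modules
$$\mathbf{A}_A=\cdots\to A_1\to A_0\to A^0\to A^1\to\cdots,\qquad \mathbf{A}_C=\cdots\to C_1\to C_0\to C^0\to C^1\to\cdots$$
with $A=\ker(A^0\to A^1)$ and $C=\ker(C^0\to C^1)$, each remaining exact after applying $\mathrm{HOM}_R(K_{n-1},-)$ for every special gr-presented $K_{n-1}$ with gr-$\mathrm{pd}_R(K_{n-1})<\infty$. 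The idea is to build a complete $n$-FP-gr-injective resolution of $B$ by splicing together direct sums of the pieces of $\mathbf{A}_A$ and $\mathbf{A}_C$, dimension-shifting along the given short exact sequence, so that $B$ sits in degree $0$.

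The key steps, in order: First, form the complex $\mathbf{A}_B$ whose term in each degree is the direct sum of the corresponding terms of $\mathbf{A}_A$ and $\mathbf{A}_C$; since the class $gr$-$\mathcal{FI}_n$ is closed under finite direct sums (it is defined by vanishing of an additive functor $\mathrm{EXT}^n_R(N,-)$), every term of $\mathbf{A}_B$ is $n$-FP-gr-injective. Second, one must arrange the differentials so that $B=\ker(\mathbf{A}_B^{\,0}\to\mathbf{A}_B^{\,1})$ rather than $A\oplus C$; this is the Horseshoe-type step. Working to the right of degree $0$: write $0\to A\to B\to C\to 0$, let $A=\ker(A^0\to A^1)$ and $C=\ker(C^0\to C^1)$ with cosyzygies $A^{(1)},C^{(1)}$, etc. Since each $C^i$ is $n$-FP-gr-injective and the right part $0\to A\to A^0\to A^1\to\cdots$ has $n$-FP-gr-injective terms, a standard argument produces an exact sequence $0\to B\to A^0\oplus C^0\to A^{(1)}\oplus C^{(1)}\to\cdots$ realizing $B$ as the kernel at the first spot, with each cosyzygy an extension of the corresponding cosyzygy of $C$ by that of $A$ (so again Gorenstein $n$-FP-gr-injective by an inductive use of the same splicing, or more simply just noting the terms are $n$-FP-gr-injective). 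To the left of degree $0$: using that $gr$-$\mathcal{FI}_n$ contains $gr$-$\mathcal{I}$, and that every module has a gr-injective, hence $n$-FP-gr-injective, copresentation — actually here we need the left parts $\cdots\to A_1\to A_0\to A$ and $\cdots\to C_1\to C_0\to C$; since $A_0\twoheadrightarrow A$ and $C_0\twoheadrightarrow C$ are epimorphisms with $n$-FP-gr-injective source, the dual Horseshoe Lemma gives an exact sequence $\cdots\to A_1\oplus C_1\to A_0\oplus C_0\to B\to 0$ with $n$-FP-gr-injective terms and syzygies that are extensions of the $C$-syzygies by the $A$-syzygies. Splicing the left and right halves at $B$ yields the required exact complex $\mathbf{A}_B$ of $n$-FP-gr-injective modules with $B$ in degree $0$. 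Third, and this is the genuine content, one must check that $\mathrm{HOM}_R(K_{n-1},\mathbf{A}_B)$ is exact for every special gr-presented $K_{n-1}$ with gr-$\mathrm{pd}_R(K_{n-1})<\infty$. For the $\mathrm{HOM}$-exactness: applying $\mathrm{HOM}_R(K_{n-1},-)$ to each short exact sequence $0\to(\text{syzygy of }A)\to A_i\oplus C_i\to(\text{next syzygy})\to 0$ and to the cosyzygy sequences, one reduces, via the long exact sequence for $\mathrm{EXT}^{\bullet}_R(K_{n-1},-)$, to showing that $\mathrm{EXT}^1_R(K_{n-1},-)$ vanishes on all the syzygies and cosyzygies appearing. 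But $\mathrm{EXT}^1_R(K_{n-1},-)$ applied to a short exact sequence $0\to X'\to X\to X''\to 0$ with $X',X''$ in the relevant class, together with the known $\mathrm{HOM}$-exactness of $\mathbf{A}_A$ and $\mathbf{A}_C$ which says precisely $\mathrm{EXT}^1_R(K_{n-1},\text{cosyzygies of }A\text{ and }C)=0$ and the analogous $\mathrm{EXT}^i$-vanishing for syzygies, gives the claim by the five-lemma / long exact sequence applied degreewise to $0\to\mathbf{A}_A\to\mathbf{A}_B\to\mathbf{A}_C\to 0$. Indeed the cleanest formulation: the sequence of complexes $0\to\mathbf{A}_A\to\mathbf{A}_B\to\mathbf{A}_C\to 0$ is exact and degreewise split (each term $A_i\oplus C_i$), so $\mathrm{HOM}_R(K_{n-1},-)$ sends it to a short exact sequence of complexes $0\to\mathrm{HOM}_R(K_{n-1},\mathbf{A}_A)\to\mathrm{HOM}_R(K_{n-1},\mathbf{A}_B)\to\mathrm{HOM}_R(K_{n-1},\mathbf{A}_C)\to 0$; the outer two are exact by hypothesis, hence the middle one is exact by the long exact homology sequence. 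This last observation is what makes the whole argument go through painlessly once the complex $\mathbf{A}_B$ is correctly assembled as a degreewise-split extension.

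The main obstacle, and the step deserving real care, is the construction of $\mathbf{A}_B$ so that simultaneously (i) $B$ genuinely sits as $\ker(\mathbf{A}_B^{\,0}\to\mathbf{A}_B^{\,1})$, not merely $A\oplus C$, and (ii) the extension $0\to\mathbf{A}_A\to\mathbf{A}_B\to\mathbf{A}_C\to 0$ is degreewise split so that the $\mathrm{HOM}$-functor preserves exactness. The two Horseshoe Lemmas (one on the left, one on the right of the complex) must be applied compatibly so that the resulting complex is a legitimate chain complex — i.e. the composites of consecutive differentials vanish and the homology vanishes everywhere. Concretely one does this by induction on syzygy/cosyzygy degree, at each stage choosing the connecting maps via the injectivity-type lifting property of the $n$-FP-gr-injective terms against the short exact sequence of modules being resolved. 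Once this bookkeeping is done, everything else is the formal long-exact-sequence argument sketched above, and part (2) follows by the evident dualization using that $gr$-$\mathcal{F}_n$ is closed under finite direct sums and that $\mathbf{F}\otimes_R K_{n-1}$ behaves on a degreewise-split short exact sequence of complexes exactly as $\mathrm{HOM}_R(K_{n-1},-)$ does.
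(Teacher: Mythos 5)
Your proposal follows the same route as the paper's own proof: both form the degreewise direct sum of the two complete $n$-FP-gr-injective resolutions of $A$ and $C$, arrange for $B$ to appear as the kernel in degree zero, and then verify ${\rm HOM}_{R}(K_{n-1},-)$-exactness of the resulting complex from the corresponding property of the two outer complexes. Your handling of the last step is in fact cleaner than the paper's: the paper checks exactness by computing ${\rm EXT}_{R}^{1}(K_{n-1},-)$ on the individual syzygies (which it moreover records as direct sums $K_{i}'\oplus K_{i}''$, whereas in any Horseshoe-type construction they are only extensions of $K_{i}''$ by $K_{i}'$ --- harmless for the ${\rm EXT}$-vanishing, but not literally what is written), while you observe that the degreewise-split short exact sequence of complexes $0\to\mathbf{A}_A\to\mathbf{A}_B\to\mathbf{A}_C\to 0$ remains a short exact sequence of complexes after applying ${\rm HOM}_{R}(K_{n-1},-)$ and conclude by the long exact homology sequence.

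The one point where your justification overreaches is exactly the step you single out as the crux. You assert that the connecting maps in the Horseshoe gluing can be chosen ``via the injectivity-type lifting property of the $n$-FP-gr-injective terms against the short exact sequence of modules being resolved.'' But $n$-FP-gr-injective modules are only injective relative to \emph{special} short exact sequences (equivalently, ${\rm EXT}_{R}^{1}(K_{n-1},-)$ vanishes on them for special gr-presented $K_{n-1}$); they need not have the extension property along the monomorphism $A\to B$, whose cokernel is the Gorenstein module $C$ rather than a special gr-presented module, nor the dual lifting property against the epimorphism $B\to C$. Concretely, the obstruction classes in ${\rm Ext}^{1}_{R\text{-}{\rm gr}}(C,A^{0})$ and ${\rm Ext}^{1}_{R\text{-}{\rm gr}}(C_{0},A)$ that must vanish for the two Horseshoe steps are not controlled by any hypothesis of the proposition. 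To be fair, the paper's proof does not address this either --- it simply asserts the existence of the spliced exact complex with $B=\Ker(A^{0}\oplus C^{0}\to A^{1}\oplus C^{1})$ --- so your argument is no less complete than the published one; but the splicing step, not the ${\rm HOM}$-exactness, is where the real content (and, as both texts stand, the gap) lies.
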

\begin{proof}
 (1) By Definition \ref{2.2}, there is an  exact sequence $\cdots\rightarrow A_1\rightarrow A_{0}\rightarrow A^{0}\rightarrow A^{1}\rightarrow\cdots$ of $n$-FP-gr-injective modules in $R$-gr, where $A={\Ker}(A^{0}\rightarrow A^{1})$, $K_{i}^{'}={\Ker}(A_{i}\rightarrow A_{i-1})$ and $(K^{i})^{'}={\Ker}(A^{i}\rightarrow A^{i+1})$. Also,  there is an  exact sequence $\cdots\rightarrow C_1\rightarrow C_{0}\rightarrow C^{0}\rightarrow C^{1}\rightarrow\cdots$ of $n$-FP-gr-injective modules in $R$-gr, where $C={\Ker}(C^{0}\rightarrow C^{1})$, $K_{i}^{''}={\Ker}(C_{i}\rightarrow C_{i-1})$ and $(K^{i})^{''}={\Ker}(C^{i}\rightarrow C^{i+1})$.
For any $n$-presented graded left module $P$, ${\rm EXT}_{R}^{n}(P,A_{i}\oplus C_{i})={\rm EXT}_{R}^{n}(P,A_{i})\oplus{\rm EXT}_{R}^{n}(P, C_{i}) =0$, then $A_{i}\oplus C_{i}$ is $n$-FP-gr-injective for any $i\geq 0$. Similarly,  $A^{i}\oplus C^{i}$ is $n$-FP-gr-injective for any $i\geq 0$. Therefore, there is an exact sequence
$${\mathbf{\Y}}= \cdots\longrightarrow A_1\oplus C_1\longrightarrow A_{0}\oplus C_{0}\longrightarrow A^{0}\oplus C^{0}\longrightarrow
A^{1}\oplus C^{1}\longrightarrow\cdots$$
of $n$-FP-gr-injective modules in $R$-gr, where $B={\Ker}(A^{0}\oplus C^{0}\rightarrow A^{1}\oplus C^{1})$, $K_{i}=K_{i}^{'}\oplus K_{i}^{''}={\Ker}(A_{i}\oplus C_{i}\rightarrow A_{i-1}\oplus C_{i-1})$ and $K^{i}=(K^{i})^{'}\oplus (K^{i})^{''}={\Ker}(A^{i}\oplus C^{i}\rightarrow A^{i+1}\oplus C^{i+1})$. Let $K_{n-1}$ be a special gr-presented module in $R$-gr with gr-${\pd}_R(K_{n-1})<\infty$. Then, ${\rm EXT}_{R}^{1}(K_{n-1}, B)=0$, and also we have:
${\rm EXT}_{R}^{1}(K_{n-1}, K_{i})={\rm EXT}_{R}^{1}(K_{n-1}, K_{i}^{'}\oplus K_{i}^{''})=0$.
Similarly, ${\rm EXT}_{R}^{1}(K_{n-1}, K^{i})=0$. Consequently, ${\rm HOM}_{R}(K_{n-1}, {\mathbf{\Y}})$ is  exact and so, $B$ is Gorenstein $n$-FP-gr-injective.

(2) By Definition \ref{2.2}, there is an  exact sequence $\cdots\rightarrow A_1\rightarrow A_{0}\rightarrow A^{0}\rightarrow A^{1}\rightarrow\cdots$ of $n$-gr-flat modules in gr-$R$, where $A={\Ker}(A^{0}\rightarrow A^{1})$, $K_{i}^{'}={\Ker}(A_{i}\rightarrow A_{i-1})$ and $(K^{i})^{'}={\Ker}(A^{i}\rightarrow A^{i+1})$. Also,  there is an  exact sequence $\cdots\rightarrow C_1\rightarrow C_{0}\rightarrow C^{0}\rightarrow C^{1}\rightarrow\cdots$ of $n$-gr-flat modules in gr-$R$, where $C={\Ker}(C^{0}\rightarrow C^{1})$, $K_{i}^{''}={\Ker}(C_{i}\rightarrow C_{i-1})$ and $(K^{i})^{''}={\Ker}(C^{i}\rightarrow C^{i+1})$.
Similarly to (1), there is an exact sequence
$${\mathbf{\Y}}= \cdots\longrightarrow A_1\oplus C_1\longrightarrow A_{0}\oplus C_{0}\longrightarrow A^{0}\oplus C^{0}\longrightarrow
A^{1}\oplus C^{1}\longrightarrow\cdots$$
of $n$-gr-flat modules in gr-$R$, where $B={\Ker}(A^{0}\oplus C^{0}\rightarrow A^{1}\oplus C^{1})$, and if $K_{n-1}$ is a special gr-presented module in $R$-gr with gr-${\fd}_R(K_{n-1})<\infty$, then  ${\mathbf{\Y}}\otimes_{R}K_{n-1}$ is  exact and so, $B$ is Gorenstein $n$-gr-flat.
\end{proof}

Transfer results  of $n$-FP-injective and Gorenstein $n$-FP-injective modules with respect to the functor $F$ is given in the following result.

\begin{prop}\label{prop2}
Let $R$ be a ring graded by a group $G$.
\begin{enumerate}
\item [\rm (1)]
If $M$  is an $n$-FP-injective left $R$-module,
 then $F(M)$ is $n$-FP-gr-injective.
\item [\rm (2)]
If $M$  is a Gorenstein $n$-FP-injective left $R$-module,
 then $F(M)$ is Gorenstein $n$-FP-gr-injective.
 \end{enumerate}
\end{prop}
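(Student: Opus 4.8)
The plan is to rely on two standard features of the right adjoint $F$ of the forgetful functor $U\colon R\text{-gr}\to R\text{-Mod}$. First, $F$ is exact: from the description $F(M)=\bigoplus_{\sigma\in G}({}^{\sigma}M)$ one sees that $U\circ F$ is the functor $M\mapsto M^{(G)}$, which is exact, and since $U$ reflects exactness, $F$ carries exact sequences of $R$-modules to exact sequences in $R$-gr. Second, for every graded module $A$ and every $R$-module $X$ there is an isomorphism of abelian groups
$$
{\rm HOM}_{R}(A,F(X))\;\cong\;\bigoplus_{\sigma\in G}{\rm Hom}_{R}(U(A),X),
$$
natural in both $A$ and $X$: the $\sigma$-component ${\rm HOM}_{R}(A,F(X))_{\sigma}$ equals ${\rm Hom}_{R\text{-gr}}(A,T_{\sigma}F(X))$, and $T_{\sigma}F(X)\cong F(X)$ naturally in $X$ via $\,{}^{\tau\sigma}x\mapsto{}^{\tau}x$, so each component is ${\rm Hom}_{R\text{-gr}}(A,F(X))\cong{\rm Hom}_{R}(U(A),X)$ by the adjunction $U\dashv F$. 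I will also use that $U$ sends a finitely generated graded free module to a finitely generated free module: consequently $U$ sends an $n$-presented module of $R$-gr to an $n$-presented module of $R$-Mod, and the image under $U$ of a special short exact sequence in $R$-gr is again a special short exact sequence in $R$-Mod, so $U(K_{n-1})$ is special presented in $R$-Mod whenever $K_{n-1}$ is special gr-presented; finally, $\pd_{R}(U(A))\le\text{gr-}\pd_{R}(A)$ for every graded $A$ (apply $U$ to a finite graded projective resolution).

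For (1): let $N$ be an $n$-presented module in $R$-gr and choose an exact sequence $\cdots\to P_{1}\to P_{0}\to N\to0$ in $R$-gr with all $P_{i}$ graded free and $P_{0},\dots,P_{n}$ finitely generated. Applying $U$ gives a free resolution of $U(N)$ exhibiting $U(N)$ as $n$-presented. Then ${\rm EXT}_{R}^{n}(N,F(M))$ is the degree-$n$ cohomology of ${\rm HOM}_{R}(P_{\bullet},F(M))$, which by the displayed isomorphism is that of $\bigoplus_{\sigma\in G}{\rm Hom}_{R}(U(P_{\bullet}),M)$; since direct sums are exact, it equals $\bigoplus_{\sigma\in G}{\rm Ext}_{R}^{n}(U(N),M)$, and this vanishes because $M$ is $n$-FP-injective. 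Hence ${\rm EXT}_{R}^{n}(N,F(M))=0$ for every $n$-presented module $N$ of $R$-gr, i.e. $F(M)$ is $n$-FP-gr-injective.

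For (2): since $M$ is Gorenstein $n$-FP-injective, fix an exact sequence $\mathbf{E}=\cdots\to E_{1}\to E_{0}\to E^{0}\to E^{1}\to\cdots$ of $n$-FP-injective left $R$-modules with $M=\ker(E^{0}\to E^{1})$ such that ${\rm Hom}_{R}(K,\mathbf{E})$ is exact for every special presented $R$-module $K$ with $\pd_{R}(K)<\infty$. Because $F$ is exact, $F(\mathbf{E})$ is an exact complex in $R$-gr with $F(M)=\ker(F(E^{0})\to F(E^{1}))$, and by part (1) each term $F(E_{i}),F(E^{i})$ is $n$-FP-gr-injective. It remains to check that ${\rm HOM}_{R}(K_{n-1},F(\mathbf{E}))$ is exact whenever $K_{n-1}$ is special gr-presented in $R$-gr with $\text{gr-}\pd_{R}(K_{n-1})<\infty$. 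For such $K_{n-1}$, the module $U(K_{n-1})$ is special presented and $\pd_{R}(U(K_{n-1}))\le\text{gr-}\pd_{R}(K_{n-1})<\infty$, so ${\rm Hom}_{R}(U(K_{n-1}),\mathbf{E})$ is exact; applying the displayed natural isomorphism term by term gives ${\rm HOM}_{R}(K_{n-1},F(\mathbf{E}))\cong\bigoplus_{\sigma\in G}{\rm Hom}_{R}(U(K_{n-1}),\mathbf{E})$, a direct sum of exact complexes, hence exact. Therefore $F(M)$ is Gorenstein $n$-FP-gr-injective.

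The routine ingredients — exactness of $F$, the behaviour of $U$ on $n$-presented and special gr-presented modules, and the projective-dimension inequality — require only unwinding definitions. The one point I expect to need care is the $\rm HOM$-formula above and its naturality in the second variable, since it is precisely this that transports the exactness of ${\rm Hom}_{R}(U(K_{n-1}),\mathbf{E})$ to that of ${\rm HOM}_{R}(K_{n-1},F(\mathbf{E}))$ in part (2); the facts underneath it are the adjunction $U\dashv F$ and the translation invariance $T_{\sigma}F(X)\cong F(X)$. One should also make sure that the $U$-image of a special short exact sequence in $R$-gr is genuinely a special short exact sequence in $R$-Mod, so that the attribute ``special presented'' is legitimately inherited.
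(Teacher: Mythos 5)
Your proposal is correct and follows essentially the same route as the paper: exactness of $F$, the adjunction $U\dashv F$ together with the decomposition ${\rm HOM}_{R}(A,F(X))\cong\bigoplus_{\sigma\in G}{\rm Hom}_{R-\mathrm{gr}}(A,F(X)(\sigma))\cong\bigoplus_{\sigma\in G}{\rm Hom}_{R}(U(A),X)$, and the observation that $U$ carries special gr-presented modules of finite graded projective dimension to special presented modules of finite projective dimension. The only difference is cosmetic: for part (1) the paper cites an analogue of \cite[Lemma 2.3]{ZL} and \cite[Proposition 3.10]{NG}, whereas you compute ${\rm EXT}_{R}^{n}(N,F(M))$ directly from a graded free resolution, which makes the argument more self-contained.
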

\begin{proof}
(1) If $0\rightarrow K_{n}\rightarrow F_{n-1}\rightarrow K_{n-1}\rightarrow 0$ is special short exact sequence in $R$-gr with respect to an $n$-presented graded left $R$-module $U$, then similar to the proof of \cite[Lemma 2.3]{ZL}, $0={\rm Ext}_{R-{\rm gr}}^{1}(K_{n-1}, F(M)(\sigma))={\rm Ext}_{R-{\rm gr}}^{n}(U, F(M)(\sigma))$, and hence by \cite[Proposition 3.10]{NG}, $F(M)$ is $n$-FP-gr-injective.

(2) Let $M$ be a Gorenstein $n$-FP-injective left $R$-module. Then, there exists an exact sequence of  $n$-FP-injective left modules :
$${\mathbf{B}}= \cdots\longrightarrow B_1\longrightarrow B_{0}\longrightarrow B^0\longrightarrow
B^1\longrightarrow\cdots$$ with $M=\ker(B^0\rightarrow B^1)$ such that ${\rm Hom}_{R}(K_{n-1}^{'},{\mathbf{B}})$  is an exact sequence   whenever $K_{n-1}^{'}$  is  a special finitely presented module in $R$-gr with ${\rm pd}_R(K_{n-1}^{'})<\infty.$ By (1), $F(B_i)$ and $F(B^i)$ are $n$-FP-gr-injective for any $i\geq 0$. Since the functor $F$ is exact, we get the following exact sequence
$${\mathbf{F(B)}}= \cdots\longrightarrow F(B_1)\longrightarrow F(B_{0})\longrightarrow F(B^0)\longrightarrow
F(B^1)\longrightarrow\cdots$$ of $n$-FP-gr-injective left $R$-modules with $F(M)=\ker(F(B^0)\rightarrow F(B^1))$. If $K_{n-1}$ is  special gr-presented left module with gr-${\pd}_{R}(K_{n-1})<\infty$, then $U(K_{n-1})$ is finitely presented  with ${\pd}_{R}(U(K_{n-1}))<\infty$.  By hypothesis, ${\Hom}_{R}(U(K_{n-1}), {\mathbf{B}})$ is exact. Therefore, from ${\Hom}_{R}(U(K_{n-1}), {\mathbf{B}})={\Hom}_{R-{\rm gr}}(K_{n-1}, {\mathbf{F(B)}})$, it follows that ${\Hom}_{R-{\rm gr}}(K_{n-1}, {\mathbf{F(B)}})$ is exact and consequently, the isomorphism $${\rm HOM}_{R}(K_{n-1}, {\mathbf{F(B)}})=\bigoplus_{\sigma\in G}{\rm HOM}_{R}(K_{n-1}, {\mathbf{F(B)}})_{\sigma}\cong \bigoplus_{\sigma\in G}{\Hom}_{R-{\rm gr}}(K_{n-1}, {\mathbf{{\mathbf{F(B)}}(\sigma)}})$$ implies  that $F(M)$ is Gorenstein $n$-FP-gr-injective.
\end{proof}

Now, we give a characterization of a graded ring $R$ on which $n$-presented modules in $R$-gr with gr-${\rm pd}_{R}(U)<\infty$ (resp. gr-${\rm fd}_{R}(U)<\infty$) are $(n+1)$-presented. For this, we need the following lemma.

\begin{lem}\label{lem1}
Assume that  every $n$-presented module in $R$-gr with gr-${\rm fd}_{R}(U)<\infty$ is $(n+1)$-presented. Then, for any $t\geq 1$:
\begin{enumerate}
\item [\rm (1)]
${\rm EXT}_{R}^{t}(K_{n-1}, M)=0$ for any Gorenstein $n$-FP-gr-injective left $R$-module $M$ and  any
special gr-presented left $R$-module $K_{n-1}$ with gr-${\rm pd}_{R}(K_{n-1})<\infty$.
\item [\rm (2)]
${\rm Tor}_{t}^{R}(M, K_{n-1})=0$ for any Gorenstein $n$-gr-flat right $R$-module $M$  and any
special gr-presented left $R$-module $K_{n-1}$ with gr-${\rm fd}_{R}(K_{n-1})<\infty$.
\end{enumerate}
\end{lem}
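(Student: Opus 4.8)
The plan is to reduce both parts to a routine dimension shift along a complete resolution, the only substantive input being a vanishing statement for ${\rm EXT}$ (resp.\ ${\rm Tor}$) of a special gr-presented module of finite gr-projective (resp.\ gr-flat) dimension against an $n$-FP-gr-injective (resp.\ $n$-gr-flat) module, \emph{in every positive degree}. I will detail (1); part (2) is entirely dual, with ${\rm HOM}_{R}(-,-)$ and the functors ${\rm EXT}_{R}$ replaced by $-\otimes_{R}-$ and ${\rm Tor}_{R}$, and with the shift carried out along the right half of the complete $n$-gr-flat resolution (via cosyzygies) rather than the left half.

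First I would record two preliminary facts. \textbf{(a)} Under the hypothesis, every $n$-presented graded module $U$ with gr-${\rm fd}_{R}(U)<\infty$ is $m$-presented for every $m\geq 0$. To see this, prove by induction on $k\geq 0$ the statement ``every $(n+k)$-presented graded module of finite gr-flat dimension is $(n+k+1)$-presented'': the case $k=0$ is exactly the hypothesis, and for the step one takes a short exact sequence $0\to U'\to F_{0}\to U\to 0$ with $F_{0}$ finitely generated free, notes that $U'$ is $(n+k)$-presented and again of finite gr-flat dimension (a syzygy drops the gr-flat dimension by at most one), applies the inductive hypothesis to $U'$, and concludes that $U$ is $(n+k+2)$-presented. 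Iterating this on a fixed $U$ of finite gr-flat dimension gives the claim. \textbf{(b)} By Remark \ref{21}(1), an $n$-FP-gr-injective module is $m$-FP-gr-injective for every $m\geq n$, and dually an $n$-gr-flat module is $m$-gr-flat for every $m\geq n$.

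Next, let $K_{n-1}$ be a special gr-presented module with gr-${\rm pd}_{R}(K_{n-1})<\infty$, arising from some $n$-presented $U$ via an exact sequence $0\to K_{n-1}\to F_{n-2}\to\cdots\to F_{0}\to U\to 0$ with the $F_{i}$ finitely generated free (for $n=1$ one simply has $K_{0}=U$). Then $U$ has finite gr-flat dimension, so by (a) it is $m$-presented for all $m$; consequently $K_{n-1}$ is $m$-presented for all $m$ and fits into an exact resolution $\cdots\to P_{1}\to P_{0}\to K_{n-1}\to 0$ by finitely generated free graded modules whose $i$-th syzygy $S_{i}$ is precisely the special gr-presented module associated with $U$ at level $n+i$. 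Dimension shifting along this resolution, using that each $P_{i}$ is free, yields ${\rm EXT}_{R}^{j}(K_{n-1},A)\cong{\rm EXT}_{R}^{1}(S_{j-1},A)\cong{\rm EXT}_{R}^{n+j-1}(U,A)$ for every $j\geq 1$ and every graded module $A$. If $A$ is $n$-FP-gr-injective, then it is $(n+j-1)$-FP-gr-injective by (b), while $U$ is $(n+j-1)$-presented, so this group vanishes. Hence ${\rm EXT}_{R}^{j}(K_{n-1},A)=0$ for all $j\geq 1$ whenever $A$ is $n$-FP-gr-injective.

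Finally, take $M$ Gorenstein $n$-FP-gr-injective with a complete resolution $\mathbf{A}=\cdots\to A_{1}\to A_{0}\to A^{0}\to A^{1}\to\cdots$, $M=\ker(A^{0}\to A^{1})$, and set $M_{0}=M$ and $M_{k+1}=\ker(A_{k}\to M_{k})$, so that $0\to M_{k+1}\to A_{k}\to M_{k}\to 0$ is exact with every $A_{k}$ being $n$-FP-gr-injective. Applying ${\rm HOM}_{R}(K_{n-1},-)$ and using the previous paragraph (which kills ${\rm EXT}_{R}^{t}(K_{n-1},A_{k})$ and ${\rm EXT}_{R}^{t+1}(K_{n-1},A_{k})$ for all $t\geq 1$) gives isomorphisms ${\rm EXT}_{R}^{t}(K_{n-1},M)\cong{\rm EXT}_{R}^{t+1}(K_{n-1},M_{1})\cong\cdots\cong{\rm EXT}_{R}^{t+k}(K_{n-1},M_{k})$ for every $k\geq 0$; choosing $k$ so large that $t+k$ exceeds the finite gr-projective dimension of $K_{n-1}$ forces the right-hand term to be $0$, whence ${\rm EXT}_{R}^{t}(K_{n-1},M)=0$ for all $t\geq 1$. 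Part (2) is proved the same way, with $N$ Gorenstein $n$-gr-flat, its complete $n$-gr-flat resolution $\mathbf{F}$, the cosyzygies $N^{k}=\ker(F^{k}\to F^{k+1})$, the isomorphisms ${\rm Tor}_{t}^{R}(N,K_{n-1})\cong{\rm Tor}_{t+k}^{R}(N^{k},K_{n-1})$, and gr-${\rm fd}_{R}(K_{n-1})<\infty$ to finish. The main obstacle, and really the only non-formal point, is fact (a): one must observe that the hypothesis, stated only for $n$-presented modules of finite gr-flat dimension, propagates to $m$-presentedness for every $m$, and it is precisely this that upgrades the built-in degree-$n$ vanishing of $n$-FP-gr-injectivity (resp.\ $n$-gr-flatness) to vanishing in all positive degrees.
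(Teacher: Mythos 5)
Your proof is correct and follows essentially the same route as the paper: truncate the appropriate half of the complete resolution, kill all positive-degree ${\rm EXT}$ (resp.\ ${\rm Tor}$) groups of $K_{n-1}$ against the $n$-FP-gr-injective (resp.\ $n$-gr-flat) terms, dimension-shift, and conclude from gr-${\rm pd}_{R}(K_{n-1})<\infty$ (resp.\ gr-${\rm fd}_{R}(K_{n-1})<\infty$). Your fact (a), showing that the hypothesis propagates to $m$-presentedness for all $m$, is exactly the justification the paper compresses into the single assertion that ${\rm EXT}_{R}^{i+1}(K_{n-1},E_{j})=0$ for all $i\geq 0$, so it is a welcome (but not divergent) elaboration.
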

\begin{proof}
(1)    Assume that $ K_{n-1}$ is a special gr-presented module in $R$-gr with gr-${\rm pd}_{R}(K_{n-1})\leq m$ respect to any $n$-presented module $U$ in $R$-gr. If  $M$ is a  Gorenstein $n$-FP-gr-injective left $R$-module,
then, there is a left $n$-FP-gr-injective resolution of $M$ in $R$-gr. So, we have:
$$0\longrightarrow N\longrightarrow E_{m-1}\longrightarrow \cdots \longrightarrow E_{0}\longrightarrow M\longrightarrow 0,$$ where every $E_j$ is $n$-FP-gr-injective for every $0 \leq j \leq m-1$.   Since gr-${\rm fd}_{R}(U)<\infty$, $U$ is $(n+1)$-presented, and so 
 ${\rm EXT}_{R}^{i+1}(K_{n-1}, E_j)= 0$ for any $i\geq 0$. Hence,
 ${\rm EXT}_{R}^{i+1}(K_{n-1},M)\cong{\rm EXT}_{R}^{m+i+1}(K_{n-1}, N)$, and since gr-${\rm pd}_{R}(K_{n-1})\leq m$, it follows that , ${\rm EXT}_{R}^{i+1}(K_{n-1},M)=0$ for any $i\geq 0$.

 (2) Assume that $ K_{n-1}$ is a special gr-presented module  in $R$-gr with gr-${\rm fd}_{R}(K_{n-1})\leq m$ for any $n$-presented module $U$ in $R$-gr.   If  $M$ is a  Gorenstein $n$-gr-flat right $R$-module,
then there is a right $n$-gr-flat resolution of $M$ in gr-$R$ of the form:
$$0\longrightarrow M\longrightarrow F^{0}\longrightarrow \cdots \longrightarrow F^{m-1}\longrightarrow N\longrightarrow 0,$$ where every $F^j$ is $n$-gr-flat for every $0 \leq j \leq m-1$.  Since $U$ is  $(n+1)$-presented, we have
 ${\rm Tor}_{i+1}^{R}(F^{j}, K_{n-1})= 0$ for any $i\geq 0$. If gr-${\rm fd}_{R}(K_{n-1})\leq m$, then
 ${\rm Tor}_{i+1}^{R}(M, K_{n-1})\cong{\rm Tor}_{m+i+1}^{R}(N, K_{n-1})=0$, and so ${\rm Tor}_{i+1}^{R}(M, K_{n-1})=0 $  for any $i\geq 0$.
 \end{proof}
 
\begin{thm}\label{thm1}
Let  $R$ be a graded ring. Then,  the following statements are equivalent:
\begin{enumerate}
\item [\rm (1)]
Every $n$-presented module in $R$-gr with gr-${\rm pd}_{R}(U)<\infty$ is $(n+1)$-presented;
\item [\rm (2)]
For every short exact sequence   $0 \rightarrow A\rightarrow B\rightarrow C\rightarrow 0$ in $R$-gr, $C$ is Gorenstein $n$-FP-gr-injective if $A$ and $B$ are  Gorenstein $n$-FP-gr-injective.
\end{enumerate}
\end{thm}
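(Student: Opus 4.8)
The plan is to prove the equivalence by establishing both implications, with the bulk of the work in showing $(1)\Rightarrow(2)$.

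\textbf{Strategy for $(2)\Rightarrow(1)$.} I would argue by contrapositive, or rather directly: assume $(2)$ holds and let $U$ be an $n$-presented graded module with $\text{gr-}\pd_R(U)<\infty$, with associated special short exact sequence $0\to K_n\to F_{n-1}\to K_{n-1}\to 0$. To show $U$ is $(n+1)$-presented it suffices (using the standard reduction, as in the excerpt's discussion of special finitely presented modules) to show that $K_n$ is finitely gr-presented, equivalently that one can continue the free resolution one more step with finitely generated frees. The idea is to cook up a short exact sequence $0\to A\to B\to C\to 0$ with $A,B$ Gorenstein $n$-FP-gr-injective but whose Gorenstein $n$-FP-gr-injectivity of $C$ forces the relevant $\Ext$-vanishing. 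A natural choice: embed a suitable $n$-FP-gr-injective module (or an arbitrary module) into a gr-injective module $E$, so $0\to M\to E\to E/M\to 0$; by Remark \ref{21}(1), $E$ and $M$ (chosen $n$-FP-gr-injective) are Gorenstein $n$-FP-gr-injective, hence so is $E/M$. Then unwinding the defining complex for $E/M$ against $K_{n-1}$ with $\text{gr-}\pd_R(K_{n-1})<\infty$ should yield that $\text{EXT}^1_R(K_{n-1},-)$ behaves well enough to deduce $(n+1)$-presentedness. I expect this direction to require a careful bookkeeping choice of $M$ and $E$ so that the only obstruction surfacing is exactly the $(n+1)$-presented condition.

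\textbf{Strategy for $(1)\Rightarrow(2)$.} Assume every $n$-presented module with finite gr-projective dimension is $(n+1)$-presented. Given $0\to A\to B\to C\to 0$ with $A,B$ Gorenstein $n$-FP-gr-injective, I need to build the doubly-infinite exact complex of $n$-FP-gr-injective modules around $C$ that is $\text{HOM}_R(K_{n-1},-)$-exact for all special gr-presented $K_{n-1}$ with $\text{gr-}\pd_R(K_{n-1})<\infty$. The left half (resolution $\cdots\to E_1\to E_0\to C$) and the right half (coresolution $C\to E^0\to E^1\to\cdots$) are constructed separately. For the right half: since $B$ is Gorenstein $n$-FP-gr-injective it admits $0\to B\to E^0_B\to E^1_B\to\cdots$; I would use the horseshoe-type argument together with the fact (this is where Lemma \ref{lem1}(1) enters, invoked under hypothesis $(1)$, noting $\text{gr-}\pd<\infty\Rightarrow\text{gr-}\fd<\infty$) that $\text{EXT}^t_R(K_{n-1},A)=0$ for all $t\ge1$, so the short exact sequence $0\to A\to B\to C\to 0$ does not obstruct lifting, and the cokernels stay Gorenstein $n$-FP-gr-injective by Proposition \ref{prop1}(1). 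For the left half: take an $n$-FP-gr-injective precover or simply a surjection from an $n$-FP-gr-injective (even gr-injective) module onto $C$ and splice; the kernel at each stage must be checked to be Gorenstein $n$-FP-gr-injective, again via Proposition \ref{prop1}(1) combined with the $\Ext$-vanishing from Lemma \ref{lem1}(1). Finally, exactness of $\text{HOM}_R(K_{n-1},\mathbf{A}_C)$ follows because at each spot the relevant $\text{EXT}^1_R(K_{n-1},\text{(kernel)})$ vanishes by Lemma \ref{lem1}(1), so the long exact sequence in $\text{EXT}$ collapses to give exactness of the $\text{HOM}$ complex.

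\textbf{Main obstacle.} The crux is the construction of the complete left--right resolution for $C$ and verifying that \emph{every} kernel and cokernel appearing in it is Gorenstein $n$-FP-gr-injective \emph{and} that the $\text{HOM}_R(K_{n-1},-)$-exactness is preserved at the gluing point where the left resolution meets the right coresolution. The hypothesis $(1)$ is precisely what guarantees, through Lemma \ref{lem1}, that $\text{EXT}^{\ge1}_R(K_{n-1},-)$ vanishes on all Gorenstein $n$-FP-gr-injective modules for the relevant $K_{n-1}$, which is the engine making the dimension-shifting work; without it, the complex around $C$ could fail to be $\text{HOM}$-exact at the seam. I would treat this seam carefully, using a short exact sequence $0\to C\to E^0\to C^1\to 0$ with $E^0$ (say gr-injective, hence $n$-FP-gr-injective) and $C^1$ shown Gorenstein $n$-FP-gr-injective by Proposition \ref{prop1}(1) applied to an auxiliary diagram, and then invoking Lemma \ref{lem1}(1) to kill $\text{EXT}^1_R(K_{n-1}, C^1)$ and its analogues, which forces exactness of $\text{HOM}_R(K_{n-1},-)$ across the junction.
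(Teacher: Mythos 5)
Your overall architecture matches the paper's (build a complete $n$-FP-gr-injective resolution of $C$ and use Lemma \ref{lem1} to get $\mathrm{HOM}_R(K_{n-1},-)$-exactness), but there is a genuine gap in your construction of the left half in $(1)\Rightarrow(2)$. You propose to take ``a surjection from an $n$-FP-gr-injective (even gr-injective) module onto $C$'' and check that its kernel is Gorenstein $n$-FP-gr-injective ``via Proposition \ref{prop1}(1)''. That proposition only says the \emph{middle} term of a short exact sequence with Gorenstein ends is Gorenstein; it does not let you conclude that the kernel of an epimorphism from an $n$-FP-gr-injective module onto a Gorenstein module is Gorenstein (that ``kernel direction'' is exactly what fails for injective-type Gorenstein classes, and proving it in general would presuppose the theorem). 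The paper avoids this by not choosing an arbitrary surjection: it takes the first step $0\to K\to B_0\to B\to 0$ of the defining left resolution of $B$ (so $K$ is Gorenstein by Remark \ref{21}(3)), forms the pullback $D$ of $B_0\to B\leftarrow A$, and reads off the two exact sequences $0\to K\to D\to A\to 0$ and $0\to D\to B_0\to C\to 0$. The first one is where Proposition \ref{prop1}(1) legitimately applies (both ends $K$ and $A$ are Gorenstein), giving $D$ Gorenstein; the second one, spliced with $D$'s own left resolution and a gr-injective coresolution of $C$, is the complete resolution. After that, Lemma \ref{lem1}(1) kills $\Ext^{t}$ of the kernels/cokernels against $K_{n-1}$ of finite gr-projective dimension exactly as you describe. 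Your ``horseshoe'' for the right half is also unnecessary and not obviously correct (you have coresolutions of $A$ and $B$, not of $A$ and $C$); a plain gr-injective coresolution of $C$ suffices.

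For $(2)\Rightarrow(1)$ your sketch stops precisely where the work is. The paper's argument is: for an \emph{ungraded} Gorenstein $n$-FP-injective module $M$ with $0\to M\to E\to L\to 0$, $E$ injective, apply the right adjoint $F$ of the forgetful functor; Proposition \ref{prop2} makes $F(M)$ and $F(E)$ Gorenstein $n$-FP-gr-injective, so $(2)$ forces $F(L)$ to be Gorenstein $n$-FP-gr-injective, whence $\Ext^{1}_{R\text{-}\mathrm{gr}}(K_{n-1},F(L))=0$ (using gr-$\pd_R(K_{n-1})<\infty$). Two dimension shifts give $\Ext^{1}_{R\text{-}\mathrm{gr}}(K_n,F(M))\cong\Ext^{2}_{R\text{-}\mathrm{gr}}(K_{n-1},F(M))\cong\Ext^{1}_{R\text{-}\mathrm{gr}}(K_{n-1},F(L))=0$, and adjointness turns this into $\Ext^{1}_{R}(K_n,M)=0$. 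The step you never identify is the conclusion: since every FP-injective module is Gorenstein $n$-FP-injective, $\Ext^{1}_{R}(K_n,N)=0$ for \emph{all} FP-injective $N$, and a finitely generated module with this property is finitely presented; hence $K_n$ is finitely presented and $U$ is $(n+1)$-presented. Without singling out the class of FP-injective test modules and this finiteness criterion, ``EXT behaves well enough to deduce $(n+1)$-presentedness'' does not constitute a proof.
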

\begin{proof}
$(1)\Longrightarrow (2)$
If $B$ is a Gorenstein $n$-FP-gr-injective module in $R$-gr, then by Definition \ref{2.2} and Remark \ref{21}, there is an exact sequence $0 \rightarrow K\rightarrow B_0\rightarrow B\rightarrow 0$ in $R$-gr, where $B_0$ is $n$-FP-gr-injective and $K$ is Gorenstein $n$-FP-gr-injective. Consider the following commutative diagram with exact rows exists:
$$\xymatrix{
 &  0 \ar[d] & 0\ar[d]& &  \\
0 \ar[r]& K \ar@{=}[r]\ar[d]&K\ar[d]&& \\
0 \ar[r] & D\ar[r]\ar[d]& B_{0}\ar[r]\ar[d]&C\ar[r]\ar@{=}[d]& 0 \\
0 \ar[r]& A\ar[r]\ar[d]& B\ar[r]\ar[d]&C \ar[r]\ar[d]& 0 \\
 & 0  & 0  &0 & \\
}$$
By Proposition \ref{prop1}(1), $D$ is Gorenstein $n$-FP-gr-injective, and so we have a  commutative diagram in  $R$-gr:
\begin{displaymath}\xymatrix{
   \cdots\ar[r] & D_{1} \ar[rr]\ar[dr]&& D_{0} \ar[rr]\ar[dr]&& B_{0} \ar[rr]\ar[rd]&& E^{0}\ar[rr]\ar[rd]&& E^{1}\ar[r]&\cdots,\\
     & &  L_{0}\ar[ur] \ar[rd] && D\ar[ur]\ar[rd] & &C \ar[ur] \ar[rd] & &L^{1} \ar[ur] \ar[rd]\\
      & 0\ar[ur] && 0 \ar[ur]        && 0\ar[ur]          && 0\ar[ur]   &&  0}
\end{displaymath}
where $D_i$ and $B_{0}$ are $n$-FP-gr-injective, $E^{i}$ is gr-injective, $C={\rm Ker}(E^{0}\rightarrow E^{1})$, $D={\rm Ker}(B_{0}\rightarrow C)$, $L_{i}={\rm Ker}(D_{i}\rightarrow D_{i-1})$ and $L^{i}={\rm Ker}(E^{i}\rightarrow E^{i+1})$.
By Remark \ref{21}, $E^{i}$ and $L_{i}$ are Gorenstein $n$-FP-gr-injective and hence by Lemma \ref{lem1}(1),  ${\rm EXT}_{R}^{t}(K_{n-1}, L_{i})={\rm EXT}_{R}^{t}(K_{n-1}, D)=0$ for any special gr-presented $K_{n-1}$ module in $R$-gr with gr-${\rm pd}_{R}(K_{n-1})<\infty$. Therefore,
we have the following exact commutative diagram :
\begin{displaymath}\xymatrix{
  \Hom(K_{n-1},D_{1}) \ar[rr]\ar[dr]&& \Hom(K_{n-1},D_{0}) \ar[rr]\ar[rd]&& \cdots \\
   & \Hom(K_{n-1},L_{0}) \ar[ur] \ar[rd] & &\Hom(K_{n-1},D) \ar[ur] \ar[rd] \\
   0\ar[ur] &&        0\ar[ur]  && 0}
\end{displaymath}
Hence, $C$ is Gorenstein $n$-FP-gr-injective.

$(2)\Longrightarrow (1)$ Let $U$ be an $n$-presented graded left $R$-module with gr-${\rm pd}_{R}(U)<\infty$, and let $0 \rightarrow K_{n}\rightarrow F_{n-1}\rightarrow K_{n-1}\rightarrow 0$ be a special short exact sequence in $R$-gr with respect to $U$, where $K_{n}$ is a special gr-generated module. We show that $K_n$ is special gr-presented.
 Let $M$ be a Gorenstein $n$-FP-injective module and $0 \rightarrow M\rightarrow E\rightarrow L\rightarrow 0$ an exact sequence in $R$-Mod, where $E$ is injective. Then, $0 \rightarrow F(M)\rightarrow F(E)\rightarrow F(L)\rightarrow 0$ is exact, where $F(M)$ and $F(E)$ are Gorenstein $n$-FP-gr-injective in $R$-gr by Proposition \ref{prop2}. So by (2), we deduce that $F(L)$ is Gorenstein $n$-FP-gr-injective.
We have:
$$0={\rm Ext}_{R-{\rm gr}}^{1}(F_{n-1}, F(M))\longrightarrow{\Ext}_{R-{\rm gr}}^{1}(K_{n},F(M))\longrightarrow{\rm Ext}_{R-{\rm gr}}^{2}(K_{n-1},F(M))\longrightarrow 0.$$ So, ${\Ext}_{R-{\rm gr}}^{1}( K_{n},F(M))\cong{\rm Ext}_{R-{\rm gr}}^{2}(K_{n-1},F(M)).$
On the other hand,
$$0={\rm Ext}_{R-{\rm gr}}^{1}(K_{n-1},F(E))\longrightarrow{\rm Ext}_{R-{\rm gr}}^{1}(K_{n-1},F(L))\longrightarrow{\rm Ext}_{R-{\rm gr}}^{2}(K_{n-1},F(M))\longrightarrow0.$$ Hence, ${\rm Ext}_{R-{\rm gr}}^{1}(K_{n-1},F(L))\cong{\rm Ext}_{R-{\rm gr}}^{2}(K_{n-1},F(M))$.  Since $F(L)$ is Gorenstein $n$-FP-gr injective,  we get
$0={\rm EXT}_{R}^{1}(K_{n-1},F(L))_{\sigma}\cong{\rm Ext}_{R-{\rm gr}}^{1}(K_{n-1},F(L)({\sigma}))$ for any $\sigma\in G$. This implies that ${\rm Ext}_{R-{\rm gr}}^{1}(K_{n-1},F(L))=0$ and consequently ${\Ext}_{R-{\rm gr}}^{1}( K_{n},F(M))=0$.
So, the following commutative diagram exists:
$$\xymatrix{
 0\ar[r] & {\rm Hom}_{R-{\rm gr}}(K_n,F(M)) \ar[r]\ar[d]^{\cong}& {\rm Hom}_{R-{\rm gr}}(K_n,F(E))\ar[r]\ar[d]^{\cong}&{\rm Hom}_{R-{\rm gr}}(K_n,F(L)) \ar[r]\ar[d]^{\cong}& 0 \\
0 \ar[r]& {\rm Hom}_{R}(K_n,M)\ar[r]&  {\rm Hom}_{R}(K_n,E) \ar[r]&{\rm Hom}_{R}(K_n,L)
 \\
}$$
So, ${\Ext}_{R-{\rm gr}}^{1}(K_n,F(M))\cong{\Ext}_{R}^{1}(K_n,M)=0$ for any Gorenstein $n$-FP-injective left $R$-module $M$. Since every FP-injective left module is Gorenstein $n$-FP-injective,  ${\Ext}_{R-{\rm gr}}^{1}(K_n,F(N))\cong{\Ext}_{R}^{1}(K_n,N)=0$ for any FP-injective left module $N$ and so $K_n$ is $1$-presented. Therefore, $U$ is $(n+1)$-presented in $R$-gr.
\end{proof}

\begin{cor}\label{cor1}
Let every $n$-presented module in $R$-gr with gr-${\rm pd}_{R}(U)<\infty$ be $(n+1)$-presented. Then, 
 a module $M$ in $R$-gr is Gorenstein $n$-FP-gr-injective if and only if every gr-pure submodule and any gr-pure epimorphic image of $M$ are Gorenstein $n$-FP-gr-injective.
\end{cor}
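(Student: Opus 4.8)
The plan is to treat the two implications separately, the substance being entirely in one of them. The ``if'' direction is immediate: $M$ is a gr-pure submodule of itself (the identity map is a gr-pure monomorphism), so the hypothesis that every gr-pure submodule of $M$ is Gorenstein $n$-FP-gr-injective forces $M$ itself to be Gorenstein $n$-FP-gr-injective. For the ``only if'' direction, fix a gr-pure exact sequence $0\to N\to M\to M/N\to 0$ with $M$ Gorenstein $n$-FP-gr-injective; here $N$ is an arbitrary gr-pure submodule and $M/N$ an arbitrary gr-pure epimorphic image. The quotient case reduces to the submodule case: once $N$ is known to be Gorenstein $n$-FP-gr-injective, Theorem \ref{thm1} applied to $0\to N\to M\to M/N\to 0$ (a short exact sequence with $N$ and $M$ Gorenstein $n$-FP-gr-injective) shows $M/N$ is Gorenstein $n$-FP-gr-injective. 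So everything reduces to: a gr-pure submodule $N$ of a Gorenstein $n$-FP-gr-injective module $M$ is Gorenstein $n$-FP-gr-injective.

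For this I would first establish a working characterization, valid under our hypothesis: a module $X$ in $R$-gr is Gorenstein $n$-FP-gr-injective if and only if ${\rm EXT}_{R}^{i}(K_{n-1},X)=0$ for all $i\geq 1$ and every special gr-presented $K_{n-1}$ with gr-${\rm pd}_{R}(K_{n-1})<\infty$, and $X$ admits an exact left resolution by $n$-FP-gr-injective modules that stays exact under ${\rm HOM}_{R}(K_{n-1},-)$ for all such $K_{n-1}$. The point is that the right half of a complete resolution is then essentially free: given the ${\rm EXT}$-vanishing, a gr-injective coresolution $0\to X\to E^{0}\to E^{1}\to\cdots$ has all cosyzygies $Z^{j}$ satisfying ${\rm EXT}_{R}^{1}(K_{n-1},Z^{j})\cong{\rm EXT}_{R}^{j+1}(K_{n-1},X)=0$, hence is ${\rm HOM}_{R}(K_{n-1},-)$-proper, and it splices with the left resolution into a complete one. (Lemma \ref{lem1}(1) is usable here because gr-${\rm pd}_{R}(K_{n-1})<\infty$ forces the associated $n$-presented module $U$ to have finite gr-projective dimension, hence to be $(n+1)$-presented under our hypothesis.)

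It then remains to verify both conditions for $N$. For the ${\rm EXT}$-vanishing, the preliminary remark is that under our hypothesis every $n$-presented graded module of finite gr-projective dimension is $FP_{\infty}$: iterating the hypothesis on syzygies, each syzygy is again $n$-presented of finite gr-projective dimension, hence $(n+1)$-presented, and in particular finitely presented. So a special gr-presented $K_{n-1}$ with gr-${\rm pd}_{R}(K_{n-1})<\infty$ has a finitely generated free resolution all of whose syzygies are finitely presented; since $0\to N\to M\to M/N\to 0$ is gr-pure, ${\rm HOM}_{R}(F,M)\to{\rm HOM}_{R}(F,M/N)$ is surjective for every finitely presented graded $F$, and combining this (applied to $K_{n-1}$ and its syzygies) with ${\rm EXT}_{R}^{i}(K_{n-1},M)=0$ for all $i\geq 1$ (Lemma \ref{lem1}(1)) and a dimension shift along the long exact sequence of $0\to N\to M\to M/N\to 0$ gives ${\rm EXT}_{R}^{i}(K_{n-1},N)=0$ for all $i\geq 1$. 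For the left resolution, I would take the left $n$-FP-gr-injective resolution $\cdots\to A_{1}\to A_{0}\to M\to 0$ from a complete resolution of $M$ and pull it back along $N\hookrightarrow M$: since $\Im(A_{1}\to A_{0})=\ker(A_{0}\to M)$ lies in the pullback $B_{0}:=A_{0}\times_{M}N$, one obtains an exact sequence $\cdots\to A_{2}\to A_{1}\to B_{0}\to N\to 0$ with the $A_{i}$ still $n$-FP-gr-injective, together with a short exact sequence $0\to Z_{0}\to B_{0}\to N\to 0$ in which $Z_{0}=\ker(A_{0}\to M)$ is Gorenstein $n$-FP-gr-injective by Remark \ref{21}(3); resolving $B_{0}$ by $n$-FP-gr-injective modules and splicing (the intervening syzygies being Gorenstein $n$-FP-gr-injective, so that ${\rm HOM}_{R}(K_{n-1},-)$-properness at the junction follows from the ${\rm EXT}$-vanishing and Lemma \ref{lem1}(1)) produces the required left resolution of $N$.

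The main obstacle is exactly this last construction: the pullback $B_{0}$ is in general neither $n$-FP-gr-injective nor gr-pure in $A_{0}$ (the quotient $A_{0}/B_{0}\cong M/N$ need not split off an $n$-FP-gr-injective summand), so one cannot simply restrict the resolution of $M$. I expect to deal with this by exploiting that $n$-FP-gr-injective modules are precovering in $R$-gr under our hypothesis, resolving $B_{0}$ on its own and performing a horseshoe-type splicing that keeps all syzygies inside $\mathcal{G}_{gr-\mathcal{FI}_n}$, so that the ${\rm HOM}_{R}(K_{n-1},-)$-properness conditions become automatic. The verification of the ${\rm EXT}$-vanishing, by contrast, is routine once the $FP_{\infty}$ observation and Lemma \ref{lem1}(1) are in hand.
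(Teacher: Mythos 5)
Your overall architecture is fine and, up to a point, parallels the paper: the ``if'' direction is trivial, the quotient case reduces to the submodule case via Theorem \ref{thm1}, and your ${\rm EXT}$-vanishing computation for a gr-pure submodule $N$ (purity gives surjectivity of ${\rm HOM}_R(-,M)\to{\rm HOM}_R(-,M/N)$ on $K_{n-1}$ and its finitely presented syzygies, Lemma \ref{lem1}(1) kills the $M$-terms, and dimension shifting yields ${\rm EXT}^i_R(K_{n-1},N)=0$) is sound, as is your $FP_\infty$ observation. But the proof is not complete: under your own characterization, Gorenstein $n$-FP-gr-injectivity of $N$ also requires an exact left $n$-FP-gr-injective resolution of $N$ that stays exact under ${\rm HOM}_R(K_{n-1},-)$, and this is exactly what you do not construct. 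You correctly diagnose that the pullback $B_0=A_0\times_M N$ fails, but the proposed repair is not an argument: precovering of the class $gr$-$\mathcal{FI}_n$ is nowhere established under the corollary's hypothesis (the paper proves covering/preenveloping only for the Gorenstein classes, and only over $n$-gr-coherent rings); even granted precovers, they need not be epimorphisms, so they do not produce an exact left resolution of an arbitrary $B_0$; and the claim that the intermediate syzygies lie in $\mathcal{G}_{gr-\mathcal{FI}_n}$ (to get properness ``automatically'') is circular, since membership of such kernels in $\mathcal{G}_{gr-\mathcal{FI}_n}$ is the kind of statement being proved. So the decisive half of the hard direction is left open.

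The paper's own proof sidesteps this entirely and is much shorter: for a gr-pure exact sequence $0\to K\to M\to M/K\to 0$ it invokes the purity characterization of \cite[Proposition 2.2]{MJP} to get ${\rm EXT}^1_R(K_{n-1},K)=0$ for special gr-presented modules $K_{n-1}$, hence (via ${\rm EXT}^1_R(K_{n-1},K)\cong{\rm EXT}^n_R(U,K)$) concludes that the gr-pure submodule $K$ is already $n$-FP-gr-injective, and therefore Gorenstein $n$-FP-gr-injective by Remark \ref{21}(1); Theorem \ref{thm1} then handles the quotient, exactly as in your reduction. In other words, the missing ingredient in your plan is precisely the paper's claim that gr-pure submodules land in $gr$-$\mathcal{FI}_n\subseteq\mathcal{G}_{gr-\mathcal{FI}_n}$, which makes the resolution problem you got stuck on disappear; whatever one thinks of how the paper justifies that claim, your proposal in its current form does not supply a substitute for it.
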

\begin{proof}
$(\Longrightarrow)$ Let $M$ be a Gorenstein $n$-FP-gr-injective module  in $R$-gr. If the exact sequence $0\rightarrow K\rightarrow M\rightarrow \frac{M}{K}\rightarrow 0$ is gr-pure, then by \cite[Proposition 2.2]{MJP}, ${\rm EXT}_{R}^1(K_{n-1}, K)=0$ for every special gr-presented module $K_{n-1}$ in $R$-gr. So, we have
 $0={\rm EXT}_{R}^1(K_{n-1}, K)\cong{\rm EXT}_{R}^n(U, K)$ for any $n$-presented module $U$ in $R$-gr. Thus, $K$ is $n$-FP-gr-injective, and hence $K$ is Gorenstein $n$-FP-gr-injective by Remark \ref{21}. Therefore, by Theorem \ref{thm1},  $\frac{M}{K}$ is  Gorenstein $n$-FP-gr-injective.

 $(\Longleftarrow)$ Assume that the exact sequence $0\rightarrow K\rightarrow M\rightarrow L\rightarrow 0$ in $R$-gr is gr-pure, where $L$ and $K$ are Gorenstein $n$-FP-gr-injective.
Then, by Proposition \ref{prop1}(1),  $M$ is  Gorenstein $n$-FP-gr-injective.
\end{proof}

The following definition is the graded version of \cite{EOO, L.DD}.

\begin{Def}\label{2.pa}
Let $\eth$ be a class of graded left $R$-module. Then:
\begin{enumerate}
\item [\rm (1)]
${\eth}^{\bot}={\rm Ker Ext}_{R-{\rm gr}}^{1}(\eth, -)=\{C \mid {\rm Ext}_{R-{\rm gr}}^{1}(L, C)=0 \ \ for \ \ any \ \ L\in\eth\}$.
\item [\rm (2)]
$^{\bot}{\eth}={\rm Ker Ext}_{R-{\rm gr}}^{1}(-, \eth)=\{C \mid {\rm Ext}_{R-{\rm gr}}^{1}(C, L)=0 \ \ for \ \ any \ \ L\in\eth\}$.

 A pair $(\F, \mathcal{C})$ of classes of graded $R$-modules is  called a cotorsion theory, if $\F^{\bot}= \mathcal{C}$ and $\F= {^{\bot}\mathcal{C}}$.
A cotorsion theory $(\F, \mathcal{C})$ is called hereditary,  if whenever $0 \rightarrow F^{'}\rightarrow F\rightarrow F^{''}\rightarrow 0$ is exact in $R$-gr with $F, F^{''}\in \F$ then $F^{'}$ is also in $\F$, or equivalently, if $0 \rightarrow C^{'}\rightarrow C\rightarrow C^{''}\rightarrow 0$ is an exact sequence in $R$-gr with $C, C^{'}\in \mathcal{C}$, then $C^{''}$ is also in $\mathcal{C}$.
\end{enumerate}
\end{Def}
\begin{cor}\label{2.vc}
Let  $R$ be a graded ring. Then,  the following statements are equivalent:
\begin{enumerate}
\item [\rm (1)]
$(^{\bot}\mathcal{G}_{gr-\mathcal{FI}_n}, \mathcal{G}_{gr-\mathcal{FI}_n})$ is a hereditary cotorsion pair;
\item [\rm (2)]
Every $n$-presented
module in $R$-gr with  gr-${\rm pd}(U)<\infty$ is $(n + 1)$-presented and every $M\in(^{\bot}\mathcal{G}_{gr-\mathcal{FI}_n})^{\bot}$ has an exact left ($gr$-$\mathcal{FI}_{n}$)-resolution.
\end{enumerate}
\end{cor}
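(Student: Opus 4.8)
The plan is to establish the two implications separately, after the following reduction. For any class $\mathcal{C}$ of graded modules the pair $({}^{\bot}\mathcal{C},({}^{\bot}\mathcal{C})^{\bot})$ is automatically a cotorsion pair in the sense of Definition~\ref{2.pa}, so $({}^{\bot}\mathcal{G}_{gr-\mathcal{FI}_n},\mathcal{G}_{gr-\mathcal{FI}_n})$ is a cotorsion pair precisely when $({}^{\bot}\mathcal{G}_{gr-\mathcal{FI}_n})^{\bot}=\mathcal{G}_{gr-\mathcal{FI}_n}$; and, granting this, Definition~\ref{2.pa} says it is hereditary exactly when $\mathcal{G}_{gr-\mathcal{FI}_n}$ is closed under cokernels of monomorphisms, which by Theorem~\ref{thm1} is equivalent to the condition ``every $n$-presented module in $R$-gr with gr-$\pd_R(U)<\infty$ is $(n+1)$-presented''. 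Thus both (1) and (2) contain this $(n+1)$-presentation condition, and it remains to show, \emph{assuming it}, that $({}^{\bot}\mathcal{G}_{gr-\mathcal{FI}_n})^{\bot}=\mathcal{G}_{gr-\mathcal{FI}_n}$ if and only if every $M\in({}^{\bot}\mathcal{G}_{gr-\mathcal{FI}_n})^{\bot}$ has an exact left ($gr$-$\mathcal{FI}_n$)-resolution. One direction is immediate: the inclusion $\mathcal{G}_{gr-\mathcal{FI}_n}\subseteq({}^{\bot}\mathcal{G}_{gr-\mathcal{FI}_n})^{\bot}$ always holds, and if equality holds then any $M\in({}^{\bot}\mathcal{G}_{gr-\mathcal{FI}_n})^{\bot}$ is Gorenstein $n$-FP-gr-injective, so the left half of the complex furnished by Definition~\ref{2.2} is the desired exact left ($gr$-$\mathcal{FI}_n$)-resolution.

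For the converse, fix $M\in({}^{\bot}\mathcal{G}_{gr-\mathcal{FI}_n})^{\bot}$; I want to show $M\in\mathcal{G}_{gr-\mathcal{FI}_n}$. The first step is a vanishing statement: for every special gr-presented $K_{n-1}$ with gr-$\pd_R(K_{n-1})<\infty$ one has ${\rm EXT}_{R}^{t}(K_{n-1},M)=0$ for all $t\geq1$. Indeed, Lemma~\ref{lem1}(1)---whose hypothesis is supplied by the $(n+1)$-presentation condition, since finite gr-projective dimension of $K_{n-1}$ forces finite gr-projective dimension of the associated $n$-presented module---gives ${\rm EXT}_{R}^{1}(K_{n-1},L)=0$ for every Gorenstein $n$-FP-gr-injective $L$; reading this off in each graded component shows $K_{n-1}\in{}^{\bot}\mathcal{G}_{gr-\mathcal{FI}_n}$, so that ${\rm Ext}_{R-{\rm gr}}^{1}(K_{n-1},M(\sigma))=0$ for every $\sigma\in G$ (note $M(\sigma)\in({}^{\bot}\mathcal{G}_{gr-\mathcal{FI}_n})^{\bot}$, all relevant classes being stable under suspension), i.e. ${\rm EXT}_{R}^{1}(K_{n-1},M)=0$; finally, dimension shifting along the special short exact sequences attached to $K_{n-1}$---whose successive kernels are again special gr-presented of finite gr-projective dimension, by the $(n+1)$-presentation condition---upgrades this to ${\rm EXT}_{R}^{t}(K_{n-1},M)=0$ for all $t\geq1$.

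Next I would assemble a complete resolution around $M$. By hypothesis there is an exact sequence $\cdots\to A_1\to A_0\to M\to0$ with each $A_i\in gr$-$\mathcal{FI}_n$; since $gr$-$\mathcal{I}\subseteq gr$-$\mathcal{FI}_n$ by Remark~\ref{21}, pick also a gr-injective coresolution $0\to M\to E^0\to E^1\to\cdots$ and splice the two into an exact complex $\mathbf{A}$ of $n$-FP-gr-injective modules with $M=\ker(E^0\to E^1)$. It then suffices to verify that ${\rm HOM}_{R}(K_{n-1},\mathbf{A})$ is exact whenever $K_{n-1}$ is special gr-presented with gr-$\pd_R(K_{n-1})<\infty$. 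On the coresolution part this is the usual dimension-shift argument starting from ${\rm EXT}_{R}^{\geq1}(K_{n-1},M)=0$ and ${\rm EXT}_{R}^{\geq1}(K_{n-1},E^i)=0$. On the resolution part, set $Z_0=M$ and $Z_{i+1}=\ker(A_i\to Z_i)$; each $A_i$ is $n$-FP-gr-injective, hence Gorenstein $n$-FP-gr-injective by Remark~\ref{21}, so Lemma~\ref{lem1}(1) gives ${\rm EXT}_{R}^{\geq1}(K_{n-1},A_i)=0$, and the sequences $0\to Z_{i+1}\to A_i\to Z_i\to0$ yield ${\rm EXT}_{R}^{1}(K_{n-1},Z_i)\cong{\rm EXT}_{R}^{2}(K_{n-1},Z_{i+1})\cong\cdots\cong{\rm EXT}_{R}^{d+1}(K_{n-1},Z_{i+d})$, where $d=\mathrm{gr}$-$\pd_R(K_{n-1})$; the last group is $0$ because $d+1>d$, so ${\rm EXT}_{R}^{1}(K_{n-1},Z_i)=0$ for all $i\geq0$. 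Consequently each map ${\rm HOM}_{R}(K_{n-1},A_i)\to{\rm HOM}_{R}(K_{n-1},Z_i)$ is surjective, which is exactly the exactness of ${\rm HOM}_{R}(K_{n-1},\mathbf{A})$ along the resolution and at the splicing point $A_0\to E^0$. Hence $M$ is Gorenstein $n$-FP-gr-injective, which finishes the converse and the whole equivalence (Proposition~\ref{prop1}(1) enters only through Theorem~\ref{thm1}).

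I expect the real obstacle to be the exactness of ${\rm HOM}_{R}(K_{n-1},\mathbf{A})$ on the resolution side: the hypothesis provides only an \emph{exact} left ($gr$-$\mathcal{FI}_n$)-resolution, not an ${\rm HOM}_{R}(K_{n-1},-)$-proper one, so the surjectivity needed there is not formal. What makes it work is the finiteness of gr-$\pd_R(K_{n-1})$, which lets one transport the possibly nonzero ${\rm EXT}^{1}$ a bounded number of syzygy-steps along the resolution until it sits above the projective dimension and therefore vanishes; the remaining points---matching things up at the splicing point, the passage between ${\rm EXT}_{R}$ and its graded components, and checking that the syzygies $Z_i$ and the kernels of the special sequences stay special gr-presented of finite gr-projective dimension---are routine bookkeeping.
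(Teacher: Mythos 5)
Your proof is correct and follows essentially the same route as the paper's: both directions reduce to the double-perp equality $(^{\bot}\mathcal{G}_{gr-\mathcal{FI}_n})^{\bot}=\mathcal{G}_{gr-\mathcal{FI}_n}$ together with Theorem~\ref{thm1} for the hereditary/$(n+1)$-presentation part, and the key implication $(2)\Rightarrow(1)$ is in both cases obtained by splicing the given exact left ($gr$-$\mathcal{FI}_n$)-resolution with a gr-injective coresolution and checking ${\rm HOM}_{R}(K_{n-1},-)$-exactness using the finiteness of gr-$\pd_{R}(K_{n-1})$. Your write-up merely makes explicit the dimension-shifting that the paper delegates to a citation of Rotman and an ``inductive presumption,'' and invokes Theorem~\ref{thm1} directly where the paper reruns that argument through the functor $F$.
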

\begin{proof}
$(1)\Longrightarrow(2)$
 Let $M$ be a Gorenstein $n$-FP-injective left $R$-module and $0 \rightarrow M\rightarrow E\rightarrow L\rightarrow 0$ an exact sequence in $R$-Mod, where $E$ is injective. Then, $0 \rightarrow F(M)\rightarrow F(E)\rightarrow F(L)\rightarrow 0$ is exact in $R$-gr, where $F(M)$ and $F(E)$ are Gorenstein $n$-FP-gr-injective by Proposition \ref{prop2}. So by hypothesis, $F(L)$ is  Gorenstein $n$-FP-gr-injective.  If $U$ is an $n$-presented graded left $R$-module with gr-${\rm pd}_{R}(U)<\infty$, then similar to the proof  $(2)\Longrightarrow(1)$ of Theorem \ref{thm1}, it follows that $U$ is $(n+1)$-presented.
 Since $(^{\bot}\mathcal{G}_{gr-\mathcal{FI}_n})^{\bot}=\mathcal{G}_{gr-\mathcal{FI}_n}$  and every $N\in\mathcal{G}_{gr-\mathcal{FI}_n}$ has an left exact  ($gr$-$\mathcal{FI}_{n}$)-resolution, then  $M\in(^{\bot}\mathcal{G}_{gr-\mathcal{FI}_n})^{\bot}$ as well.

$(2)\Longrightarrow(1)$ Note that we have to show that $(^{\bot}\mathcal{G}_{gr-\mathcal{FI}_n})^{\bot}=\mathcal{G}_{gr-\mathcal{FI}_n}$.  If
$M\in(^{\bot}\mathcal{G}_{gr-\mathcal{FI}_n})^{\bot}$, then
 $n$-FP-gr-injective resolution $ \cdots\rightarrow A_{3}\rightarrow A_{1}\rightarrow A_{0}\rightarrow M\rightarrow 0$  of $M$ in $R$-gr exists. Also, we have an  exact sequence $ 0\rightarrow M\rightarrow E_{0}\rightarrow E_{1}\rightarrow\cdots$ in $R$-gr, where any $E_{i}$ is gr-injective. So,  there exists an exact sequence
$$\Y:  \cdots\longrightarrow A_{3}\longrightarrow A_{1}\longrightarrow A_{0}\longrightarrow E_{0}\longrightarrow E_{1}\longrightarrow\cdots$$
of $n$-FP-gr-injective modules in $R$-gr with $M={\rm Ker}(E_{0}\rightarrow E_{1})$. Let $0\rightarrow K_{n}\rightarrow F_{n-1}\rightarrow K_{n-1}\rightarrow 0$ be a special short exact sequence in $R$-gr with gr-${\pd}_{R}(K_{n-1})<\infty$. Then, by hypothesis, $K_{n}$ is a  gr-presented module  with gr-${\pd}_{R}(K_{n})<\infty$. So, by \cite[Theorem 6.10]{Rot2} and by using the inductive presumption on gr-${\pd}_{R}(K_{n-1})$, we deduce that ${\rm HOM}_{R}(K_{n-1}, \Y)$ is exact. Thus, $M$ is Gorenstein $n$-FP-gr-injecive and hence $M\in\mathcal{G}_{gr-\mathcal{FI}_n}$.

Now, if $0\rightarrow A\rightarrow B\rightarrow C\rightarrow 0$ is a short exact sequence in
$R$-gr, where $A,B\in\mathcal{G}_{gr-\mathcal{FI}_n}$, then by Theorem \ref{thm1}, $C\in\mathcal{G}_{gr-\mathcal{FI}_n}$. Hence,  the pair $(^{\bot}\mathcal{G}_{gr-\mathcal{FI}_n}, \mathcal{G}_{gr-\mathcal{FI}_n})$ is a hereditary cotorsion pair.
\end{proof}

\begin{prop}\label{prop23}
Assume that every $n$-presented module in $R$-gr with gr-${\rm fd}_{R}(U)<\infty$ is $(n+1)$-presented. Then, 
for every short exact sequence $0 \rightarrow A\rightarrow B\rightarrow C\rightarrow 0$ in gr-$R$, $A$ is Gorenstein $n$-gr-flat if $B$ and $C$ are Gorenstein $n$-gr-flat.
\end{prop}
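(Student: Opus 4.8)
The plan is to dualize the proof of the implication $(1)\Rightarrow(2)$ of Theorem~\ref{thm1}, trading $n$-FP-gr-injective modules, $\mathrm{HOM}_R$ and $\mathrm{EXT}_R$ for $n$-gr-flat modules, $-\otimes_R-$ and $\mathrm{Tor}^R$, and swapping the roles of submodule and quotient. First, since $B$ is Gorenstein $n$-gr-flat, split off the top of one of its complete $n$-gr-flat resolutions to obtain a short exact sequence $0\to B\to F^0\to B^1\to 0$ in gr-$R$ with $F^0$ an $n$-gr-flat module and $B^1$ again Gorenstein $n$-gr-flat (Remark~\ref{21}(3)). Composing the inclusions $A\hookrightarrow B\hookrightarrow F^0$ and putting $D:=F^0/A$, the snake lemma produces a short exact sequence $0\to C\to D\to B^1\to 0$; as $C$ (by hypothesis) and $B^1$ are Gorenstein $n$-gr-flat, Proposition~\ref{prop1}(2) shows that $D$ is Gorenstein $n$-gr-flat. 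Thus there is an exact sequence $0\to A\to F^0\to D\to 0$ with $F^0$ $n$-gr-flat and $D$ Gorenstein $n$-gr-flat.

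Next, iterate this ``embed into an $n$-gr-flat module with Gorenstein $n$-gr-flat cokernel'' step on $D$ and on its successive cokernels, splicing the results into an exact sequence $0\to A\to F^0\to G^1\to G^2\to\cdots$ in gr-$R$ in which each $G^i$ is $n$-gr-flat and each $\ker(G^i\to G^{i+1})$ is Gorenstein $n$-gr-flat. On the other side, take a graded free (hence $n$-gr-flat, by Remark~\ref{21}(1)) resolution $\cdots\to F_1\to F_0\to A\to 0$ and splice it onto $A\hookrightarrow F^0$, producing an exact complex
$$\mathbf{F}:\ \cdots\longrightarrow F_1\longrightarrow F_0\longrightarrow F^0\longrightarrow G^1\longrightarrow G^2\longrightarrow\cdots$$
of $n$-gr-flat modules in gr-$R$ with $A=\ker(F^0\to G^1)$, of exactly the shape demanded by Definition~\ref{2.2}. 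It then remains to verify that $\mathbf{F}\otimes_R K_{n-1}$ is exact for every special gr-presented module $K_{n-1}$ in $R$-gr with gr-$\mathrm{fd}_R(K_{n-1})<\infty$. Viewing $\mathbf{F}$ as a splice of short exact sequences $0\to W'\to P\to W\to 0$ with $W,W'$ syzygies and $P$ a term of $\mathbf{F}$, this reduces to showing that $\mathrm{Tor}^R_1(W,K_{n-1})=0$ for every syzygy $W$. The syzygies lying to the right, namely $D,D^1,D^2,\dots$, are Gorenstein $n$-gr-flat, so $\mathrm{Tor}^R_1(W,K_{n-1})=0$ by Lemma~\ref{lem1}(2); likewise each term $P$, being $n$-gr-flat and hence Gorenstein $n$-gr-flat (Remark~\ref{21}(1)), has $\mathrm{Tor}^R_1(P,K_{n-1})=0$. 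For the syzygy $A$ and for the $i$-th syzygy $\Omega_i$ of $A$ coming from the free resolution, dimension shifting gives $\mathrm{Tor}^R_1(\Omega_i,K_{n-1})\cong\mathrm{Tor}^R_{i+1}(A,K_{n-1})$, and $\mathrm{Tor}^R_t(A,K_{n-1})=0$ for all $t\ge1$ follows at once from the long exact $\mathrm{Tor}$-sequence of $0\to A\to B\to C\to 0$ together with Lemma~\ref{lem1}(2) applied to the Gorenstein $n$-gr-flat modules $B$ and $C$. Hence $\mathbf{F}\otimes_R K_{n-1}$ is exact and $A$ is Gorenstein $n$-gr-flat.

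The main obstacle, as I see it, is the construction in the first two paragraphs rather than the homological bookkeeping: one has to recognise that the right thing to build is the upward part $0\to A\to F^0\to G^1\to\cdots$ of a complete resolution of $A$ --- extracted indirectly from a complete resolution of $B$ --- and that keeping the successive cokernels Gorenstein $n$-gr-flat rests on the snake-lemma identification $0\to C\to D\to B^1\to 0$ together with closure of the class under extensions (Proposition~\ref{prop1}(2)). Once $\mathbf{F}$ is in hand the $\mathrm{Tor}$-computations are routine, the only thing to keep in mind being that the two kinds of syzygies of $\mathbf{F}$ are controlled differently: the genuinely Gorenstein $n$-gr-flat ones coming from the coresolution via Lemma~\ref{lem1}(2), and the free-resolution syzygies of $A$ via the long exact sequence relating $A$, $B$ and $C$. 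The hypothesis that every $n$-presented module in $R$-gr of finite gr-flat dimension be $(n+1)$-presented enters exactly through the two appeals to Lemma~\ref{lem1}(2).
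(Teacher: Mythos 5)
Your proof is correct and follows essentially the same route as the paper's: the same pushout/snake-lemma step producing $0\to A\to F^{0}\to D\to 0$ with $D$ Gorenstein $n$-gr-flat as an extension coming from $0\to C\to D\to B^{1}\to 0$ (Proposition \ref{prop1}(2)), spliced with a gr-flat resolution of $A$ on the left and a coresolution of $D$ on the right, followed by the same $\mathrm{Tor}$-vanishing checks via Lemma \ref{lem1}(2). Your handling of the left-hand syzygies of $A$ through the long exact $\mathrm{Tor}$-sequence of $0\to A\to B\to C\to 0$ is in fact more explicit than the paper, which only gestures at the analogous step in the proof of Theorem \ref{thm1}.
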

\begin{proof}
 If $B$ is a  Gorenstein  $n$-gr-flat module in gr-$R$, then by Definition \ref{2.2} and Remark \ref{21}, there is an exact sequence $0 \rightarrow B\rightarrow F^0\rightarrow L\rightarrow 0$ in gr-$R$, where $F^0$ is $n$-gr-flat and $L$ is Gorenstein $n$-gr-flat. We have the following pushout diagram with exact rows:
$$\xymatrix{
 & 0\ar[d] &0\ar[d] &0\ar[d] & \\
0 \ar[r] &A \ar[r]\ar@{=}[d]& B\ar[r] \ar[d] &C\ar[r] \ar[d] & 0 \\
0 \ar[r]& A \ar[r] & F^{0}\ar[r] \ar[d] &D\ar[r] \ar[d] & 0 \\
 & &L\ar@{=}[r]\ar[d] & L \ar[d]& &\\
 & &  0 &  0 & & \\
}$$
By Proposition \ref{prop1}(2), $D$ is Gorenstein $n$-gr-flat, and so we have the following commutative diagram in gr-$R$:
\begin{displaymath}\xymatrix{
   \cdots\ar[r] & P_{1} \ar[rr]\ar[dr]&& P_{0} \ar[rr]\ar[dr]&& F^{0} \ar[rr]\ar[rd]&& D^{0}\ar[rr]\ar[rd]&& D^{1}\ar[r]&\cdots,\\
     & &  L_{0}\ar[ur] \ar[rd] && A\ar[ur]\ar[rd] & &D \ar[ur] \ar[rd] & &L^{1} \ar[ur] \ar[rd]\\
      & 0\ar[ur] && 0 \ar[ur]        && 0\ar[ur]          && 0\ar[ur]   &&  0}
\end{displaymath}
where $D^i$ and $F^{0}$ are $n$-gr-flat modules, $P_{i}$ is gr-flat, $A={\rm Ker}(F^{0}\rightarrow D)$, $D={\rm Ker}(D^{0}\rightarrow D^{1})$, $L_{i}={\rm Ker}(P_{i}\rightarrow P_{i-1})$ and $L^{i}={\rm Ker}(D^{i}\rightarrow D^{i+1})$.
By Remark \ref{21}, $P_{i}$  and $L^{i}$ are Gorenstein $n$-gr-flat and hence by Lemma \ref{lem1}(2),  ${\rm Tor}_{t}^{R}(L^{i}, K_{n-1})={\rm Tor}_{t}^{R}(D, K_{n-1})=0$ for any special gr-presented module $K_{n-1}$  in $R$-gr with gr-${\rm fd}_{R}(K_{n-1})<\infty$ and any $t\geq 0$.   So, similar to the proof  $(1)\Longrightarrow(2)$ of Theorem \ref{thm1},  it follows that  $-\otimes_{R}K_{n-1}$ on the above horizontal sequence in diagram is exact and so $A$ is Gorenstein $n$-gr-flat.
  \end{proof}
  
\begin{cor}\label{cor2}
Let every $n$-presented module in $R$-gr with gr-${\rm fd}_{R}(U)<\infty$ be $(n+1)$-presented. Then,  a module $M$ in gr-$R$ is  Gorenstein $n$-gr-flat if and only if every gr-pure submodule and any gr-pure epimorphic image of $M$ are Gorenstein $n$-gr-flat.
\end{cor}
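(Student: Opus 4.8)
The plan is to run the proof of Corollary~\ref{cor1} through the usual dictionary between the FP-gr-injective and the gr-flat sides: replace ${\rm HOM}_{R}(K_{n-1},-)$ by $-\otimes_{R}K_{n-1}$, ${\rm EXT}^{i}_{R}$ by ${\rm Tor}^{R}_{i}$, ``$n$-FP-gr-injective'' by ``$n$-gr-flat'', Proposition~\ref{prop1}(1) by Proposition~\ref{prop1}(2), and Lemma~\ref{lem1}(1) by Lemma~\ref{lem1}(2); note that the standing hypothesis of the corollary is exactly the hypothesis of Proposition~\ref{prop23} and of Lemma~\ref{lem1}(2). The one structural change is that, on the gr-flat side, the role played by Theorem~\ref{thm1} in the proof of Corollary~\ref{cor1} (closure of the relative Gorenstein class under the cokernel term $C$ of $0\to A\to B\to C\to 0$ when $A,B$ lie in it) has to be replaced by Proposition~\ref{prop23}, which instead recovers the kernel term $A$ when $B,C$ lie in it; so in the gr-flat case the gr-pure epimorphic image is handled first and the gr-pure submodule is deduced from it.

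For $(\Longleftarrow)$: $M$ is a gr-pure epimorphic image of itself, hence Gorenstein $n$-gr-flat; equivalently, for any gr-pure submodule $K\subseteq M$ the sequence $0\to K\to M\to M/K\to 0$ is gr-pure with $K$ and $M/K$ Gorenstein $n$-gr-flat by hypothesis, so Proposition~\ref{prop1}(2) gives $M\in\mathcal{G}_{gr-\mathcal{F}_{n}}$.

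For $(\Longrightarrow)$ it suffices to check that $\mathcal{G}_{gr-\mathcal{F}_{n}}$ is closed under gr-pure submodules and under gr-pure epimorphic images. So let $M$ be Gorenstein $n$-gr-flat and $0\to K\to M\to M/K\to 0$ a gr-pure exact sequence in gr-$R$. First I would show $M/K\in\mathcal{G}_{gr-\mathcal{F}_{n}}$: starting from a complete $n$-gr-flat complex $\mathbf{F}=\cdots\to F_{1}\to F_{0}\to F^{0}\to F^{1}\to\cdots$ for $M$ (Definition~\ref{2.2}), build a left $n$-gr-flat resolution of $M/K$ by applying the Horseshoe Lemma to $0\to\ker(F_{0}\to M)\to\widetilde K\to K\to 0$, where $\widetilde K$ is the preimage of $K$ in $F_{0}$ (so $0\to\widetilde K\to F_{0}\to M/K\to 0$ is exact), together with a gr-flat resolution of $K$; and build a right $n$-gr-flat coresolution of $M/K$ by pushing $M\hookrightarrow F^{0}$ out along $M\twoheadrightarrow M/K$, which yields $0\to M/K\to D\to M^{1}\to 0$ and $0\to K\to F^{0}\to D\to 0$ with $M^{1}=\ker(F^{1}\to F^{2})$ Gorenstein $n$-gr-flat, and then iterating. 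This produces a complex of $n$-gr-flat modules having $M/K$ as a cycle and the remaining cycles Gorenstein $n$-gr-flat; that $-\otimes_{R}K_{n-1}$ leaves it exact for every special gr-presented $K_{n-1}$ with gr-${\rm fd}_{R}(K_{n-1})<\infty$ should follow from Lemma~\ref{lem1}(2) applied to those cycles together with the long exact ${\rm Tor}$-sequences, exactly as in the diagram chase of $(1)\Rightarrow(2)$ in Theorem~\ref{thm1}. Once $M/K$ is known to be Gorenstein $n$-gr-flat, Proposition~\ref{prop23} applied to $0\to K\to M\to M/K\to 0$ gives that the gr-pure submodule $K$ is Gorenstein $n$-gr-flat as well; since every gr-pure epimorphic image of $M$ is of the form $M/K$ for some gr-pure submodule $K$, both classes lie in $\mathcal{G}_{gr-\mathcal{F}_{n}}$.

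The step I expect to be the main obstacle is the verification that $M/K\in\mathcal{G}_{gr-\mathcal{F}_{n}}$ (closure under gr-pure epimorphic images). Unlike in Corollary~\ref{cor1}, there is no cokernel-closure statement for $\mathcal{G}_{gr-\mathcal{F}_{n}}$ to lean on (no gr-flat analogue of Theorem~\ref{thm1} has been established), so the complete $n$-gr-flat complex for $M/K$ really has to be produced explicitly and then tested against $-\otimes_{R}K_{n-1}$. Here one must: (i) read ``gr-pure'' as ordinary categorical purity in the Grothendieck category gr-$R$ (as in \cite{MJP}), not as the ``special gr-pure'' notion of Definition~\ref{NG}, so that gr-pure monomorphisms are filtered colimits of split monomorphisms and interact well with the $n$-gr-flat resolutions and with $-\otimes_{R}K_{n-1}$, and so that the pushout $M/K\to D$ above stays gr-pure; and (ii) keep careful track, stage by stage, of which cycles entering the construction have finite gr-flat dimension, so that Lemma~\ref{lem1}(2) applies at each step. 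Granted this, the reduction to the gr-pure submodule via Proposition~\ref{prop23}, and the direction $(\Longleftarrow)$ via Proposition~\ref{prop1}(2), are purely formal.
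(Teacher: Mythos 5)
Your $(\Longleftarrow)$ direction and your overall architecture for $(\Longrightarrow)$ (treat the gr-pure quotient first, then recover the gr-pure submodule $K$ from $0\to K\to M\to M/K\to 0$ via Proposition~\ref{prop23}) coincide with the paper. The genuine gap is exactly the step you flag as ``the main obstacle'': you never actually prove that $M/K$ is Gorenstein $n$-gr-flat, and the construction you sketch does not obviously close. The cycles of the complex you build by the Horseshoe Lemma are extensions of syzygies of $K$ (in an arbitrary gr-flat resolution of $K$) by Gorenstein $n$-gr-flat modules, and the exactness of $-\otimes_R K_{n-1}$ on the total complex requires ${\rm Tor}_1^R(\mbox{cycle},K_{n-1})=0$ at every spot. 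Lemma~\ref{lem1}(2) does not supply this, because it applies only to modules already known to be Gorenstein $n$-gr-flat, and ${\rm Tor}_1^R(\Omega^iK,K_{n-1})\cong{\rm Tor}_{i+1}^R(K,K_{n-1})$ has no reason to vanish at this stage --- the Gorenstein $n$-gr-flatness of $K$ is precisely what the argument is ultimately trying to establish, so there is a real risk of circularity. Your parenthetical ``should follow \dots exactly as in the diagram chase of Theorem~\ref{thm1}'' is therefore not a proof.

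The paper avoids all of this with a short duality argument that makes the heavy construction unnecessary: since $0\to K\to M\to M/K\to 0$ is gr-pure and each special gr-presented $K_{n-1}$ is finitely presented, one gets ${\rm Tor}_1^R(M/K,K_{n-1})=0$; applying $(-)^*$ and \cite[Lemma 2.1]{Z.JJG} this gives ${\rm EXT}_R^1(K_{n-1},(M/K)^*)=0$, so $(M/K)^*$ is $n$-FP-gr-injective by \cite[Proposition 3.10]{NG}, hence $M/K$ is $n$-gr-flat by \cite[Proposition 3.8]{NG} --- that is, $M/K$ lands in the \emph{smaller} class $gr$-$\mathcal{F}_n\subseteq\mathcal{G}_{gr-\mathcal{F}_n}$ of Remark~\ref{21}(1), with no complete resolution ever being built. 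This mirrors what Corollary~\ref{cor1} does on the injective side (there the gr-pure submodule is shown to be honestly $n$-FP-gr-injective), a feature of that proof your ``dictionary'' overlooked. If you want to salvage your route, you should either import this purity-implies-$n$-gr-flatness observation for the quotient, or find an independent argument for the ${\rm Tor}$-vanishing of the cycles in your constructed complex; as written, the key implication is asserted rather than proved.
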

\begin{proof}
$(\Longrightarrow)$  Let $M$ be a  Gorenstein $n$-gr-flat module in gr-$R$ and $K$ a gr-pure submodule in $M$. Then, the exact sequence $0\rightarrow K\rightarrow M\rightarrow \frac{M}{K}\rightarrow 0$ is gr-pure.  So, if $K_{n-1}$ is special gr-presented module  in $R$-gr, then ${\Tor}_{1}^{R}(\frac{M}{K}, K_{n-1})=0$ and consequently  by
\cite[Lemma 2.1]{Z.JJG}, ${\Tor}_{1}^{R}(\frac{M}{K}, K_{n-1})^*\cong{\rm EXT}_{R}^{1}(K_{n-1},(\frac{M}{K})^*)=0$. Therefore, the exact sequence $0\rightarrow (\frac{M}{K})^*\rightarrow M^*\rightarrow K^*\rightarrow 0$ is special gr-pure in $R$-gr, and   using \cite[Proposition 3.10]{NG}, we deduce that  $(\frac{M}{K})^*$ is $n$-FP-gr-injective. By \cite[Proposition 3.8]{NG}, $\frac{M}{K}$ is $n$-gr-flat, and then  Proposition \ref{prop23}  shows that $K$ is Gorenstein $n$-gr-flat.

 $(\Longleftarrow )$   Let  $K$ be a gr-pure submodule in $M$. Then, the exact sequence $0\rightarrow K\rightarrow M\rightarrow \frac{M}{K}\rightarrow 0$ is gr-pure. So, it  follows, by Proposition \ref{prop1}(2), that $M$ is  Gorenstein $n$-gr-flat.
\end{proof}

Also, as for the classical injective (resp. flat) notion, the class $\mathcal{G}_{gr-\mathcal{FI}_n}$ in $R$-gr (resp. $\mathcal{G}_{gr-\mathcal{F}_n}$ in gr-$R$)  is closed under direct products (resp. direct sums).

\begin{prop}\label{prod}
Let $R$ be a graded ring. Then:
\begin{enumerate}
\item [\rm (1)]
 The class $\mathcal{G}_{gr-\mathcal{FI}_n}$ in $R$-gr
 is closed under direct products.
 \item [\rm (2)]
 The class $\mathcal{G}_{gr-\mathcal{F}_n}$ in gr-$R$
 is closed under direct sums.
 \end{enumerate}
\end{prop}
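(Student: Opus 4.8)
The plan is to build, from the exact sequences that define the given Gorenstein $n$-FP-gr-injective (resp. Gorenstein $n$-gr-flat) modules, a single exact sequence that witnesses the Gorenstein property of the product (resp. the direct sum); this is the graded transcription of the classical arguments showing that Gorenstein injective modules are closed under products and Gorenstein flat modules under sums.

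For (1), I would start with a family $\{M_\lambda\}_{\lambda\in\Lambda}$ in $\mathcal{G}_{gr-\mathcal{FI}_n}$ and fix, for each $\lambda$, an exact sequence $\mathbf{A}_\lambda=\cdots\to A_{1,\lambda}\to A_{0,\lambda}\to A^0_\lambda\to A^1_\lambda\to\cdots$ of $n$-FP-gr-injective modules with $M_\lambda=\ker(A^0_\lambda\to A^1_\lambda)$ such that ${\rm HOM}_R(K_{n-1},\mathbf{A}_\lambda)$ is exact for every special gr-presented $K_{n-1}$ with gr-${\rm pd}_R(K_{n-1})<\infty$. Set $\mathbf{A}=\prod_\lambda\mathbf{A}_\lambda$, the product taken in $R$-gr. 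Three checks then remain: (i) each term $\prod_\lambda A_{i,\lambda}$ and $\prod_\lambda A^i_\lambda$ is again $n$-FP-gr-injective — this holds because the graded functor ${\rm EXT}^n_R(N,-)$ commutes with products in its second argument for every finitely $n$-presented $N$ (one may reduce, via the special short exact sequence, to ${\rm EXT}^1_R(K_{n-1},-)$, which commutes with products since products in $R$-gr are computed degreewise and hence exact), so ${\rm EXT}^n_R(N,\prod_\lambda A_{i,\lambda})\cong\prod_\lambda{\rm EXT}^n_R(N,A_{i,\lambda})=0$; (ii) $\mathbf{A}$ is exact with $\prod_\lambda M_\lambda=\ker(\prod_\lambda A^0_\lambda\to\prod_\lambda A^1_\lambda)$, since products in $R$-gr are exact and commute with kernels; (iii) ${\rm HOM}_R(K_{n-1},\mathbf{A})$ is exact for every special gr-presented $K_{n-1}$ with gr-${\rm pd}_R(K_{n-1})<\infty$. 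For (iii) I would note that in each degree $\sigma$ the functor ${\rm HOM}_R(K_{n-1},-)$ is ${\rm Hom}_{R-{\rm gr}}(K_{n-1},-(\sigma))$, which commutes with the degreewise product, so ${\rm HOM}_R(K_{n-1},\mathbf{A})$ is degreewise a product of the complexes ${\rm HOM}_R(K_{n-1},\mathbf{A}_\lambda)$; each of these is exact by hypothesis and a product of exact complexes of abelian groups is exact. Hence $\prod_\lambda M_\lambda\in\mathcal{G}_{gr-\mathcal{FI}_n}$.

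For (2) the same strategy applies with all arrows and test functors dualized, and is actually easier because $-\otimes_R-$ and all ${\rm Tor}^R_i(-,-)$ commute with direct sums unconditionally and direct sums are exact. Given $\{N_\lambda\}\subseteq\mathcal{G}_{gr-\mathcal{F}_n}$ with defining sequences $\mathbf{F}_\lambda$ of $n$-gr-flat modules, I would take $\mathbf{F}=\bigoplus_\lambda\mathbf{F}_\lambda$: each $\bigoplus_\lambda F_{i,\lambda}$ and $\bigoplus_\lambda F^i_\lambda$ is $n$-gr-flat since ${\rm Tor}^R_n(\bigoplus_\lambda F_{i,\lambda},N)\cong\bigoplus_\lambda{\rm Tor}^R_n(F_{i,\lambda},N)=0$ for every finitely $n$-presented $N$; $\mathbf{F}$ is exact with $\bigoplus_\lambda N_\lambda=\ker(\bigoplus_\lambda F^0_\lambda\to\bigoplus_\lambda F^1_\lambda)$; and for every special gr-presented $K_{n-1}$ with gr-${\rm fd}_R(K_{n-1})<\infty$ one has $\mathbf{F}\otimes_R K_{n-1}\cong\bigoplus_\lambda(\mathbf{F}_\lambda\otimes_R K_{n-1})$, a direct sum of exact complexes, hence exact. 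Therefore $\bigoplus_\lambda N_\lambda\in\mathcal{G}_{gr-\mathcal{F}_n}$.

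The only genuine, and rather mild, obstacle is confined to part (1): one must be disciplined about which product is meant — namely the product inside $R$-gr, computed degreewise — and confirm that ${\rm HOM}_R(K_{n-1},-)$ and the graded ${\rm EXT}^n_R(N,-)$ interact with it compatibly with the degree decomposition. Once that bookkeeping is settled, everything reduces to the elementary facts that products of exact sequences of abelian groups are exact, together with the (routinely verified) closure of the classes $gr$-$\mathcal{FI}_n$ and $gr$-$\mathcal{F}_n$ under products and direct sums, respectively.
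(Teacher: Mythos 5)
Your proposal is correct. In fact the paper states Proposition \ref{prod} without any proof (it is only prefaced by the remark that the situation is ``as for the classical injective (resp.\ flat) notion''), so there is nothing to compare against; your argument is the standard verification one would expect. The three checks you isolate are exactly the right ones, and the key bookkeeping point --- that the product is the degreewise product in $R$-gr, that ${\rm HOM}_R(K_{n-1},-)$ and ${\rm EXT}^1_R(K_{n-1},-)$ commute with it because the special gr-presented module $K_{n-1}$ is finitely presented (it sits in $0\to K_n\to F_{n-1}\to K_{n-1}\to 0$ with $F_{n-1}$ finitely generated free and $K_n$ finitely generated), and that a product of exact complexes of abelian groups is exact --- is handled correctly.
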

Next definition contains some general remarks about resolving classes of graded modules which will be useful  in Sections 3 and 4.  We use $gr$-$\mathscr{I}(R)$ to denote  the class of finite injective graded left modules and the symbol $gr$-$\mathscr{F}(R)$ denotes the class of finite projective graded right modules (the graded version of \cite[1.1. Resolving classes]{HH}).
\begin{Def}\label{resol1}
Let $R$ be a graded ring and $\X$ a class of graded modules. Then:
\begin{enumerate}
\item [\rm (1)]
We call $\X$ gr-injectively resolving if $gr$-$\mathscr{I}(R)\subseteq \X$, and for every short exact sequence
$0\rightarrow A\rightarrow B\rightarrow C\rightarrow0$ with $A\in\X$ the conditions $B\in\X$ and $C\in\X$ are
equivalent.
\item [\rm (2)]
We call $\X$ gr-projectively resolving if $gr$-$\mathscr{F}(R)\subseteq \X$, and for every short exact sequence
$0\rightarrow A\rightarrow B\rightarrow C\rightarrow0$ with $C\in\X$ the conditions $A\in\X$ and $B\in\X$ are
equivalent.
\end{enumerate}
\end{Def}

By Definition \ref{resol1},  Propositions \ref{prop1}, \ref{prop23}, \ref{prod}, Theorem \ref{thm1} and the graded version of \cite[Proposition 1.4]{HH}, we have the following easy observations.

\begin{prop}\label{resol2}
Assume that every $n$-presented module in $R$-gr with gr-${{\fd}_{R}(U)<\infty}$ is $(n+1)$-presented. Then:
\begin{enumerate}
\item [\rm (1)]
The class  $\mathcal{G}_{gr-\mathcal{FI}_n}$ is gr-injectively resolving.
\item [\rm (2)]
The class $\mathcal{G}_{gr-\mathcal{FI}_n}$
 is closed under direct summands.
\item [\rm (3)]
 The class $\mathcal{G}_{gr-\mathcal{F}_n}$ is gr-projectively resolving.
\item [\rm (4)]
The class $\mathcal{G}_{gr-\mathcal{F}_n}$
 is closed under direct summands.
\end{enumerate}
\end{prop}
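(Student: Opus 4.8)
The plan is to invoke the general machinery of resolving classes (the graded analogue of \cite[Proposition 1.4]{HH}), which says: if a class $\X$ is closed under extensions, contains the relevant ``small'' objects, has the two-out-of-three property for short exact sequences in the appropriate direction, is closed under countable direct sums (resp. products) and under well-ordered direct limits (resp. inverse limits), then $\X$ is closed under direct summands and is injectively (resp. projectively) resolving. So the strategy is simply to check that $\mathcal{G}_{gr-\mathcal{FI}_n}$ and $\mathcal{G}_{gr-\mathcal{F}_n}$ satisfy the hypotheses of that result, citing the parts already established in the excerpt.

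First I would treat (1) and (2). For gr-injective resolvingness of $\mathcal{G}_{gr-\mathcal{FI}_n}$: the containment $gr$-$\mathscr{I}(R)\subseteq\mathcal{G}_{gr-\mathcal{FI}_n}$ is immediate from Remark \ref{21}(1) (finite gr-injectives are gr-injective, hence $n$-FP-gr-injective, hence Gorenstein $n$-FP-gr-injective). For a short exact sequence $0\to A\to B\to C\to 0$ with $A\in\mathcal{G}_{gr-\mathcal{FI}_n}$: if $C\in\mathcal{G}_{gr-\mathcal{FI}_n}$ then $B\in\mathcal{G}_{gr-\mathcal{FI}_n}$ by Proposition \ref{prop1}(1); conversely, if $B\in\mathcal{G}_{gr-\mathcal{FI}_n}$ then, since by hypothesis every $n$-presented module in $R$-gr with gr-${\rm fd}_R(U)<\infty$ is $(n+1)$-presented, a fortiori every $n$-presented module with gr-${\rm pd}_R(U)<\infty$ is $(n+1)$-presented (finite gr-projective dimension implies finite gr-flat dimension), so Theorem \ref{thm1} applies and gives $C\in\mathcal{G}_{gr-\mathcal{FI}_n}$. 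This establishes (1). Then (2) follows from the graded version of \cite[Proposition 1.4]{HH} once we note the class is closed under extensions (Proposition \ref{prop1}(1)) and under direct products (Proposition \ref{prod}(1)); the summand closure is one of the conclusions of that proposition.

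For (3) and (4) the argument is the dual. The containment $gr$-$\mathscr{F}(R)\subseteq\mathcal{G}_{gr-\mathcal{F}_n}$ holds because finite gr-projectives are gr-flat, hence $n$-gr-flat, hence Gorenstein $n$-gr-flat by Remark \ref{21}(1). For a short exact sequence $0\to A\to B\to C\to0$ in gr-$R$ with $C\in\mathcal{G}_{gr-\mathcal{F}_n}$: if $A\in\mathcal{G}_{gr-\mathcal{F}_n}$ then $B\in\mathcal{G}_{gr-\mathcal{F}_n}$ by Proposition \ref{prop1}(2); conversely if $B\in\mathcal{G}_{gr-\mathcal{F}_n}$ then $A\in\mathcal{G}_{gr-\mathcal{F}_n}$ by Proposition \ref{prop23} (whose hypothesis is exactly the one we are assuming). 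Hence $\mathcal{G}_{gr-\mathcal{F}_n}$ is gr-projectively resolving, proving (3). Combining extension-closedness (Proposition \ref{prop1}(2)) with closure under direct sums (Proposition \ref{prod}(2)) and the (dual) graded version of \cite[Proposition 1.4]{HH} yields closure under direct summands, which is (4).

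I expect the only genuine subtlety to be bookkeeping around the hypothesis: Theorem \ref{thm1} is phrased in terms of gr-projective dimension while the standing assumption here is about gr-flat dimension, so one must spell out the implication ``gr-${\rm pd}_R(U)<\infty\Rightarrow$ gr-${\rm fd}_R(U)<\infty$'' to deduce that the hypothesis of Theorem \ref{thm1} is met. Everything else is a direct citation of Propositions \ref{prop1}, \ref{prop23}, \ref{prod} and Theorem \ref{thm1} together with the resolving-class formalism of Definition \ref{resol1}; no new computation is needed, which is why the statement is flagged as ``easy observations.''
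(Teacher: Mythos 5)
Your proposal is correct and follows exactly the route the paper intends: the paper gives no written proof, merely citing Definition \ref{resol1}, Propositions \ref{prop1}, \ref{prop23}, \ref{prod}, Theorem \ref{thm1} and the graded version of \cite[Proposition 1.4]{HH} as "easy observations," and your write-up assembles precisely those ingredients (including the worthwhile remark that the gr-flat-dimension hypothesis implies the gr-projective-dimension hypothesis needed for Theorem \ref{thm1}). The only slight imprecision is the mention of well-ordered (co)limits in your opening plan, which \cite[Proposition 1.4]{HH} does not require — only resolvingness plus closure under countable direct sums (resp. products) via the Eilenberg swindle — and your actual argument correctly uses only the latter.
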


We know that, if $R$ is a left $n$-gr-coherent ring, then every $n$-presented module in $R$-gr with gr-${{\fd}_{R}(U)<\infty}$ is $(n+1)$-presented. So in the following theorem according to previous results,  we investigate the relationships between  Gorenstein $n$-FP-gr-injective and Gorenstein $n$-gr-flat modules on $n$-gr-coherent rings.
\begin{thm}\label{thm2}
Let  $R$ be a left  $n$-gr-coherent ring. Then,
\begin{enumerate}
\item [\rm (1)]
Module $M$ in $R$-gr is  Gorenstein $n$-FP-gr-injective if and only if $M^{*}$ is Gorenstein $n$-gr-flat in gr-$R$.
\item [\rm (2)]
Module $M$ in gr-$R$ is Gorenstein $n$-gr-flat if and only if $M^{*}$ is Gorenstein $n$-FP-gr-injective in $R$-gr.
\end{enumerate}
\end{thm}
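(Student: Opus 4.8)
The strategy is to transfer the defining data of Gorenstein $n$-FP-gr-injectivity (resp. Gorenstein $n$-gr-flatness) through the graded character functor $(-)^{*} = {\rm HOM}_{\mathbb{Z}}(-,\mathbb{Q}/\mathbb{Z})$, exploiting the exactness of $(-)^{*}$ together with the two adjunction-style isomorphisms already recorded in the preliminaries: for a left $n$-gr-coherent ring, $n$-FP-gr-injectivity and $n$-gr-flatness are interchanged under $(-)^{*}$ (this is \cite[Propositions 3.8, 3.10]{NG}, already invoked in Corollary \ref{cor2}), and ${\rm Tor}_{1}^{R}(M,K_{n-1})^{*}\cong{\rm EXT}_{R}^{1}(K_{n-1},M^{*})$ (this is \cite[Lemma 2.1]{Z.JJG}, again used in Corollary \ref{cor2}). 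I will prove the ``only if'' direction of (1) in detail; the ``if'' direction of (1) and both directions of (2) are entirely analogous, with the additional observation that for (2)'s converse one uses that a module $N$ is Gorenstein $n$-gr-flat whenever $N^{**}$ is, because $N$ is a gr-pure (indeed a direct) summand-type submodule of $N^{**}$ and the class $\mathcal{G}_{gr-\mathcal{F}_n}$ is closed under gr-pure submodules by Corollary \ref{cor2} (applicable since $R$ is left $n$-gr-coherent).

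\textbf{The ``only if'' direction of (1).} Suppose $M\in R$-gr is Gorenstein $n$-FP-gr-injective, witnessed by an exact complex
$${\mathbf{A}} = \cdots\longrightarrow A_1\longrightarrow A_0\longrightarrow A^0\longrightarrow A^1\longrightarrow\cdots$$
of $n$-FP-gr-injective modules with $M = \ker(A^0\rightarrow A^1)$ and ${\rm HOM}_{R}(K_{n-1},{\mathbf{A}})$ exact for every special gr-presented $K_{n-1}$ with gr-${\rm pd}_{R}(K_{n-1})<\infty$. Applying $(-)^{*}$, which is exact, yields an exact complex
$${\mathbf{A}}^{*} = \cdots\longrightarrow (A^1)^{*}\longrightarrow (A^0)^{*}\longrightarrow (A_0)^{*}\longrightarrow (A_1)^{*}\longrightarrow\cdots$$
in gr-$R$ with $M^{*} = \mathrm{coker}((A^1)^{*}\rightarrow (A^0)^{*}) = \ker((A_0)^{*}\rightarrow (A_1)^{*})$ (relabel indices so this is of the required two-sided form). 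Since $R$ is left $n$-gr-coherent, \cite[Proposition 3.8]{NG} gives that each $(A_i)^{*}$ and $(A^i)^{*}$ is $n$-gr-flat. It remains to check that ${\mathbf{A}}^{*}\otimes_{R}K_{n-1}$ is exact whenever $K_{n-1}$ is special gr-presented with gr-${\rm fd}_{R}(K_{n-1})<\infty$. Fix such a $K_{n-1}$; note gr-${\rm fd}_{R}(K_{n-1})<\infty$ implies gr-${\rm pd}_{R}(K_{n-1})<\infty$ over the $n$-gr-coherent ring $R$ (a finitely presented module of finite flat dimension has finite projective dimension), so ${\rm HOM}_{R}(K_{n-1},{\mathbf{A}})$ is exact by hypothesis. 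Break ${\mathbf{A}}$ into short exact sequences $0\rightarrow Z\rightarrow X\rightarrow Z'\rightarrow 0$ at each spot; by Lemma \ref{lem1}(1) all the relevant ${\rm EXT}^{1}_{R}(K_{n-1},-)$ of the cycle modules vanish, so applying ${\rm HOM}_{R}(K_{n-1},-)$ keeps each such short exact sequence exact, and the standard tensor–Hom duality for graded modules, ${\rm HOM}_{R}(K_{n-1},X)^{\,\text{paired with}\,}(X^{*}\otimes_{R}K_{n-1})$, together with the isomorphism $(X^{*}\otimes_{R}K_{n-1})\cong {\rm HOM}_{R}(K_{n-1},X)^{*}$ valid for finitely presented $K_{n-1}$, shows that ${\mathbf{A}}^{*}\otimes_{R}K_{n-1}\cong ({\rm HOM}_{R}(K_{n-1},{\mathbf{A}}))^{*}$ as complexes. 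Since $(-)^{*}$ is exact and ${\rm HOM}_{R}(K_{n-1},{\mathbf{A}})$ is exact, ${\mathbf{A}}^{*}\otimes_{R}K_{n-1}$ is exact. Hence $M^{*}$ is Gorenstein $n$-gr-flat.

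\textbf{Where the work concentrates.} The two genuine points requiring care are (i) verifying the natural isomorphism of complexes $X^{*}\otimes_{R}K_{n-1}\cong {\rm HOM}_{R}(K_{n-1},X)^{*}$ in the graded setting and checking it is compatible with the differentials — this is the graded analogue of the classical fact that for a finitely presented module $P$ one has $P\otimes M^{*}\cong {\rm Hom}(P,M)^{*}$, and finite gr-presentation of $K_{n-1}$ is exactly what makes it go through; and (ii) the reduction ``gr-${\rm fd}<\infty \Rightarrow$ gr-${\rm pd}<\infty$'' for finitely presented graded modules over a left $n$-gr-coherent ring, needed to connect the two differently-indexed classes of test modules in Definition \ref{2.2}. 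The main obstacle is bookkeeping the short-exact-sequence dévissage together with Lemma \ref{lem1} so that exactness of ${\rm HOM}_{R}(K_{n-1},{\mathbf{A}})$ is genuinely preserved under $(-)^{*}$ and matches ${\mathbf{A}}^{*}\otimes_{R}K_{n-1}$; everything else is formal manipulation with the exact functor $(-)^{*}$ and the cited results \cite[Propositions 3.8, 3.10]{NG}, \cite[Lemma 2.1]{Z.JJG}. For the converses I will use the natural embedding $N\hookrightarrow N^{**}$, which is gr-pure, and invoke Corollaries \ref{cor1} and \ref{cor2} (whose hypotheses hold since a left $n$-gr-coherent ring satisfies both ``every $n$-presented module in $R$-gr with gr-${\rm pd}_{R}(U)<\infty$ is $(n+1)$-presented'' and the gr-${\rm fd}$ version) to descend Gorenstein $n$-gr-flatness (resp. Gorenstein $n$-FP-gr-injectivity) from the double dual back to $N$ (resp. $M$).
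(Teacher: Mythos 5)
Your converse directions coincide with the paper's (gr-purity of $M\hookrightarrow M^{**}$ via \cite[Proposition 2.3.5]{JX} plus Corollaries \ref{cor1} and \ref{cor2}), but your forward directions take a genuinely different route. The paper never dualizes the whole totally acyclic complex $\mathbf{A}$: for (1)($\Rightarrow$) it keeps only the left half $\cdots\to A_1\to A_0\to M\to 0$, dualizes it into a right $n$-gr-flat resolution $0\to M^*\to (A_0)^*\to (A_1)^*\to\cdots$ (via \cite[Theorem 3.17]{NG} and \cite[Lemma 3.53]{Rot2}), splices a graded projective resolution on the left, and then verifies exactness of $\X\otimes_R K_{n-1}$ by induction on gr-${\rm fd}_R(K_{n-1})$, using $n$-gr-coherence to see that the syzygy $K_n$ is again special gr-presented of smaller flat dimension; the ${\rm HOM}$-exactness hypothesis on $\mathbf{A}$ is never used in that direction (and symmetrically for (2)($\Rightarrow$), with a gr-injective coresolution spliced on the right and induction on gr-${\rm pd}$). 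You instead dualize all of $\mathbf{A}$ and transport the orthogonality condition through the natural isomorphism $X^*\otimes_R K_{n-1}\cong {\rm HOM}_R(K_{n-1},X)^*$ for finitely presented $K_{n-1}$. That is a clean and legitimate alternative (and, once you have the complex-level isomorphism, the dévissage through Lemma \ref{lem1} is redundant), but it forces you to confront the mismatch of test classes: you must upgrade gr-${\rm fd}_R(K_{n-1})<\infty$ to gr-${\rm pd}_R(K_{n-1})<\infty$. Your parenthetical justification ("a finitely presented module of finite flat dimension has finite projective dimension") is false over an arbitrary ring; here it works only because $n$-gr-coherence makes the $n$-presented module $U$, hence $K_{n-1}$, $\infty$-presented, so every syzygy of $K_{n-1}$ stays finitely presented and the last one is finitely presented gr-flat, hence gr-projective. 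Spell that out, and also drop the aside that $N$ is a direct summand of $N^{**}$ (it is only gr-pure in it, which suffices). The paper's splicing argument buys you freedom from this reduction; your argument buys a more symmetric, functorial proof that uses the defining acyclicity of $\mathbf{A}$ directly.
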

\begin{proof}
(1) $(\Longrightarrow )$ By Definition \ref{2.2}, there is an exact sequence
$\cdots\rightarrow A_1 \rightarrow A_{0} \rightarrow M \rightarrow 0 $ in $R$-gr, where every $A_i$ is $n$-FP-gr-injective, and by \cite[Theorem 3.17]{NG}, every $(A_i)^{*}$ is $n$-gr-flat in gr-$R$. So by \cite[Lemma 3.53]{Rot2}, there is an exact sequence $0\rightarrow M^*\rightarrow (A_{0})^{*}\rightarrow (A_{1})^{*}\rightarrow \cdots$  in gr-$R$. Hence, we have:
$$\X: \cdots\longrightarrow P_{1}\longrightarrow P_{0}\longrightarrow (A_{0})^{*}\longrightarrow (A_{1})^{*}\longrightarrow \cdots,$$
 where $P_i$ is gr-projective and $n$-gr-flat in gr-$R$ by Remark \ref{21} and also $M^{*}={\rm ker}((A_{0})^{*}\rightarrow (A_{1})^{*})$. Let $0\rightarrow K_{n}\rightarrow F_{n-1}\rightarrow K_{n-1}\rightarrow 0$ be a special short exact sequence in $R$-gr with gr-${\fd}_{R}(K_{n-1})<\infty$. Then,  $K_{n}$ is a gr-presented module with gr-${\fd}_{R}(K_{n})<\infty$, since $R$ is  $n$-gr-coherent. By \cite[Theorem 6.10]{Rot2} and by using the inductive presumption on gr-${\fd}_{R}(K_{n-1})$, we deduce that $\X\otimes_{R}K_{n-1}$ is exact and then $M^*$ is Gorenstein $n$-gr-flat.

$(\Longleftarrow )$ Let $M^*$ be a Gorenstein $n$-gr-flat module in gr-$R$. Then, by (2)($\Longrightarrow$),  $M^{**}$ is Gorenstein $n$-FP-gr-injective in $R$-gr. By \cite[Proposition 2.3.5]{JX}, $M$ is gr-pure in $M^{**}$, and so by Corollary \ref{cor1}, $M$ is  Gorenstein $n$-FP-gr-injective.\\
(2) ($\Longrightarrow$) By Definition \ref{2.2}, there is an exact sequence $0\rightarrow M\rightarrow F^{0}\rightarrow F^1\rightarrow\cdots$ of $n$-gr-flat modules in gr-$R$. By \cite[Proposition 3.8]{NG}, $(F^i)^{*}$ is $n$-FP-gr-injective for any $i\geq 0$. So by \cite[Lemma 3.53]{Rot2}, there is an exact sequence $\cdots\rightarrow (F^{1})^{*}\rightarrow (F^{0})^{*}\rightarrow M^* $  in $R$-gr. For a module $M^*$, there is an exact sequence $0\rightarrow M^{*}\rightarrow E_{0}\rightarrow E_{1} \rightarrow\cdots 0$ in $R$-gr, where $E_i$ is gr-injective. Consider the following exact sequence:
$$\cdots\longrightarrow (F^{1})^{*}\longrightarrow (F^{0})^{*}\longrightarrow E_{0}\longrightarrow E_{1} \longrightarrow\cdots$$
with $M^{*}={\rm ker}(E_{0}\rightarrow E_{1})$. Hence, by analogy with the proof  $(2)\Longrightarrow(1)$ of Corllary \ref{2.vc}, we obtain that  $M^*$ is Gorenstein $n$-FP-gr-injective.

($\Longleftarrow$) Let $M^*$ be a Gorenstein $n$-FP-gr-injective module in $R$-gr. Then, by (1)($\Longrightarrow$),  $M^{**}$ is Gorenstein $n$-gr-flat in gr-$R$. By \cite[Proposition 2.3.5]{JX}, $M$ is gr-pure in $M^{**}$, and so by Corollary \ref{cor2},  $M$ is Gorenstein $n$-gr-flat.
\end{proof}

Next, we are given other results of Gorenstein $n$-FP-gr-injective and $n$-gr-flat modules on $n$-gr-coherent rings.

\begin{prop}\label{limit}
Let $R$ be a left $n$-gr-coherent ring. Then,
\begin{enumerate}
\item [\rm (1)]
 the class $\mathcal{G}_{gr-\mathcal{FI}_n}$ in $R$-gr
 is closed under direct limits.
 \item [\rm (2)]
  the class $\mathcal{G}_{gr-\mathcal{F}_n}$ in gr-$R$
 is closed under direct products.
 \end{enumerate}
\end{prop}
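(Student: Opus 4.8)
The plan is to leverage the duality established in Theorem \ref{thm2}, which turns questions about direct limits in $R$-gr into questions about direct products in gr-$R$ and vice versa, together with the closure properties already recorded in Proposition \ref{prod} and Proposition \ref{resol2}. Since $R$ is left $n$-gr-coherent, every $n$-presented module in $R$-gr with gr-${\rm fd}_R(U)<\infty$ is automatically $(n+1)$-presented, so all hypotheses of Lemma \ref{lem1}, Theorem \ref{thm1}, Proposition \ref{prop23} and Proposition \ref{resol2} are in force; I will use these freely.

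For part (2), let $\{M_i\}_{i\in I}$ be a family of Gorenstein $n$-gr-flat modules in gr-$R$ and set $M=\prod_{i\in I}M_i$. First I would note that, over a left $n$-gr-coherent ring, the classes $gr$-$\mathcal{F}_n$ of $n$-gr-flat modules is closed under direct products: this follows because ${\rm Tor}_R^n(-,N)$ commutes with direct products when $N$ is $n$-presented (here one uses that $N$ admits a resolution by finitely generated free modules through degree $n$, so the relevant Tor is computed by a complex of modules each of which is finitely presented, and direct products are exact in gr-$R$). Then, writing for each $i$ a complete $n$-gr-flat resolution $\mathbf{F}^{(i)}$ of $M_i$ as in Definition \ref{2.2}, I would form the degreewise product complex $\prod_i \mathbf{F}^{(i)}$; its terms are $n$-gr-flat by the previous remark, it is exact since products are exact in gr-$R$, and $M=\ker(\prod_i(F^0)^{(i)}\to\prod_i(F^1)^{(i)})$. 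It remains to check that $(\prod_i\mathbf{F}^{(i)})\otimes_R K_{n-1}$ stays exact for every special gr-presented $K_{n-1}$ with gr-${\rm fd}_R(K_{n-1})<\infty$. Because $K_{n-1}$ is special gr-presented and $R$ is $n$-gr-coherent, $K_{n-1}$ is finitely gr-presented, so $-\otimes_R K_{n-1}$ commutes with the direct product $\prod_i$, reducing exactness of $(\prod_i\mathbf{F}^{(i)})\otimes_R K_{n-1}$ to exactness of each $\mathbf{F}^{(i)}\otimes_R K_{n-1}$, which holds by hypothesis. Hence $M$ is Gorenstein $n$-gr-flat.

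For part (1), let $\{M_i\}_{i\in I}$ be a direct system of Gorenstein $n$-FP-gr-injective modules in $R$-gr with colimit $M=\varinjlim M_i$. I would deduce this from part (2) via the character-module duality: by Theorem \ref{thm2}(1), each $M_i^\ast$ is Gorenstein $n$-gr-flat, and one has a natural isomorphism $(\varinjlim M_i)^\ast\cong\varprojlim M_i^\ast$; however $\varprojlim$ is not exact, so instead I would use the cleaner route of embedding $M^\ast$ as a direct summand of $\prod_i M_i^\ast$ — more precisely, the canonical map $M^\ast\to\prod_i M_i^\ast$ dual to the structure maps is a split monomorphism of graded right modules (this is the standard fact that $\varinjlim$ dualizes to a pure, hence split, situation after taking character modules, cf. the use of \cite[Proposition 2.3.5]{JX}). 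By part (2), $\prod_i M_i^\ast$ is Gorenstein $n$-gr-flat, and by Proposition \ref{resol2}(4) the class $\mathcal{G}_{gr-\mathcal{F}_n}$ is closed under direct summands, so $M^\ast$ is Gorenstein $n$-gr-flat. Applying Theorem \ref{thm2}(2), $M^{\ast\ast}$ is Gorenstein $n$-FP-gr-injective, and since $M$ is gr-pure in $M^{\ast\ast}$ by \cite[Proposition 2.3.5]{JX}, Corollary \ref{cor1} gives that $M$ is Gorenstein $n$-FP-gr-injective.

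The main obstacle I anticipate is the commutation of $-\otimes_R K_{n-1}$ (resp. ${\rm EXT}_R^n(N,-)$) with the infinite direct product (resp. the colimit), i.e.\ making precise why $n$-gr-coherence forces $K_{n-1}$ to be finitely gr-presented so that tensoring with it commutes with products; this is exactly where the coherence hypothesis is used and where one must be careful to invoke \cite[Proposition 3.8]{NG} and \cite[Proposition 3.10]{NG} correctly. The splitting argument in part (1) is the other delicate point, but it is entirely parallel to the duality arguments already carried out in the proof of Theorem \ref{thm2}.
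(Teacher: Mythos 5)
Your proposal is correct, and it is worth separating the two halves. For part (2) you argue exactly as the paper does: form the degreewise product of complete $n$-gr-flat resolutions, observe that its terms stay $n$-gr-flat (the paper quotes \cite[Theorem 3.17]{NG} for this, which is the closure-under-products statement you sketch via ${\rm Tor}$ commuting with products against an $\infty$-presented module), and use that $-\otimes_R K_{n-1}$ commutes with graded products because a special gr-presented $K_{n-1}$ is finitely presented. For part (1), however, you take a genuinely different route. The paper argues directly: it passes to $\varinjlim \mathbf{\Y}_i$ of the complete $n$-FP-gr-injective resolutions and uses \cite[Theorem 3.17]{NG} (direct limits of $n$-FP-gr-injectives are $n$-FP-gr-injective) together with \cite[Proposition 3.13]{NG} (${\rm HOM}_R(K_{n-1},-)$ commutes with direct limits for $K_{n-1}$ finitely presented). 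Your argument instead dualizes: the gr-pure epimorphism $\bigoplus_i M_i\to\varinjlim M_i$ has split dual, so $(\varinjlim M_i)^*$ is a direct summand of $\prod_i M_i^*$, which is Gorenstein $n$-gr-flat by part (2); closure under summands (Proposition \ref{resol2}(4)) and Theorem \ref{thm2}(1) then finish. Your route costs the extra input of Theorem \ref{thm2} and the split-dual characterization of gr-purity, but it buys something real: the paper's $\varinjlim\mathbf{\Y}_i$ argument tacitly assumes the complete resolutions $\mathbf{\Y}_i$ can be chosen functorially in $i$ (i.e.\ assembled into a direct system of complexes lifting the maps $M_i\to M_j$), a compatibility the paper never verifies; your duality argument sidesteps this entirely. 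One small simplification: once you know $M^*$ is Gorenstein $n$-gr-flat you may invoke the "if and only if" of Theorem \ref{thm2}(1) directly rather than routing through $M^{**}$ and Corollary \ref{cor1} — that detour merely re-runs the proof of the reverse implication of Theorem \ref{thm2}(1).
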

\begin{proof}
(1) Let $U\in$ $R$-gr be an $n$-presented module and let  $\{A_i\}_{i\in I}$ be a family of  $n$-FP-gr-injective modules in $R$-gr. Then by \cite[Theorem 3.17]{NG}, $\lim_{_{_{\hspace{-.4cm}{\longrightarrow}}}}A_i$ is $n$-FP-gr-injective. So, if   $\{M_i\}_{i\in I}$ is a family of Gorenstein $n$-FP-gr-injective modules in $R$-gr, then
 the following $n$-FP-gr-injective compelex
 $${\mathbf{\Y_i}}= \cdots\longrightarrow(A_{i})_{1}\longrightarrow(A_{i})_{0}\longrightarrow (A_{i})^{0}\longrightarrow (A_{i})^{1}\longrightarrow \cdots,$$

  where $M_i={\rm ker}( (A_{i})^{0}\rightarrow  (A_{i})^{1})$, induces the following exact sequence of $n$-FP-gr-injective modules in $R$-gr:
  $$\xymatrix{\lim_{_{_{\hspace{-.4cm}{\longrightarrow}}}}{\mathbf{\Y_i}}=\cdots \longrightarrow \lim_{_{_{\hspace{-.4cm}{\longrightarrow}}}}(A_{i})_{1} \longrightarrow  \lim_{_{_{\hspace{-.4cm}{\longrightarrow}}}}(A_{i})_{0}\longrightarrow \lim_{_{_{\hspace{-.4cm}{\longrightarrow}}}}(A_{i})^{0}\longrightarrow \lim_{_{_{\hspace{-.4cm}{\longrightarrow}}}}(A_{i})^{1}\longrightarrow \cdots},$$
  where ${\lim_{_{_{\hspace{-.4cm}{\longrightarrow}}}} M_i={\rm ker}( \lim_{_{_{\hspace{-.4cm}{\longrightarrow}}}} (A_{i})^{0}\rightarrow \lim_{_{_{\hspace{-.4cm}{\longrightarrow}}}} (A_{i})^{1})}$. Assume that $K_{n-1}$ is special gr-presented module in $R$-gr with gr-${\pd}_{R}(K_{n-1})<\infty$,  then by \cite[Proposition 3.13]{NG}, $${\rm HOM}_{R}(K_{n-1},\xymatrix{\lim_{_{_{\hspace{-.4cm}{\longrightarrow}}}}{\mathbf{\Y_i}})\cong\lim_{_{_{\hspace{-.4cm}{\longrightarrow}}}}} {\rm HOM}_{R}(K_{n-1},{\mathbf{\Y_i}}).$$
  By hypothesis, ${\rm HOM}_{R}(K_{n-1},{\mathbf{\Y_i}})$ is exact, and consequently $\lim_{_{_{\hspace{-.4cm}{\longrightarrow}}}} M_i$ is Gorenstein $n$-FP-gr-injective.

  (2)
Let $U\in$ $R$-gr be an $n$-presented and  let $\{F_i\}_{i\in I}$ be a family of  $n$-gr-flat modules in gr-$R$. Then by \cite[Theorem 3.17]{NG}, $\prod_{i\in I} F_i$ is $n$-gr-flat. So, if   $\{M_i\}$ is a family of Gorenstein $n$-gr-flat modules in gr-$R$, then
 the following $n$-gr-flat compelex
 $${\mathbf{\X{_i}}}=\cdots \longrightarrow  (F_{i})_{1}\longrightarrow (F_{i})_{0}\longrightarrow (F_{i})^{0}\longrightarrow (F_{i})^{1}\longrightarrow\cdots,$$
  where $M_i={\rm ker}( (F_{i})^{0}\rightarrow  (F_{i})^{1})$, induces the following exact sequence of $n$-gr-flat modules in gr-$R$:
 $$\prod_{i\in I}{\mathbf{\X{_i}}}=\cdots \longrightarrow  \prod_{i\in I}(F_{i})_{1}\longrightarrow \prod_{i\in I}(F_{i})_{0}\longrightarrow \prod_{i\in I}(F_{i})^{0}\longrightarrow \prod_{i\in I}(F_{i})^{1}\longrightarrow\cdots,$$
  where $\prod_{i\in I} M_i={\rm ker}( \prod_{i\in I} (F_{i})^{0}\rightarrow\prod_{i\in I} (F_{i})^{1})$. If $K_{n-1}$ is special gr-presented, then $$(\prod_{i\in I}{\mathbf{\X_{i}}}\bigotimes_{R}K_{n-1})\cong\prod_{i\in I}({\mathbf{\X_{i}}}\bigotimes_{R}K_{n-1}).$$
  By hypothesis, ${\mathbf{\X{_i}}}\bigotimes_{R}K_{n-1}$ is exact, and consequently $\prod_{i\in I} M_i$ is Gorenstein $n$-gr-flat.
\end{proof}

In the following proposition, we show that if $R$ is $n$-gr-coherent, then every Gorenstein $n$-FP-gr-injective module in $R$-gr is  $n$-FP-gr-injective if $l.n$-${\rm FP}$-{\rm gr}-${\rm dim}(R)<\infty$, and every Gorenstein $n$-gr-flat module in gr-$R$ is  $n$-gr-flat if $r.n$-{\rm gr}-${\rm dim}(R)<\infty$.
\begin{prop}\label{2.2a}
Let $R$ be a left $n$-gr-coherent ring. 
\begin{enumerate}
\item [\rm (1)]
If $l.n$-${\rm FP}$-{\rm gr}-${\rm dim}(R)<\infty$, then every Gorenstein $n$-FP-gr-injective module  in $R$-gr is  $n$-FP-gr-injective.
\item [\rm (2)]
If $r.n$-{\rm gr}-${\rm dim}(R)<\infty$, then every Gorenstein $n$-gr-flat module in gr-$R$ is  $n$-gr-flat.
\end{enumerate}
\end{prop}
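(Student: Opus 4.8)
The plan is, for a Gorenstein $n$-FP-gr-injective (resp.\ Gorenstein $n$-gr-flat) module $M$, to bound ${\rm EXT}_R^1(K_{n-1},M)$ (resp.\ ${\rm Tor}_1^R(M,K_{n-1})$) against every special gr-presented $K_{n-1}$ by a dimension shift along the defining complex of $M$, truncated after $d=l.n$-$\mathrm{FP}$-$\mathrm{gr}$-$\dim(R)$ (resp.\ $d=r.n$-$\mathrm{gr}$-$\dim(R)$) steps. The preparatory step I would isolate first is that over a left $n$-gr-coherent ring $n$-presentedness is stable under syzygies: a standard induction shows every $n$-presented graded module is $m$-presented for all $m\ge n$, so the first syzygy $K_n$ of a special gr-presented $K_{n-1}$ is again special gr-presented. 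This yields two things. First, if $A$ is $n$-FP-gr-injective then $A\in gr$-$\mathcal{FI}_m$ for all $m\ge n$ by Remark \ref{21}, so together with the dimension-shift isomorphism ${\rm EXT}_R^{t}(K_{n-1},A)\cong{\rm EXT}_R^{t+n-1}(U,A)$ (the $t=1$ case is the one recalled in Section 2) one gets ${\rm EXT}_R^{t}(K_{n-1},A)=0$ for \emph{all} $t\ge1$ and \emph{all} special gr-presented $K_{n-1}$; this is the strengthening of Lemma \ref{lem1} that drops the finite-gr-projective-dimension restriction, and dually an $n$-gr-flat $F$ has ${\rm Tor}_t^R(F,K_{n-1})=0$ for all $t\ge1$ and all special gr-presented $K_{n-1}$. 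Second, $l.n$-$\mathrm{FP}$-$\mathrm{gr}$-$\dim(R)=d$ (resp.\ $r.n$-$\mathrm{gr}$-$\dim(R)=d$) means ${\rm EXT}_R^{j}(K_{n-1},N)=0$ (resp.\ ${\rm Tor}_j^R(N,K_{n-1})=0$) for \emph{all} $j\ge d+1$, every graded $N$ and every special gr-presented $K_{n-1}$.

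For part (1), let $M$ be Gorenstein $n$-FP-gr-injective with defining exact sequence ${\mathbf A}=\cdots\to A_1\to A_0\to A^0\to A^1\to\cdots$ of $n$-FP-gr-injective modules, $M=\ker(A^0\to A^1)$. If $d=0$ there is nothing to prove; otherwise the exactness of ${\mathbf A}$ provides an exact sequence $\cdots\to A_1\to A_0\to M\to0$, and setting $L_0=\ker(A_0\to M)$ and $L_i=\ker(A_i\to A_{i-1})$ gives short exact sequences $0\to L_0\to A_0\to M\to0$ and $0\to L_i\to A_i\to L_{i-1}\to0$ with all $A_i$ $n$-FP-gr-injective. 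Fixing a special gr-presented $K_{n-1}$ and using ${\rm EXT}_R^{t}(K_{n-1},A_i)=0$ for all $t\ge1$ from the first paragraph, each short exact sequence raises the ${\rm EXT}$-degree by one in the long exact sequence, so ${\rm EXT}_R^1(K_{n-1},M)\cong{\rm EXT}_R^{m+1}(K_{n-1},L_{m-1})$ for every $m\ge1$. Taking $m=d$ and invoking ${\rm EXT}_R^{d+1}(K_{n-1},L_{d-1})=0$ gives ${\rm EXT}_R^1(K_{n-1},M)=0$ for every special gr-presented $K_{n-1}$, i.e.\ ${\rm EXT}_R^n(U,M)=0$ for every $n$-presented $U$, which is exactly the assertion that $M$ is $n$-FP-gr-injective.

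For part (2), I would run the dual argument on the \emph{right} half $0\to M\to F^0\to F^1\to\cdots$ of the defining complex of a Gorenstein $n$-gr-flat $M$, whose terms $F^i$ are $n$-gr-flat: with $C^0=M$ and $C^{i+1}={\rm Coker}(C^i\to F^i)$ one has short exact sequences $0\to C^i\to F^i\to C^{i+1}\to0$, and the vanishing ${\rm Tor}_t^R(F^i,K_{n-1})=0$ for all $t\ge1$ forces ${\rm Tor}_1^R(M,K_{n-1})\cong{\rm Tor}_{m+1}^R(C^m,K_{n-1})$ for every $m\ge1$; with $m=d=r.n$-$\mathrm{gr}$-$\dim(R)$ this is $0$, hence ${\rm Tor}_1^R(M,K_{n-1})=0$ for every special gr-presented $K_{n-1}$ and $M$ is $n$-gr-flat. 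The point I expect to be the genuine obstacle is the preparatory fact of the first paragraph: the whole scheme depends on knowing that over an $n$-gr-coherent ring $n$-presented modules stay $n$-presented under syzygies, because this is precisely what promotes the restricted ${\rm EXT}$/${\rm Tor}$ vanishing of Lemma \ref{lem1} to vanishing against \emph{all} special gr-presented modules and thereby legitimizes truncating the dimension shift; the remaining manipulations are routine long-exact-sequence chases.
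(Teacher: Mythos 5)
Your proof is correct and follows essentially the same route as the paper: truncate the left (resp.\ right) half of the defining complex at length $d$, use $n$-gr-coherence to get ${\rm EXT}_R^{t}(K_{n-1},A_i)=0$ (resp.\ ${\rm Tor}_t^R(F^i,K_{n-1})=0$) for all $t\geq1$, dimension-shift ${\rm EXT}_R^1(K_{n-1},M)$ (resp.\ ${\rm Tor}_1^R(M,K_{n-1})$) up to degree $d+1$ against the $d$-th kernel (resp.\ cokernel), and kill it with the global dimension bound. Your explicit justification of the unrestricted vanishing via stability of $n$-presentedness under syzygies is a detail the paper merely asserts, but the argument is the same.
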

\begin{proof}
(1) Let $l.n$-${\rm FP}$-{\rm gr}-${\rm dim}(R)\leq k$. If $M$ is a Gorenstein $n$-FP-gr-injective module in $R$-gr, then there exists an exact sequence
$$0 \longrightarrow N\longrightarrow A_{k-1}\longrightarrow A_{k-2}\longrightarrow\cdots \longrightarrow A_{0}\longrightarrow M\longrightarrow 0$$
in $R$-gr, where every $A_{i}$ is $n$-FP-gr-injective for any $0\leq i \leq k-1$.  Since $R$ is $n$-gr-coherent for any $t\geq 1$, ${\rm EXT}_{R}^{t}( K_{n-1}, A_{i}) = 0$ for all special gr-presented left modules $K_{n-1}$ with respect to every $n$-presented module $U$ in $R$-g. Let $L_i={\ker}(A_{i}\rightarrow A_{i-1})$. Then, we have
$${\rm EXT}_{R}^{k+1}( K_{n-1},N)\cong{\rm EXT}_{R}^{k}(K_{n-1},L_{k-2})\cong\cdots\cong{\rm EXT}_{R}^{2}(K_{n-1},L_{0})\cong{\rm EXT}_{R}^{1}(K_{n-1},M).$$
Since $n$-{\rm FP}-{\rm gr}-${\rm id}_{R}(N)\leq k$, then $0={\rm EXT}_{R}^{k+1}( K_{n-1},N)\cong{\rm EXT}_{R}^{1}(K_{n-1},M)\cong{\rm EXT}_{R}^{1}(U,M)$ and consequently $M$ is $n$-FP-gr-injective.

(2) The proof is similar to that of (1).
\end{proof}

\section{Covers and Preenvelopes by Gorenstein graded Modules}
\ \ For a graded ring $R$, let $\F$ be a class of graded left $R$-modules and $M$ a graded
left $R$-module. Following \cite{MJJ, NG}, we say that a graded morphism $f : F\rightarrow M$ is an
$\F$-precover of $M$ if $F\in\F$ and ${\rm Hom}_{R-{\rm gr}}(F^{'}, F) \rightarrow {\rm Hom}_{R-{\rm gr}}(F^{'},M)\rightarrow 0$ is exact
for all $F^{'}\in\F$. Moreover, if whenever a graded morphism $g : F\rightarrow F$ such that
$fg = f$ is an automorphism of $F$, then $f : F\rightarrow M$ is called an $\F$-cover of $M$. The
class $\F$ is called (pre)covering,  if each object in $R$-gr has an $\F$-(pre)cover. Dually,
the notions of $\F$-preenvelopes, $\F$-envelopes and (pre)enveloping are defined.

In this section, by using of duality pairs on $n$-gr-coherent rings, we show that the classes  $\mathcal{G}_{gr-\mathcal{FI}_n}$ (resp. $\mathcal{G}_{gr-\mathcal{F}_n}$) or other signs are covering
and preenveloping.

\begin{Def}[The graded version of Definition 2.1 of \cite{HJ}]\label{NH}
Let  $R$ be a graded ring. Then, a duality pair over $R$ is a pair $(\M, \mathcal{C})$, where $\M$
is a class of graded left (respectively, right) $R$-modules and $\mathcal{C}$ is a class of graded
right (respectively, left) $R$-modules, subject to the following conditions:
\begin{enumerate}
\item [\rm (1)]
For any graded module $M$, one has $M\in\M$ if and only if $M^{*} \in\mathcal{C}$.
\item [\rm (2)]
$\mathcal{C}$ is closed under direct summands and finite direct sums.
\end{enumerate}
A duality pair $(\M, \mathcal{C})$ is called (co)product-closed,  if the class of $\M$ is closed
under graded direct (co)products, and a duality pair $(\M, \mathcal{C})$ is called perfect,  if it is
coproduct-closed, $\M$ is closed under extensions and $R$ belongs to $\M$.
\end{Def}

\begin{prop}\label{pairy}
Let  $R$ be a left $n$-gr-coherent ring. Then, the pair $(\mathcal{G}_{gr-\mathcal{FI}_n}, \mathcal{G}_{gr-\mathcal{F}_n})$ is a duality pair.
\end{prop}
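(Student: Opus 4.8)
The plan is to verify directly the two defining conditions of a duality pair from Definition \ref{NH}, in which here $\M = \mathcal{G}_{gr-\mathcal{FI}_n}$ is a class of graded left $R$-modules and $\mathcal{C} = \mathcal{G}_{gr-\mathcal{F}_n}$ is a class of graded right $R$-modules. Both conditions will follow from the structural results already established for $n$-gr-coherent rings, so the argument is essentially bookkeeping; there is no genuine obstacle, and the only point requiring attention is to make sure the $n$-gr-coherence hypothesis is invoked exactly where it is needed.

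First I would check condition (1): for an arbitrary graded left $R$-module $M$ we must show $M \in \mathcal{G}_{gr-\mathcal{FI}_n}$ if and only if its graded character module $M^{*}$ (a graded right $R$-module) lies in $\mathcal{G}_{gr-\mathcal{F}_n}$. This is precisely the statement of Theorem \ref{thm2}(1), which is available because $R$ is assumed left $n$-gr-coherent; symmetrically, Theorem \ref{thm2}(2) records the corresponding equivalence starting from a graded right module. Hence condition (1) holds with no further work.

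Next, condition (2) requires that the class $\mathcal{G}_{gr-\mathcal{F}_n}$ be closed under direct summands and under finite direct sums. Closure under arbitrary direct sums, hence in particular under finite direct sums, is exactly Proposition \ref{prod}(2). For closure under direct summands, I would first recall, as noted just before Theorem \ref{thm2}, that a left $n$-gr-coherent ring satisfies the hypothesis ``every $n$-presented module in $R$-gr with gr-${\fd}_{R}(U)<\infty$ is $(n+1)$-presented''; under that hypothesis Proposition \ref{resol2}(4) gives that $\mathcal{G}_{gr-\mathcal{F}_n}$ is closed under direct summands. With both conditions of Definition \ref{NH} verified, we conclude that $(\mathcal{G}_{gr-\mathcal{FI}_n}, \mathcal{G}_{gr-\mathcal{F}_n})$ is a duality pair over $R$.
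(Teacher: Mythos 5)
Your proposal is correct and follows essentially the same route as the paper: condition (1) of Definition \ref{NH} via Theorem \ref{thm2}(1), closure under finite direct sums via Proposition \ref{prod}(2), and closure under direct summands via Proposition \ref{resol2}(4). Your extra remark that left $n$-gr-coherence supplies the hypothesis needed for Proposition \ref{resol2} is a point the paper states just before Theorem \ref{thm2} and leaves implicit in its proof, so nothing is missing.
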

\begin{proof}
Let $M$ be an $R$-module in $R$-gr. Then by Theorem \ref{thm2}(1), $M\in\mathcal{G}_{gr-\mathcal{FI}_n}$ if and only if $M^{*}\in\mathcal{G}_{gr-\mathcal{F}_n}$. By Proposition \ref{prod}(2), any finite direct sum of Gorenstein $n$-gr-flat modules is Gorenstein $n$-gr-flat. Also, by Proposition \ref{resol2}(4), $\mathcal{G}_{gr-\mathcal{F}_n}$ is closed under direct summands. So, by Definition \ref{NH}, the pair $(\mathcal{G}_{gr-\mathcal{FI}_n}, \mathcal{G}_{gr-\mathcal{F}_n})$ is a duality pair.
\end{proof}

\begin{prop}\label{pair}
Let  $R$ be a left $n$-gr-coherent ring. Then, the pair $(\mathcal{G}_{gr-\mathcal{F}_n}, \mathcal{G}_{gr-\mathcal{FI}_n})$ is a duality pair.
\end{prop}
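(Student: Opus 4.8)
The plan is to mimic the proof of Proposition \ref{pairy} almost verbatim, just swapping the roles of the two classes. Recall that Definition \ref{NH} requires, for a pair $(\M,\mathcal{C})$ with $\M\subseteq$ gr-$R$ and $\mathcal{C}\subseteq R$-gr, that (1) a graded right module $M$ lies in $\M$ iff $M^{*}\in\mathcal{C}$, and (2) $\mathcal{C}$ is closed under direct summands and finite direct sums. So here $\M=\mathcal{G}_{gr-\mathcal{F}_n}$ (graded right modules) and $\mathcal{C}=\mathcal{G}_{gr-\mathcal{FI}_n}$ (graded left modules), and I must verify these two conditions.

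First I would establish condition (1): for $M$ in gr-$R$, $M\in\mathcal{G}_{gr-\mathcal{F}_n}$ if and only if $M^{*}\in\mathcal{G}_{gr-\mathcal{FI}_n}$. This is exactly the content of Theorem \ref{thm2}(2), which applies since $R$ is left $n$-gr-coherent. Then for condition (2), I need that the class $\mathcal{G}_{gr-\mathcal{FI}_n}$ of Gorenstein $n$-FP-gr-injective modules in $R$-gr is closed under finite direct sums and under direct summands. Closure under finite direct sums follows from Proposition \ref{prod}(1), which gives closure under arbitrary direct products (in particular finite ones, which coincide with finite direct sums). Closure under direct summands follows from Proposition \ref{resol2}(2); note that Proposition \ref{resol2} has the standing hypothesis that every $n$-presented module in $R$-gr with gr-$\fd_{R}(U)<\infty$ is $(n+1)$-presented, but this holds automatically because $R$ is left $n$-gr-coherent (as remarked just before Theorem \ref{thm2}). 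Assembling these, Definition \ref{NH} is satisfied and $(\mathcal{G}_{gr-\mathcal{F}_n},\mathcal{G}_{gr-\mathcal{FI}_n})$ is a duality pair.

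I do not anticipate any real obstacle here: the statement is the ``mirror'' of Proposition \ref{pairy} and every ingredient has already been proved. The only point requiring a little care is bookkeeping about which side (left versus right, $R$-gr versus gr-$R$) each class lives on, and making sure that the hypotheses of the cited results (especially the $(n+1)$-presented condition feeding Proposition \ref{resol2}) are genuinely available from left $n$-gr-coherence. I would write the proof in three short sentences citing Theorem \ref{thm2}(2), Proposition \ref{prod}(1), and Proposition \ref{resol2}(2), then conclude via Definition \ref{NH}.

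\begin{proof}
Let $M$ be a graded module in gr-$R$. Then by Theorem \ref{thm2}(2), $M\in\mathcal{G}_{gr-\mathcal{F}_n}$ if and only if $M^{*}\in\mathcal{G}_{gr-\mathcal{FI}_n}$. Since $R$ is left $n$-gr-coherent, every $n$-presented module in $R$-gr with gr-${\rm fd}_{R}(U)<\infty$ is $(n+1)$-presented, so Proposition \ref{resol2}(2) applies and $\mathcal{G}_{gr-\mathcal{FI}_n}$ is closed under direct summands. Moreover, by Proposition \ref{prod}(1), $\mathcal{G}_{gr-\mathcal{FI}_n}$ is closed under direct products, hence in particular under finite direct sums. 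Therefore, by Definition \ref{NH}, the pair $(\mathcal{G}_{gr-\mathcal{F}_n}, \mathcal{G}_{gr-\mathcal{FI}_n})$ is a duality pair.
\end{proof}
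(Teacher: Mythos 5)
Your proof is correct and is essentially identical to the paper's own argument: both verify condition (1) of Definition \ref{NH} via Theorem \ref{thm2}(2), and condition (2) via Proposition \ref{prod}(1) (finite direct sums, as special cases of direct products) together with Proposition \ref{resol2}(2) (direct summands). Your extra remark that left $n$-gr-coherence supplies the $(n+1)$-presented hypothesis needed for Proposition \ref{resol2} is a worthwhile bookkeeping point that the paper leaves implicit.
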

\begin{proof}
Let $M$ be an $R$-module  in gr-$R$. Then by Theorem \ref{thm2}(2), $M\in\mathcal{G}_{gr-\mathcal{F}_n}$ if and only if $M^{*}\in\mathcal{G}_{gr-\mathcal{FI}_n}$. By Proposition \ref{prod}(1), any finite direct sum of Gorenstein $n$-gr-FP-injective modules is Gorenstein $n$-FP-gr-injective and  by Proposition \ref{resol2}(2), $\mathcal{G}_{gr-\mathcal{FI}_n}$ is closed under direct summands. So, by Definition \ref{NH}, the pair $(\mathcal{G}_{gr-\mathcal{F}_n}, \mathcal{G}_{gr-\mathcal{FI}_n})$ is a duality pair.
\end{proof}

\begin{thm}\label{thm3}
Let  $R$ be a left $n$-gr-coherent ring. Then:
\begin{enumerate}
\item [\rm (1)]
 The class $\mathcal{G}_{gr-\mathcal{FI}_n}$ is covering and preenveloping.
 \item [\rm (2)]
 The class $\mathcal{G}_{gr-\mathcal{F}_n}$ is covering and preenveloping.
 \end{enumerate}
\end{thm}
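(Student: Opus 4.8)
The plan is to obtain Theorem \ref{thm3} as a formal consequence of the graded analogue of Holm--J{\o}rgensen's theory of duality pairs (the graded form of \cite[Theorem 3.1]{HJ}), since Propositions \ref{pairy} and \ref{pair} have already identified the relevant duality pairs. Concretely, I would first record the two implications that we need: for a duality pair $(\M,\mathcal{C})$ over a graded ring, if the pair is product-closed then $\M$ is preenveloping, and if it is coproduct-closed then $\M$ is covering. The reason these hold is that condition (1) of Definition \ref{NH}, combined with the fact that gr-purity in $R$-gr is detected by the graded character module $(-)^{*}$, forces $\M$ to be closed under gr-pure submodules, gr-pure quotients and gr-pure extensions; one then runs the usual smallness/set-theoretic argument inside the locally finitely presented Grothendieck category $R$-gr to produce the precovers (resp. preenvelopes), and upgrades precovers to covers via closure of $\M$ under direct limits (which follows from coproduct-closedness together with closure under gr-pure quotients).

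Next I would dispatch part (1). By Proposition \ref{pairy}, $(\mathcal{G}_{gr-\mathcal{FI}_n},\mathcal{G}_{gr-\mathcal{F}_n})$ is a duality pair. By Proposition \ref{prod}(1) the class $\mathcal{G}_{gr-\mathcal{FI}_n}$ is closed under direct products, so this pair is product-closed and hence $\mathcal{G}_{gr-\mathcal{FI}_n}$ is preenveloping. For the covering assertion it remains to see that the pair is coproduct-closed, i.e. that $\mathcal{G}_{gr-\mathcal{FI}_n}$ is closed under arbitrary direct sums; but any direct sum $\bigoplus_{i\in I}M_i$ of Gorenstein $n$-FP-gr-injective modules equals $\varinjlim_{F}\bigl(\bigoplus_{i\in F}M_i\bigr)$, the filtered colimit over the finite subsets $F\subseteq I$, each finite partial sum being Gorenstein $n$-FP-gr-injective by iterated application of Proposition \ref{prop1}(1), so it lies in $\mathcal{G}_{gr-\mathcal{FI}_n}$ by Proposition \ref{limit}(1). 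Thus $\mathcal{G}_{gr-\mathcal{FI}_n}$ is covering. Part (2) is even shorter: by Proposition \ref{pair}, $(\mathcal{G}_{gr-\mathcal{F}_n},\mathcal{G}_{gr-\mathcal{FI}_n})$ is a duality pair; it is product-closed by Proposition \ref{limit}(2) and coproduct-closed by Proposition \ref{prod}(2), so $\mathcal{G}_{gr-\mathcal{F}_n}$ is simultaneously preenveloping and covering.

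The part requiring genuine care is the first paragraph, not the two verifications: one must make sure that the Holm--J{\o}rgensen machinery transfers verbatim to the graded setting. In particular, ``pure'' must be read as ``gr-pure'' throughout (a gr-pure sequence in $R$-gr need not be pure as a sequence of ungraded modules), condition (1) of Definition \ref{NH} must be shown to yield closure of the left class under gr-pure sub\-/quotient\-/extensions, and the cardinality bounds underlying the existence of precovers and preenvelopes must be checked in $R$-gr. These all go through because $R$-gr is a Grothendieck category with a set of finitely presented generators and enough graded projectives, and because the suspension functors $T_\sigma$ together with the adjunction between the forgetful functor and $F$ let one reduce graded statements to their ungraded counterparts where needed; but writing this out is where the real work of the section lies, after which the statement of Theorem \ref{thm3} is immediate.
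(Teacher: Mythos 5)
Your proposal is correct and follows essentially the same route as the paper: the paper likewise combines the duality pairs of Propositions \ref{pairy} and \ref{pair} with the closure properties from Propositions \ref{prod} and \ref{limit} and Corollaries \ref{cor1} and \ref{cor2}, and then invokes the graded Holm--J{\o}rgensen duality-pair theorem (cited there as Theorem 4.2 of \cite{NG}) to conclude covering and preenveloping. The only cosmetic differences are that you re-derive the gr-pure closure from the duality-pair condition and spell out coproduct-closedness via finite direct sums plus direct limits, whereas the paper quotes Corollaries \ref{cor1}--\ref{cor2} and Proposition \ref{limit} directly.
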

\begin{proof}
(1)  Every direct limit of Gorenstein $n$-FP-gr-injective modules  and every direct product  of Gorenstein $n$-FP-gr-injective modules in $R$-gr are Gorenstein $n$-FP-gr-injective by Propositions \ref{limit}(1) and \ref{prod}(1), respectively.  Also, by Corollary \ref{cor1}, the class of Gorenstein $n$-FP-gr-injective modules in $R$-gr is closed under gr-pure submodules, gr-pure quotients and gr-pure extensions. So, by Proposition \ref{pairy} and \cite[Theorem 4.2]{NG}, we deduce that  every $R$-module in $R$-gr
has a Gorenstein $n$-FP-gr-injective cover and a Gorenstein $n$-FP-gr-injective preenvelope.

(2) Every direct sum of Gorenstein $n$-gr-flat modules  and every direct product  of Gorenstein $n$-gr-flat modules in gr-$R$ are Gorenstein $n$-gr-flat by Propositions \ref{prod}(2) and \ref{limit}(2), respectively.  Also, by Corollary \ref{cor2}, the class of Gorenstein $n$-gr-flat modules in gr-$R$ is closed under gr-pure submodules, gr-pure quotients and gr-pure extensions. So, by Proposition \ref{pair} and \cite[Theorem 4.2]{NG}, we deduce that  every $R$-module in gr-$R$
has a Gorenstein $n$-gr-flat cover and a Gorenstein $n$-gr-flat preenvelope.
\end{proof}

Now we give some equivalent characterizations for $_RR$ being Gorenstein $n$-FP-gr-injective
in terms of the properties of Gorenstein $n$-FP-gr-injective and Gorenstein $n$-gr-flat modules.
\begin{thm}\label{thm4}
Let  $R$ be a left $n$-gr-coherent ring. Then,  the following statements are equivalent:
\begin{enumerate}
\item [\rm (1)]
$_RR$ is Gorenstein $n$-FP-gr-injective;
\item [\rm (2)]
Every graded module in gr-$R$ has a monic Gorenstein $n$-gr-flat preenvelope;
\item [\rm (3)]
Every gr-injective module in gr-$R$ is Gorenstein $n$-gr-flat;
\item [\rm (4)]
 Every $n$-FP-gr-injective module in gr-$R$ is Gorenstein $n$-gr-flat;
 \item [\rm (5)]
Every flat module in $R$-gr is Gorenstein $n$-FP-gr-injective;
\item [\rm (6)]
Every graded module in $R$-gr has an epic Gorenstein $n$-FP-gr-injective cover.

Moreover,  if $l.n$-${\rm FP}$-{\rm gr}-${\rm dim}(R)<\infty$, then the above conditions are also equivalent to:
\item [\rm (7)]
Every Gorenstein gr-flat module in $R$-gr is Gorenstein $n$-FP-gr-injective;
\item [\rm (8)]
Every graded module in $R$-gr is Gorenstein $n$-FP-gr-injective;
\item [\rm (9)]
Every Gorenstein gr-injective module in gr-$R$ is Gorenstein $n$-gr-flat.
\end{enumerate}
\end{thm}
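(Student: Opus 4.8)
The plan is to prove a cycle of implications through (1)--(6) and then attach (7)--(9) under the extra hypothesis. A number of implications are purely formal: $(5)\Rightarrow(1)$ since $_RR$ is gr-free, hence gr-flat; $(4)\Rightarrow(3)$ and $(9)\Rightarrow(3)$ since a gr-injective module is both $n$-FP-gr-injective and Gorenstein gr-injective; $(8)\Rightarrow(7)\Rightarrow(5)$ since gr-flat modules are Gorenstein gr-flat, and $(8)\Rightarrow(1)$ trivially. So the work is concentrated in $(1)\Rightarrow(5)$, $(1)\Leftrightarrow(6)$, $(1)\Rightarrow(3)$, $(3)\Leftrightarrow(2)$, $(3)\Rightarrow(5)$, the implication that puts (4) into the cycle, and the finiteness-dependent part.

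\emph{The flat side.} For $(1)\Rightarrow(5)$ I would use that the $\sigma$-suspension functors are isomorphisms of categories and hence preserve $\mathcal{G}_{gr-\mathcal{FI}_n}$, so each $R(\sigma)$ is Gorenstein $n$-FP-gr-injective; finite direct sums stay in $\mathcal{G}_{gr-\mathcal{FI}_n}$ by Proposition \ref{prop1}(1), and over the $n$-gr-coherent ring $R$ the class is closed under direct limits by Proposition \ref{limit}(1); since every gr-flat module is a direct limit of gr-free modules, (5) follows. For $(1)\Leftrightarrow(6)$: from $(1)\Rightarrow(5)$ every gr-projective module is Gorenstein $n$-FP-gr-injective, so any epimorphism $P\to M$ from a gr-projective $P$ is an epic Gorenstein $n$-FP-gr-injective precover; it factors through the cover supplied by Theorem \ref{thm3}(1), which is therefore epic. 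Conversely an epic cover of $_RR$ splits, and $\mathcal{G}_{gr-\mathcal{FI}_n}$ is closed under direct summands (Proposition \ref{resol2}(2)), giving $(6)\Rightarrow(1)$.

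\emph{The injective side.} For $(1)\Rightarrow(3)$: by Theorem \ref{thm2}(1) the character module $R^{*}=(_RR)^{*}$ is Gorenstein $n$-gr-flat in gr-$R$; it is gr-injective, and since the suspensions of $R^{*}$ form a gr-injective cogenerator of gr-$R$, every gr-injective right module is a direct summand of a product of suspensions of $R^{*}$, hence Gorenstein $n$-gr-flat by Propositions \ref{limit}(2) and \ref{resol2}(4). For $(3)\Rightarrow(2)$: every $N\in$ gr-$R$ has a Gorenstein $n$-gr-flat preenvelope (Theorem \ref{thm3}(2)), and the inclusion of $N$ into its gr-injective envelope, which lies in $\mathcal{G}_{gr-\mathcal{F}_n}$ by (3), factors through it, forcing the preenvelope to be monic; $(2)\Rightarrow(3)$ is the dual splitting argument together with Proposition \ref{resol2}(4). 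For $(3)\Rightarrow(5)$: if $F$ is gr-flat in $R$-gr then $F^{*}$ is gr-injective in gr-$R$, so Gorenstein $n$-gr-flat by (3), so $F^{**}$ is Gorenstein $n$-FP-gr-injective by Theorem \ref{thm2}(2); since $F$ is gr-pure in $F^{**}$ (\cite[Proposition 2.3.5]{JX}) and $R$ is $n$-gr-coherent, so that the hypothesis of Corollary \ref{cor1} holds, $F$ is Gorenstein $n$-FP-gr-injective.

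\emph{Condition (4) and the finite-dimension part.} The single non-formal direction for (4) is $(5)\Rightarrow(4)$: given $N$ $n$-FP-gr-injective in gr-$R$, embed it in its gr-injective envelope $0\to N\to E\to C\to 0$, use (3) to place $E$ in $\mathcal{G}_{gr-\mathcal{F}_n}$, and transport Gorenstein $n$-gr-flatness down to $N$ by means of the character-module duality of Theorem \ref{thm2}, the relations between $n$-FP-gr-injective and $n$-gr-flat modules from \cite{NG}, gr-purity of $N$ in $N^{**}$, and Corollary \ref{cor2}; keeping straight the sides on which $n$-presentedness is invoked, and controlling the cokernel $C$, is the main obstacle of the whole proof. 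Finally, assuming $l.n$-${\rm FP}$-gr-${\rm dim}(R)<\infty$, Proposition \ref{2.2a}(1) promotes (1) to the statement that $_RR$ is $n$-FP-gr-injective; hence gr-projectives are $n$-FP-gr-injective, and splicing a gr-projective resolution of any $M\in R$-gr with a finite $n$-FP-gr-injective coresolution (which exists by the dimension hypothesis) produces a complete $n$-FP-gr-injective resolution, giving $(1)\Rightarrow(8)$. Then $(8)\Rightarrow(7)$ is immediate, and $(8)\Rightarrow(9)$ follows since for $N\in$ gr-$R$ the module $N^{*}$ is Gorenstein $n$-FP-gr-injective by (8), so $N^{**}$ is Gorenstein $n$-gr-flat by Theorem \ref{thm2}(1), whence $N$ is Gorenstein $n$-gr-flat (being gr-pure in $N^{**}$, Corollary \ref{cor2}); in particular every Gorenstein gr-injective right module is. This closes all the equivalences.
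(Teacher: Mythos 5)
Most of your cycle is sound, and in several places you take a genuinely different route from the paper: you prove $(1)\Rightarrow(5)$ directly by writing a gr-flat module as a direct limit of finitely generated gr-free modules and invoking closure of $\mathcal{G}_{gr-\mathcal{FI}_n}$ under suspensions, finite direct sums and (over an $n$-gr-coherent ring) direct limits, and you prove $(1)\Rightarrow(3)$ directly via the cogenerator $(_RR)^{*}$ and closure under products, suspensions and summands; the paper instead threads both through the chain $(1)\Rightarrow(2)\Rightarrow(3)\Rightarrow(4)\Rightarrow(5)$. Those shortcuts are legitimate and arguably cleaner. Your treatment of $(2)\Leftrightarrow(3)$, $(1)\Leftrightarrow(6)$, $(3)\Rightarrow(5)$ and the finite-dimension block $(1)\Rightarrow(8)\Rightarrow(7),(9)$ matches the paper's in substance.

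The genuine gap is exactly the step you yourself flag as ``the main obstacle'': the implication that brings (4) into the equivalence. You never give the argument; you only list tools (character-module duality, gr-purity of $N$ in $N^{**}$, Corollary \ref{cor2}) and concede that keeping the sides straight and controlling the cokernel is unresolved. Indeed the detour through $N^{**}$ is circular as sketched: to apply Corollary \ref{cor2} you would first need $N^{**}$ to be Gorenstein $n$-gr-flat, which by Theorem \ref{thm2} sends you back to a statement of the same kind, and the dualities of \cite{NG} are calibrated for \emph{left} $n$-presented modules, so they do not directly handle an $n$-FP-gr-injective \emph{right} module. The missing idea is \cite[Proposition 3.10]{NG}: a module $M$ in gr-$R$ is $n$-FP-gr-injective precisely when $0\rightarrow M\rightarrow E^{g}(M)\rightarrow E^{g}(M)/M\rightarrow 0$ is (special) gr-pure. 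Granting (3), $E^{g}(M)$ is Gorenstein $n$-gr-flat, and Corollary \ref{cor2} (gr-pure submodules of Gorenstein $n$-gr-flat modules are Gorenstein $n$-gr-flat, available since $R$ is $n$-gr-coherent) immediately yields that $M$ is Gorenstein $n$-gr-flat; no character modules are needed and the cokernel requires no control. This is the paper's $(3)\Rightarrow(4)$, a two-line argument once the purity characterization is in hand. A secondary, lighter point: in $(1)\Rightarrow(8)$ you assert that splicing a gr-flat resolution with a gr-injective coresolution ``produces a complete $n$-FP-gr-injective resolution'' without verifying that ${\rm HOM}_{R}(K_{n-1},-)$ leaves the spliced complex exact for special gr-presented $K_{n-1}$ of finite gr-projective dimension; the paper settles this by induction on gr-${\rm pd}_{R}(K_{n-1})$ as in Corollary \ref{2.vc}, and your write-up should do the same.
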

\begin{proof}
$(8)\Longrightarrow (7)$, $(7)\Longrightarrow (5)$ and $(9)\Longrightarrow (3)$ are obvious.

$(1)\Longrightarrow (2)$ By Theorem \ref{thm3}(2), every module $M$ in gr-$R$ has a Gorenstein $n$-gr-flat preenvelope $f: M\rightarrow F$. By Theorem \ref{thm2}(1), $R^*$ is Gorenstein $n$-gr-flat in gr-$R$, and so $\prod_{i\in I}^{{\rm gr}-R} R^{*}$ is Gorenstein $n$-gr-flat by Proposition \ref{limit}. On the other hand, $(_RR)^*$ is a cogenerator in gr-$R$. Therefore,
exact sequence of the form  $0\rightarrow M\stackrel{\displaystyle g}\rightarrow \prod_{i\in I}^{{\rm gr}-R} R^{*}$ exists, and hence  homomorphism $ 0\rightarrow F\stackrel{\displaystyle h}\rightarrow \prod_{i\in I}^{{\rm gr}-R} R^{*}$ such that $hf=g$  shows that $f$ is monic.

$(2)\Longrightarrow (3)$ Let $E$ be a gr-injective module in gr-$R$. then $E$ has a monic Gorenstein $n$-gr-flat preenvelope $f: E\rightarrow F$ by assumption. So, the split exact sequence $0\rightarrow E\rightarrow F\rightarrow\frac{F}{E}\rightarrow 0$ exists, and so $E$ is direct summand of $F$. Hence, by Proposition \ref{resol2}, $E$  is Gorenstein $n$-gr-flat.

$(3)\Longrightarrow (1)$ By (3), $R^*$ is  Gorenstein $n$-gr-flat in gr-$R$, since $R^*$ is gr-injective. So, $R$ is Gorenstein $n$-FP-gr-injective  in $R$-gr by Theorem \ref{thm2}(1).

$(3)\Longrightarrow (4)$ Let $M$ be an $n$-FP-gr-injective module in gr-$R$ . Then by \cite[Proposition 3.10]{NG},
the exact sequence $0\rightarrow M\rightarrow E^{g}(M)\rightarrow\frac{E^{g}(M)}{M}\rightarrow 0$ is special gr-pure.
Since by (3), $E^g(M)$ is Gorenstein $n$-gr-flat, from Corollary \ref{cor2}, we deduce that $M$ is Gorenstein $n$-gr-flat.

$(4)\Longrightarrow (5)$
Let $F$ be a flat module in $R$-gr. Then, $F^*$ is gr-injective in gr-$R$, so $F^*$ is Gorenstein $n$-gr-flat by (4),
   and hence $F$ is Gorenstein $n$-FP-gr-injective by Theorem \ref{thm2}(1).

$(5)\Longrightarrow (6)$
By Theorem \ref{thm3}(1), every module $M$ in $R$-gr  has a Gorenstein $n$-FP-gr-injective cover $f:A\rightarrow M$. On the other hand, there exists an exact sequence $\bigoplus_{\gamma\in S}R(\gamma)\rightarrow M\rightarrow 0$ for some $S\subseteq G$. Since $R(\gamma)$ is Gorenstein $n$-FP-gr-injective by assumption, we have that $\bigoplus_{\gamma\in S} R(\gamma)$ is Gorenstein $n$-FP-gr-injective by Proposition \ref{limit}. Thus $f$ is an epimorphism.

$(6)\Longrightarrow (1)$ By hypothesis, $R$  has an epic Gorenstein $n$-FP-gr-injective cover $f:D\rightarrow R$,
   then we have a split exact sequence $0\rightarrow {\rm Ker}f \rightarrow D\rightarrow R\rightarrow 0$ with $D$ is a
Gorenstein $n$-FP-gr-injective module in $R$-gr. So, by Proposition \ref{resol2}, $R$ is Gorenstein $n$-FP-gr-injective in $R$-gr.

$(1)\Longrightarrow (8)$  Let $M$ be a graded left $R$-module. Then, there is an exact sequence
$\cdots\rightarrow F_1 \rightarrow F_{0} \rightarrow M \rightarrow 0 $ in $R$-gr, where each $F_i$ is gr-flat. If $R$ is a  Gorenstein $n$-FP-gr-injective module in $R$-gr, then by Proposition \ref{2.2a}(1), $R$ is  $n$-FP-gr-injective. Hence, by \cite[Theorem 4.8]{NG}, we deduce that every $F_i$ is $n$-FP-gr-injective. Also, for module $M$, there is an exact sequence $0\rightarrow M\rightarrow E_{0}\rightarrow E_{1} \rightarrow\cdots 0$ in $R$-gr, where every $E_i$ is gr-injective. So, we have:
$$\cdots\longrightarrow F_{1}\longrightarrow F_{0}\longrightarrow E_{0}\longrightarrow E_{1} \longrightarrow\cdots,$$
where $F_i$ and $E_i$ are $n$-FP-gr-injective and $M={\rm ker}(E_{0}\rightarrow E_{1})$. Thus, similar to the proof  $(2)\Longrightarrow(1)$ of Corllary \ref{2.vc}, we get that  $M$ is Gorenstein $n$-FP-gr-injective. 

$(8)\Longrightarrow (9)$ If $M$ is a Gorenstein gr-injective module in gr-$R$, then $M^*$ is in $R$-gr. So by hypothesis, $M^*$ is Gorenstein $n$-FP-gr-injective, and hence by Theorem \ref{thm2}, it follows that $M$ is Gorenstein $n$-gr-flat.
\end{proof}
\begin{ex}
  Let $R$ be a commutative, Gorenstein Noetherian, complete, local ring, $\mathfrak{m}$ its maximal ideal. Let
 $E=E(R/\mathfrak{m})$  be the $R$-injective hull of the residue field $R/\mathfrak{m}$ of $R$. By \cite[Theorem A]{Roos},
$\lambda$-$dim (R\ltimes E)= \dim R$, where $dim R$ is the Krull dimension of $R$. We suppose that $\dim R=n$, then
$(R\ltimes E)$ is $n$-gr-coherent. And if we take in \cite[Theorem 4.2]{EMH} $n=1\:\: and\:\: B=\lbrace 0\rbrace$, we get
${\rm Hom}_{R}(E,E)=R$. Then, by \cite[Corollary 4.37]{EF}, $(R\ltimes E)$ is self gr-injective which implies that $(R\ltimes E)$ is a left $n$-FP-gr-injective module over itself. Hence, $R\ltimes E$ is $n$-$FC$ graded ring ($n$-gr-coherent and $n$-FP-gr-injective), and then by Remark \ref{21}, $(R\ltimes E)$ is Gorenstein $n$-FP-gr-injective. 
  For example, the ring $R=K[[X_1,...,X_n]]$ of formal power series in $n$ variables over a field $K$ which is commutative,  Gorenstein Noetherian, complete, local ring, with $\mathfrak{m}= (X_1,...,X_n)$ its maximal ideal. We obtain
$\lambda$-$dim (R\ltimes E(R/\mathfrak{m}))=n$, that is, $R\ltimes E(R/\mathfrak{m})$ is $n$-gr-coherent ring. So according to the above
$R\ltimes E(R/\mathfrak{m})$ is $n$-$FC$ graded ring. So, every left $R\ltimes E(R/\mathfrak{m})$-module is Gorenstein
$n$-FP-gr-injective.
\end{ex}
\begin{prop}\label{pair3}
Let  $R$ be a left $n$-gr-coherent. Then,
$(\mathcal{G}_{gr-\mathcal{F}_n}, (\mathcal{G}_{gr-\mathcal{F}_n})^{\bot})$ is hereditary perfect cotorsion
pair.
\end{prop}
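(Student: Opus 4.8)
The plan is to realize $(\mathcal{G}_{gr-\mathcal{F}_n}, (\mathcal{G}_{gr-\mathcal{F}_n})^{\bot})$ as the cotorsion pair induced by a perfect duality pair, and then to upgrade it to a hereditary one using the resolving behaviour of $\mathcal{G}_{gr-\mathcal{F}_n}$ over a left $n$-gr-coherent ring.

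First I would verify that the duality pair $(\mathcal{G}_{gr-\mathcal{F}_n}, \mathcal{G}_{gr-\mathcal{FI}_n})$ produced in Proposition \ref{pair} is \emph{perfect} in the sense of Definition \ref{NH}. It is coproduct-closed by Proposition \ref{prod}(2). The class $\mathcal{G}_{gr-\mathcal{F}_n}$ is closed under extensions: since $R$ is left $n$-gr-coherent, Proposition \ref{resol2}(3) applies, so $\mathcal{G}_{gr-\mathcal{F}_n}$ is gr-projectively resolving, whence for every short exact sequence $0\rightarrow A\rightarrow B\rightarrow C\rightarrow 0$ in gr-$R$ with $A,C\in\mathcal{G}_{gr-\mathcal{F}_n}$ one gets $B\in\mathcal{G}_{gr-\mathcal{F}_n}$. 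Finally $R\in\mathcal{G}_{gr-\mathcal{F}_n}$, because $R$ is gr-flat, hence $n$-gr-flat, hence Gorenstein $n$-gr-flat by Remark \ref{21}.

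Next I would apply the graded version of the theorem of Holm and J\o rgensen on duality pairs (\cite[Theorem 4.2]{NG}, cf. \cite{HJ}): from a perfect duality pair $(\M,\mathcal{C})$ one obtains that $(\M,\M^{\bot})$ is a perfect cotorsion pair. The closure hypotheses needed for that conclusion are already available for $\mathcal{G}_{gr-\mathcal{F}_n}$, namely closure under gr-pure submodules, gr-pure quotients and gr-pure extensions by Corollary \ref{cor2}, under direct sums by Proposition \ref{prod}(2), and under direct summands by Proposition \ref{resol2}(4). Applying this to the pair of the previous step yields that $(\mathcal{G}_{gr-\mathcal{F}_n}, (\mathcal{G}_{gr-\mathcal{F}_n})^{\bot})$ is a perfect cotorsion pair.

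It then remains to check heredity. Since $R$ is left $n$-gr-coherent, every $n$-presented module in $R$-gr with gr-${\rm fd}_{R}(U)<\infty$ is $(n+1)$-presented, so Proposition \ref{prop23} gives, for every exact sequence $0\rightarrow A\rightarrow B\rightarrow C\rightarrow 0$ in gr-$R$ with $B,C\in\mathcal{G}_{gr-\mathcal{F}_n}$, that $A\in\mathcal{G}_{gr-\mathcal{F}_n}$; that is, the left-hand class is closed under kernels of epimorphisms. By the usual dimension-shifting argument, a cotorsion pair whose left class is resolving is hereditary, and this completes the proof. I expect the main obstacle to be the perfectness (completeness) statement extracted from the duality pair, which rests on the graded Kaplansky-class machinery --- building a generating set for $\mathcal{G}_{gr-\mathcal{F}_n}$ out of special gr-pure exact sequences; by contrast, once Proposition \ref{prop23} is invoked the heredity step is immediate.
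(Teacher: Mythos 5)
Your argument is correct and follows essentially the same route as the paper: realize $(\mathcal{G}_{gr-\mathcal{F}_n}, \mathcal{G}_{gr-\mathcal{FI}_n})$ as a perfect duality pair, invoke the graded duality-pair theorem \cite[Theorem 4.2]{NG} to obtain a perfect cotorsion pair, and use Proposition \ref{prop23} for heredity. If anything, your justification of closure under extensions via Proposition \ref{resol2}(3) (equivalently Proposition \ref{prop1}(2)) is slightly cleaner than the paper's appeal to closure under gr-pure extensions only.
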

\begin{proof}
 Let $\mathcal{G}_{gr-\mathcal{F}_n}$ be a class of Gorenstein $n$-gr-flat modules in gr-$R$. Then, by Corolary \ref{cor2}, $\mathcal{G}_{gr-\mathcal{F}_n}$ is closed under gr-pure submodules, gr-pure
quotients and gr-pure extensions. On the other hand, $R\in\mathcal{G}_{gr-\mathcal{F}_n}$ by Remark \ref{21}, and $\mathcal{G}_{gr-\mathcal{F}_n}$ is closed under graded direct sums by Proposition \ref{prod}. So, it follows that duality pair $(\mathcal{G}_{gr-\mathcal{F}_n}, \mathcal{G}_{gr-\mathcal{FI}_n})$ is perfect. Consequently by \cite[Theorem 4.2]{NG}, $(\mathcal{G}_{gr-\mathcal{F}_n}, (\mathcal{G}_{gr-\mathcal{F}_n})^{\bot})$ is perfect cotorsion
pair. Consider the short exact sequence $0\rightarrow A\rightarrow B\rightarrow C\rightarrow 0$ in gr-$R$, where $B$ and $C$ are Gorenstein $n$-gr-flat. Then, by Proposition \ref{prop23}, $A$ is Gorenstein $n$-gr-flat and hence perfect cotorsion
pair $(\mathcal{G}_{gr-\mathcal{F}_n}, (\mathcal{G}_{gr-\mathcal{F}_n})^{\bot})$ is hereditary.
\end{proof}

\bigskip

\noindent\textbf{Acknowledgment.}  The authors
would like to thank the referee for the helpful suggestions and valuable comments. 
\bigskip

\end{document}